\documentclass[12pt]{siamart251104}

\usepackage[utf8]{inputenc}
\usepackage{amsmath, amssymb}
\usepackage{graphicx}
\usepackage{tikz-cd}
\usepackage{bm}
\usepackage{bbm}
\usepackage{mathtools}
\usepackage{dsfont}
\usepackage{algorithm}
\usepackage[noend]{algorithmic}
\usepackage{hyperref} 
\usepackage{stmaryrd}
\usepackage[OT1]{fontenc}

\newcommand{\R}{\mathbb{R}}
\newcommand{\N}{\mathbb{N}}

\newcommand{\Z}{\mathbb{Z}}
\newcommand{\eps}{\varepsilon}
\newcommand{\F}{\mathcal{F}}

\newcommand{\Ifine}{I_{\text{fine}}}
\newcommand{\Icoarse}{I_{\text{coarse}}}
\newcommand{\E}{\mathbbm{E}}

\makeatletter
\def\tableofcontents{%
  \noindent\textbf{\contentsname}\par\vspace{.25em}%
  \@starttoc{toc}%
}

\def\l@section#1#2{%
  \vspace{0.25em}\par\noindent
  \def\numberline##1{\makebox[1.5em][l]{##1}}%
  \textbf{#1}\hfill#2\par
}

\def\l@subsection#1#2{%
  \par\noindent\hspace*{1.5em}%
  \def\numberline##1{\makebox[2.5em][l]{##1}}%
  #1\dotfill#2\par
}

\def\l@subsubsection#1#2{%
  \par\noindent\hspace*{4em}%
  \def\numberline##1{\makebox[3.5em][l]{##1}}%
  #1\dotfill#2\par
}
\makeatother

\colorlet{siaminlinkcolor}{blue}

\headers{Random blob methods for diffusion}{Craig and Murphy}

\title{Random blob methods for diffusion}

\author{Katy Craig%
\thanks{UC Santa Barbara, kcraig@ucsb.edu} 
\and Claire Murphy%
\thanks{UC Santa Barbara, claire\_murphy@ucsb.edu}
}

\begin{document}

\maketitle

\begin{abstract}
Linear and nonlinear diffusion equations arise in a range of phenomena of mathematical interest, including slow and fast diffusion, sandpile dynamics, height-constrained transport, the two-dimensional Navier-Stokes equation, and dynamics for sampling probability measures.
 In recent years, blob methods have attracted significant interest as an approach for numerically simulating these types of PDEs.
To address the $O(N^2)$ computational bottleneck of blob methods, we consider stochastic discretizations of space and time.
We compare the random batch method \cite{Jin2019, Jinetal2021} with a new approach, which we call the random multirate method.
Both of these methods build on classical stochastic methods in the optimization literature, with many similarities to stochastic gradient descent and random coordinate descent.
We find that, for linear and nonlinear diffusion equations, in the tradeoff between computational complexity and accuracy, the random multirate method has the best performance. 
On one hand, we prove that the random multirate method converges to the underlying ODE system at a rate of $O(k \Delta t)$, matching forward Euler, and show by example that this rate is theoretically sharp.
On the other hand, we observe even better rates of convergence for the random multirate method when applied to ODEs arising from blob methods for diffusive PDEs.
Finally, due to its ability to simulate a wide range of nonlinear diffusion equations, including height-constrained transport and sandpile dynamics,  our method succeeds in capturing key features of  PDEs for which  few  numerical approaches exist.

\end{abstract}

\begin{keywords}
Nonlinear diffusion equations, Interacting particle systems, Multirate integration, Stochastic numerical methods, Meshfree methods
\end{keywords}

\begin{AMS}
35Q35,
35Q62,
65L20,
65M75
 
\end{AMS}

\section{Introduction}

\subsection{Motivation}
\label{sec:motivation}

In this work, we explore numerical approaches for the following linear and nonlinear diffusion equations, defined in terms of a fixed convex function $f: [0, \infty) \rightarrow \R \cup \{ \infty \}$ and a velocity field $v: [0, T] \times \R^d \rightarrow \R^d$:
\begin{equation}
    \begin{cases}
        \partial_t \rho = \nabla \cdot (\nabla f^*(p) + \rho v) \\
        p \in \partial f(\rho) \\
        \rho(0) = \rho_0 . 
    \end{cases}
    \tag{PDE}
    \label{eqn:PDE}
\end{equation}
Here, $f^*$ is the convex conjugate of $f$ and $\partial f(\rho)$ is the subdifferential of $f$ at $\rho$. 
Many equations take the form of equation \eqref{eqn:PDE} for appropriate choices of $f$ and $v$:
\begin{enumerate}
    \item Fast and slow diffusion equations \cite{Ambrosio2008, Jordan1998, Otto2001, Vazquez2007}: 
    Consider
    \begin{equation}
        \label{eqn1}
        f(s) = \begin{cases}        
            s \ln(s) - s & m = 1 \\
            \frac{s^m}{m - 1} & m \neq 1
        \end{cases} \; \Rightarrow \; \partial_t \rho = \Delta \left ( \rho^m \right ) + \nabla \cdot (\rho v).
    \end{equation}
    where $m \geq 1 - \frac{1}{d}$.
    We refer to the $m > 1$ case as ``slow diffusion," and the $m < 1$ case as ``fast diffusion."
    
    \item Sandpile dynamics \cite{Aronsson1996, Bantay1992, kwon}:
    Fix $v = 0$, $r_c > 0$, and consider 
    \begin{equation}
        \label{eqn18}
        f(s) = \begin{cases}
            0 & s \in [0, r_c] \\
            s \log \left ( \frac{s}{r_c} \right ) & s > r_c 
        \end{cases} \; \Rightarrow \; \text{`` $ \partial_t \rho = \begin{cases}
            \Delta \rho & \rho > r_c \\
            0  & \rho \leq r_c . \end{cases} $ "} 
    \end{equation}

    \item Height-constrained transport \cite{Craig2016, CRAIG2020239, DePhilippis2016, masson2026stifflimitnonhomogeneousconservation}:
    Consider
    \begin{equation}
        \label{eqn12}
        f(s) = \begin{cases}
    0 & s \in [0, 1] \\
    \infty &  \text{otherwise} 
    \end{cases} \; \Rightarrow \;  
        \text{`` $\begin{cases} \partial_t \rho + \nabla \cdot(\rho v) = 0 \\
        0 \leq \rho \leq 1 . \end{cases}$ "} 
    \end{equation}
    
    \item The viscosity formulation of the 2D Navier-Stokes equation \cite{Bertozzi2002}: Fix $d = 2$, and let $K$ be the Biot-Savart kernel. Consider
    \begin{equation}
        \label{eqn16}
        f(s) = \nu(s \ln(s) - s), v = K * \rho 
        \; \Rightarrow \;  
        \partial_t \rho = \nu\Delta \rho + v \cdot \nabla \rho. 
    \end{equation}
\end{enumerate}
In (2) and (3) above, equation \eqref{eqn:PDE} does not have a simple form in terms of differential operators.
Instead, we give a heuristic form in quotes.
There are few numerical methods available for simulating this full range of nonlinear Fokker-Planck equations, and our numerical results succeed in capturing key features of height-constrained transport, which motivated recent work by Masson et al. \cite{masson2026stifflimitnonhomogeneousconservation} (see Section \ref{sec:height_constraint}), as well as sandpile dynamics, as studied by Kwon and M\'esz\'aros \cite{kwon} (see Section \ref{sec:sandpile}).

We briefly mention that if $v(t, x) = \nabla V(x)$, where $V \in W^{1, \infty}(\R^d)$ is strongly convex, and if $f$ satisfies appropriate monotonicity, convexity, and lower semicontinuity assumptions, then 
equation \eqref{eqn:PDE} is the 2-Wasserstein gradient flow of the energy $\F$ defined by $\F(\rho) = \int f(\rho(x)) dx + \int V(x) \rho(x) dx $.
Furthermore, the solution to equation \eqref{eqn:PDE} converges exponential quickly to the unique minimizer of this energy functional  \cite{Ambrosio2008}.
Due to this property, equation \eqref{eqn:PDE} has attracted substantial interest in the sampling literature, as a candidate choice of dynamics to drive particle approximations toward minimizers of $\mathcal{F}$ \cite{chen2026deterministicsamplingmethodmaximum, chewi2024statisticaloptimaltransport, Craig2024}.

Because variants of the nonlinear Fokker-Planck equation describe many PDEs of mathematical interest and because of its connection to sampling, many previous works have developed particle methods to simulate certain variants of this PDE.
As discussed by Carrillo, Craig, and Patacchini \cite{carrillo2019blobmethoddiffusion}, naive particle methods fail to capture the behavior of this system: even if $\rho_0$ is a finitely supported measure, the solution of equation \eqref{eqn:PDE} does not remain a finitely supported measure.
A natural way to circumvent this difficulty is to define a \emph{regularized energy} $\F_{\eps}$ that approximates $\F$.
The hope is that the gradient flow induced by this regularized energy approximates the linear and nonlinear diffusion terms in equation \eqref{eqn:PDE}.
In this work, we follow the approach first introduced by Craig, Jacobs, and Turanova \cite{Craig2024}, due to the fact that they succeed in proving convergence as $\eps \rightarrow 0$ to equation \eqref{eqn:PDE} for a wide range of internal energy densities $f$, including all the examples listed above.
Their regularized energy induces the following PDE: 
\begin{equation}
    \label{eqn4}
    \begin{cases}
        \partial_t \rho^{\varepsilon} = \nabla \cdot \left (\rho^{\varepsilon} \left ( \nabla p^{\varepsilon} + v \right ) \right ) \\
        p^{\varepsilon} = \varphi_{\varepsilon} * f'_{\varepsilon}(\varphi_{\varepsilon} * \rho^{\varepsilon}) \\
        \rho^{\varepsilon}(0) = \rho^{\varepsilon}_0 . 
    \end{cases} 
\end{equation}

Originally, the authors of \cite{Craig2024} chose $f_{\eps}$ to be a variant of the Moreau-Yosida regularization, but we found that other regularizations perform better numerically.
In particular, we require that $f''_{\eps}$ is locally Lipschitz, $(s, f_{\eps}'(s)) \rightarrow \partial f(s)$ for a.e. $s \in \text{int}(\text{Dom}(f))$, and $f_{\eps}(s) \nearrow \infty$ for $s \notin \text{Dom}(f)$.
If $f''$ is locally Lipschitz, we set $f = f_{\eps}$.
While the outer convolution with the mollifier $\varphi_{\eps}$ in the definition of $p_{\eps}$ was important for the analytic study \cite{Craig2024}, as the $f_{\eps}'$ we consider are already very regular, we drop this outer mollifier.
With these changes in hand, when $\rho_0^{\eps} = \sum_{j = 1}^N m_j \delta_{x_j^0}$ is a finitely supported measure, then the solution to the modified equation \eqref{eqn4} is of the form $\rho^{\eps}(t) = \sum_{j = 1}^N m_j \delta_{x_j(t)}$, where
\begin{equation}
    \tag{ODE}
    \begin{cases}
        \dot{x}_i(t) = - f''_{\varepsilon} \left(\sum \limits_{j = 1}^N m_j \varphi_{\varepsilon}(x_i(t) - x_j(t)) \right )
        \sum \limits_{j = 1}^N m_j \nabla \varphi_{\varepsilon}(x_i(t) - x_j(t)) - v(t, x_i(t)) \\
        x_i(0) = x_i^0 . 
    \end{cases} 
    \label{eqn:xdot}
\end{equation}
Thus, numerically approximating solutions of equation \eqref{eqn:PDE} is reduced to approximating $\rho_0$ by a finitely supported measure and solving the above system of ODEs.

When integrating system \eqref{eqn:xdot}, implicit methods such as the backward Euler method are costly, as they require optimization of a function over a $Nd$-dimensional space.
We found that we could obtain comparably accurate solutions at a much lower computational cost by instead implementing the forward Euler (FE) method with a finer step size.
\footnote{As noted by Xu and Li \cite{xu2025forward}, the forward Euler scheme applied directly to the Wasserstein gradient flow of an unregularized energy can fail dramatically.
These authors emphasize the importance of regularization, allowing explicit discrete-time solvers to successfully approximate the underlying PDE without losing their particle structure, as we do in the present work.}

\subsection{Main Results}
\label{sec:main}

Even with the computational savings of explicit methods, solving system \eqref{eqn:xdot} presents a challenge, as calculating the velocity of the $i$th particles requires knowledge of all $N$ particle locations. 
Thus, if we have $N$ particles and take $M$ time steps, the total computational cost of the FE method is $O(MN^2)$.
The goal of the present work is to explore stochastic numerical methods that improve this computation rate.

We first consider the random batch (RB) method introduced by Jin, Li, and Liu \cite{Jin2019, Jinetal2021}.
At each time step, particles are split into random ``batches" of size $p$.
Particles only interact within their batch, i.e. one modifies the ODE system in system \eqref{eqn:xdot} so that one only sums over batch particles and one normalizes the weights $\{ m_i \}$ so they sum to the same total over each batch.
See Section \ref{sec:alg_details} for more details and pseudocode.
The total computational cost over all time steps is $O(MNp)$.
While the RB method effectively decreases computation time, we find that for the variants of equation \eqref{eqn:PDE} that we consider, it often loses significant accuracy, even in the most accurate case that $p = \frac{N}{2}$, i.e. two batches.

To avoid some of the numerical drawbacks of the RB method, we propose an alternative stochastic numerical method, called the random multirate (RM) method.
Given a fixed integer $k$, called the time step ratio, at each $k$th time step, we select $N - p$ particles to take a coarse time step of size $k \Delta t$, while the remaining $p$ particles take $k$ fine time steps of size $\Delta t$.
At a high level, we view the RB method as a stochastic \emph{spatial} discretization, while the RM method is a stochastic \emph{temporal} discretization.
Again, see Section \ref{sec:alg_details} for more details and pseudocode.
The total computational cost of the RM method is 
\[ O \left (MN \left ( p + \frac{N - p}{k}\right ) \right ) . \]
The first term of the above expression is equal to the computational cost of the RB method.
Thus, for a given choice of $M, N$, and $p$, we expect that the RM method will be at least as slow as the RB method.
However, in many examples, we find that the RM method is significantly more accurate than the RB method, ultimately achieving the best tradeoff between accuracy and efficiency.
The benefits of the RM method are most obvious in the slow diffusion setting, and we observe the strongest performance of the RM method in the height-constraint case.
Furthermore, in practice, the computational savings of the RB and RM method exceed the theoretical predictions above, due to caching issues once the number of particles is sufficiently large.


\subsection{Related Works}

    \label{sec:related}
    Nonlocal particle approximations of PDEs have been an active area of research for many years; 
    see \cite{carrillo2019blobmethoddiffusion, CarrilloCraigYao2019, Craig2023} for a comprehensive review of the related literature and \cite{Burger2023, Carrillo2024} for some recent advancements.
    Our main focus is not to develop a new nonlocal particle method for diffusion, but to study improved discretizations of the ODE system arising from existing particle methods.
    
    Recent years have seen significant contributions to improving ODE discretizations of related problems arising in interacting particle systems, with the RB method among its most significant developments.
    Furthermore, the RB method itself draws on two fundamental stochastic approaches in optimization. 
    On the one hand, there are key similarities between the RB method and stochastic gradient descent (SGD), where a small batch of particles are chosen at each time step to form the ``noisy" gradient \cite{bottou1999online, bubeck2015convex}.
    On the other hand, the RB method is also similar to random coordinate descent (RCD), where only a small sample of randomly selected coordinates are updated at each time step \cite{nesterov2012efficiency, wright2015coordinate}.

    Our method is inspired by the \emph{multirate forward Euler method} \cite{Gunther2025} and \emph{multiple time-stepping method} \cite{martin2025multiple}.
    Traditionally, these methods are used in settings where there are known ``fast" and ``slow" components of a system, which are partitioned once at the start of the integration.
    However, static partitioning is poorly-suited to system \eqref{eqn:xdot}, since certain particles initially move fast and later move slowly, while other particles display the opposite behavior; see for example Figure 5 in \cite{CarrilloCraigYao2019}.
   
    Savcenco, Hundsdorfer, and Verwer avoid static partitioning components of the system, instead implementing a \emph{self-adjusting} scheme \cite{Savcenco2006}.
    For each particle, the coarseness of the time step is determined during time integration. 
    Integration is performed twice, using two different methods of order $p$ and $p-1$, giving two numerical solutions.
    Depending on the difference between these solutions, 
    one may reject the time step and redo the integration with a smaller step size.
    Due to the large number of particles, such an approach would be computationally prohibitive in our setting. 
    Instead, by randomly repartitioning particles rather than mathematically estimating local errors for every single trajectory, the RM method bypasses this computational bottleneck.

    \subsection{Outline} The rest of the paper is organized as follows. 
    In Section \ref{sec:properties}, we provide details and pseudocode for both the RB and RM methods, review a known error bound for the RB method, and state and prove an error bound for the RM method.
    Section \ref{sec:num_approach} describes details of our numerical approach: choice of initialization and regularization parameters and how error and runtime are measured.
    In Section \ref{sec:interacting}, we provide a numerical example to demonstrate even better rates of convergence of the RM method when applied to ODEs arising from blob methods for diffusion.
    As shown in Sections \ref{sec:diffusion_exact}, \ref{sec:diffusion_longtime}, and \ref{sec:convergence_rate}, this trend persists for a range of porous medium and fast diffusion equations, both on bounded time intervals and in the longtime limit. 
    It also persists for nonsmooth internal energy densities, as in the case of height-constrained transport (Section \ref{sec:height_constraint}) and sandpile dynamics (Section \ref{sec:sandpile}).
    We even see this trend continues in two dimensions, as explored in Section $\ref{sec:2d}$.
    Our codebase is available at \url{https://github.com/claireamurphy/Random-Blob-Methods-for-Diffusion}. 

\section{Random Blob Methods}

\label{sec:properties}

\subsection{Two Stochastic Discretizations}

\label{sec:alg_details}

We now describe the two different stochastic discretizations that we compare in this work.
First, we consider the random batch (RB) method.
As discussed in Section \ref{sec:main}, at each time step, particles are randomly split into batches of size $p$ and only interact with particles in their batch.
More specifically, if $I$ equals the indices of particles in some batch, define
\begin{equation}
    \label{eqn:F}
    F_{RB}(x, t, \tilde{x}_1, \ldots, \tilde{x}_N, I) = - f''_{\varepsilon} \left(\sum \limits_{j \in I} \tilde{m}_j \varphi_{\varepsilon}(x - \tilde{x}_j) \right )
        \sum \limits_{j \in I} \tilde{m}_j \nabla \varphi_{\varepsilon}(x - \tilde{x}_j) - v(t, x) ,
\end{equation}
where $\tilde{m}_j = m_j / \sum_{i \in I} m_i$.
A forward Euler implementation of the RB method is then given by algorithm \ref{alg:rbm}.
While the authors of \cite{Jin2019, Jinetal2021} originally consider a specific choice of $F_{RB}$, as in equation \eqref{eqn11}, we present a more general case.

\begin{algorithm}[H]
\caption{} \label{alg:rbm} 
\begin{algorithmic}[1]
\STATE \textbf{Input:} $\Delta t$, $p, N, T, F_{RB}, \{ x_1^0, \ldots, x_N^0 \}$ \;
\STATE Define number of batches $B = N / p$
\FOR{$m=0$ to $T / \Delta t - 1$ }
    \STATE Randomly divide $\{1, \ldots, N\}$ into $B$ batches, $\{ I_1, \ldots I_{B} \}$, each of size $p$
    \FOR{$b=1$ to $B$}
        \FOR{$i \in I_b$}
            \STATE $x_i^{m + 1} \gets x_i^m + \Delta t F_{RB}(x_i^m, m \Delta t, x_1^m, \ldots, x_N^m, I_b)$
        \ENDFOR
    \ENDFOR
    
\ENDFOR
\end{algorithmic}
\end{algorithm}

Next, we consider the random multirate (RM) method.
We randomly select a group of $N - p$ particles to take one time step of size $k \Delta t$ in the forward Euler scheme.
The remaining $p$ particles take $k$ fine time steps with size $\Delta t$.
When computing the trajectories of the particles with fine time step, we use a linear interpolation of the particles with coarse time step to approximate their location.
More specifically, define
\begin{equation}
    \label{eqn15}
    F_{RM}(x, t, \tilde{x}_1, \ldots, \tilde{x}_N) = F_{RB}(x, t, \tilde{x}_1, \ldots, \tilde{x}_N, \{ 1, \ldots, N \}) .
\end{equation}
We also define
\begin{equation}
    \label{eqn19}
    L(i, m, I_{\text{fine}}, I_{\text{coarse}}) = 
\begin{cases}
    x_i^m & i \in I_{\text{fine}} \\
   \left ( 1 - \frac{\ell + 1}{k}\right )x_i^{nk} + \frac{\ell + 1}{k} x_i^{(n + 1)k} & i \in I_{\text{coarse}}, 
\end{cases} 
\end{equation}
where $m = nk + \ell$ for $0 \leq \ell < k$.
The function $L$ leaves particles with fine time step unchanged, while performing a linear interpolation of particles with coarse time steps.
The RM method is then given by algorithm \ref{alg:rmm}.

\begin{algorithm}[H]
    \caption{} \label{alg:rmm} 
    \begin{algorithmic}[1]
    \STATE \textbf{Input:} $\Delta t, k, p, N, T, F_{RM}, L, \{ x_1^0, \ldots, x_N^0 \}$ 
    \FOR{$m=0$ to $T / \Delta t - 1$ }
        \IF{$m \; \text{mod} \; k = 0$}
            \STATE Let $I_{\text{fine}}$ equal $p$ randomly selected indices of $\{1, \ldots, N \}$
            
            \STATE $I_{\text{coarse}} \gets \{1, \ldots, N \} \backslash I_{\text{fine}}$
    
            \FOR{$i \in I_{\text{coarse}}$}
                \STATE $x^{m + 1}_i \gets x^m_i + k \Delta t F_{RM}(x_i^m, m \Delta t, x_1^m, \ldots, x_N^m)$
            \ENDFOR
        \ENDIF
    
        \STATE
            \FOR{$1 \leq i \leq N$}
                \STATE$\tilde{x}_i \gets L(i, m, I_{\text{fine}}, I_{\text{coarse}})$
            \ENDFOR

            \FOR{$i \in I_{\text{fine}}$}
               \STATE $x_i^{m + 1} \gets x_i^m + \Delta t F_{RM}(x_i^m, m\Delta t, \tilde{x}_1, \ldots, \tilde{x}_N)$
            \ENDFOR

        \ENDFOR
    \end{algorithmic}
    \end{algorithm}

\subsection{An Error Analysis for Random Blob Methods}
In our error analysis, for simplicity, we assume $F_{RB}$ and $F_{RM}$ do not depend on time.
Let $X_i(t)$ be the solution to the ODE system 
\begin{equation}
    \label{eqn13}
    \begin{cases}
        \dot{X_i}(t) = G(X_i(t), X_1(t), \ldots, X_N(t)) \\
    X_i(0) = x_i^0 .
    \end{cases}
\end{equation}
To start, we review the results of the convergence analysis proved for the RB method.
Following the proof of convergence in the original RB paper \cite{Jin2019}, the authors of \cite{Jinetal2021} consider the case 
\begin{align}
    G(x, \tilde{x}_1, \ldots, \tilde{x}_N) = \sum_{j = 1}^N m_j K_{ij}(x, \tilde{x}_j) - v(x) , \\
    \label{eqn11}
    F_{RB}(x, \tilde{x}_1, \ldots, \tilde{x}_N, I) = \sum_{j \in I} \tilde{m}_j K_{ij}(x, \tilde{x}_j) - v(x) .
\end{align}
Note that \cite{Jinetal2021} does not discretize the batched dynamics in time; rather, at each time step the batched trajectory is assumed to be integrated exactly.
If $x_i(t)$ is the solution to the RB method at time $t$ with initial solution $x_i^0$, then one has the following \emph{strong} convergence result:
\[ \sup_{t \leq T} \sum_{i = 1}^N m_i \E[|x_i(t) - X_i(t)|^2] \leq C \sqrt{\frac{\Delta t}{p -1} + (\Delta t)^2} . \]
Here, $C$ is a constant independent of $N$ and $p$.

Now, we analyze the error of the RM method.

\begin{theorem}
    \label{thm:error_analysis}
    Fix $G: \R^{d \times Nd} \rightarrow \R^d$ such that for any $x, y \in \R^d, \bm{u}, \bm{v} \in \R^{dN}$,    
    \[ ||G(x, \textbf{u})|| \leq L_0 \text{ and } ||G(x, \bm{u}) - G(y, \bm{v})|| \leq L_1\max(||x - y||, ||u_1 - v_1||, \ldots, ||u_N - v_N||) . \]
    Let $X_i(t)$ be the solution to the ODE system given by equation \eqref{eqn13}.
    Let $x_i^m$ be the solution to the RM method with $F_{RM} = G$ after $m$ time steps for any choices of $I_{\text{fine}}$ and $I_{\text{coarse}}$, with initial condition $x_i^0$.
    If $M$ is divisible by $k$ and $T := M \Delta t$, \[ \max_{1 \leq i \leq N} ||X_i(T) - x_i^M ||\leq \frac{L_0 e^{6L_1T}}{4} \left ( L_1(k\Delta t)^2 + k\Delta t \right ) . \]
\end{theorem}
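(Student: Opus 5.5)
We argue deterministically, since the claimed bound holds for every choice of $I_{\text{fine}}$ and $I_{\text{coarse}}$. The plan is a discrete Gronwall argument over coarse blocks of length $k\Delta t$. Set $t_m = m\Delta t$, $E_n := \max_i \|X_i(t_{nk}) - x_i^{nk}\|$, and $h := k\Delta t$. The goal is a one-block recursion of the form
\[ E_{n+1} \leq (1 + c L_1 h) E_n + C_0 L_0 (L_1 h^2 + h) h . \]
Iterating over the $M/k = T/h$ blocks with $E_0 = 0$ and using $(1+cL_1h)^{T/h} \leq e^{cL_1T}$ then gives $E_{M/k} \leq T e^{cL_1T} C_0 L_0 (L_1h^2 + h)$. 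The factor $T$ is absorbed into the exponential via $L_1 T \le e^{L_1T}$, or handled by a Gronwall sum, and this is where the constants $6$ and $1/4$ have to be matched. I expect the bookkeeping of constants to need some care, but the structure is standard.

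First, two preliminary facts. Because $\|G\| \leq L_0$, the exact solution satisfies $\|X_i(t) - X_i(s)\| \leq L_0|t-s|$. The same holds for every numerical trajectory on a block: coarse particles move linearly at speed at most $L_0$, and fine particles take Euler steps of size at most $L_0\Delta t$. Consequently, within a block, every current or interpolated position stays within $L_0 h$ of its value at $t_{nk}$.

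Next, the block bounds. For a coarse particle $i$,
\[ X_i(t_{(n+1)k}) - x_i^{(n+1)k} = X_i(t_{nk}) - x_i^{nk} + \int_{t_{nk}}^{t_{(n+1)k}} \bigl[G(X_i(s),\bm X(s)) - G(x_i^{nk},\bm x^{nk})\bigr]\,ds . \]
Lipschitz continuity bounds the integrand by $L_1(E_n + L_0(s - t_{nk}))$, so this error is at most $E_n(1+L_1h) + L_1L_0h^2/2$.

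For a fine particle, let $e_\ell := \max_{i \in \Ifine}\|X_i(t_{nk+\ell}) - x_i^{nk+\ell}\|$ for the fine steps $\ell = 0,\dots,k-1$. The arguments of $G$ involve the interpolated coarse positions $\tilde x_j$, which do not agree with the coarse particles' actual positions. Their deviation from $X_j(t_{nk+\ell})$ is bounded by $\max\bigl(E_n, \|X_j(t_{(n+1)k}) - x_j^{(n+1)k}\|\bigr)$, since the interpolation is a convex combination, plus the nonlinearity of $X_j$ across the block. That nonlinearity is $O(L_0h)$, by the speed bound: a straight chord and a speed-$L_0$ curve with the same endpoints differ by at most $L_0 h$ at any time. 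Using the coarse estimate above, the argument error is therefore at most $E_n(1+L_1h) + L_1L_0h^2 + L_0 h$. Each fine step then contributes at most $\Delta t\,L_1\bigl(\max(e_\ell, \text{coarse error}) + L_0\Delta t\bigr)$, where the last term is the local Euler error. This gives
\[ e_{\ell+1} \leq e_\ell + L_1\Delta t\bigl(e_\ell + E_n(1+L_1h) + L_0(L_1h^2 + h) + L_0\Delta t\bigr) , \]
and summing over the $k$ steps yields the block recursion with $c$ a small integer. The combination of $(1+L_1h)$ factors is what produces the exponent $6L_1T$ after using $L_1 h$ terms inside the exponential.

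The main obstacle is the interpolation error for coarse particles as seen by fine ones. This term is only $O(L_0h)$ per unit time rather than $O(L_0L_1h^2)$, and it is exactly what degrades the rate to $O(k\Delta t)$, i.e. the $+k\Delta t$ term in the bound. I would be careful to bound it crudely by $L_0h$, which is consistent with the stated rate, rather than attempt a sharper second-order estimate, which would require a bound on $\dot G$ that is not assumed.
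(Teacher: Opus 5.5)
Your argument is correct, and it takes a genuinely different route from the paper. The paper never compares the scheme with the exact flow directly on a block. Instead, it introduces an auxiliary RM trajectory $\bm y$ started at $\bm X(t_{nk})$ with the same partition. It then proves two lemmas. The consistency lemma says that from exact data one block has error at most $L_0L_1e^{2L_1k\Delta t}(L_1(k\Delta t)^3+(k\Delta t)^2)$, because the interpolated coarse position is then itself a forward Euler step of length $(\ell+1)\Delta t$. The stability lemma says the one-block RM map is $e^{4L_1k\Delta t}$-Lipschitz in the max norm. These are combined with the discrete Gr\"onwall lemma, and the exponent $6=2+4$ comes from the two factors. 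Your one-pass recursion dispenses with the auxiliary trajectory and gives a smaller amplification: with your estimates (your $h=k\Delta t$) one gets $E_{n+1}\le e^{2L_1h}E_n+L_0L_1h\,e^{L_1h}(L_1h^2+2h)$. Your crude chord bound is also more robust. From exact data, the weight $(\ell+1)/k$ in $L$ makes $\tilde x_j^\ell$ a forward Euler approximation of $X_j(t_{\ell+1})$, not of $X_j(t_\ell)$. The paper's estimate $\|X_j(t_\ell)-\tilde x_j^\ell\|\le L_0L_1((\ell+1)\Delta t)^2$ drops the resulting $L_0\Delta t$ offset, which is harmless for the order. Your convexity-plus-speed bound holds for any weight in $[0,1]$. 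What the paper's split buys in return is a stability statement that does not involve the exact solution at all.

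Two points of bookkeeping need fixing. First, every local contribution carries a factor $L_1$: the coarse term is $\tfrac12L_0L_1h^2$, and each fine step is multiplied by $L_1\Delta t$. So your local term should read $C_0L_0L_1(L_1h^2+h)h$; without that $L_1$ the leftover factor $T$ could not be absorbed. This also explains why the $O(L_0h)$ chord error is harmless. It costs $L_1h\cdot L_0h$ per block, the same order as the coarse Euler error, and that coarse error alone already forces the $O(k\Delta t)$ rate, so the interpolation is not what ``degrades'' it. Nor would a second-order chord bound need any bound on $\dot G$: the map $s\mapsto G(X_j(s),\bm X(s))$ is $L_0L_1$-Lipschitz, exactly as you use for the coarse particle. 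Second, $L_1T\le e^{L_1T}$ is too lossy to reach the constant $\tfrac14$. With the estimates above it leaves $2e^{4L_1T}$, which exceeds $\tfrac14e^{6L_1T}$ for small $L_1T$. Sum the geometric series instead: $\sum_{n<M/k}e^{2L_1hn}\le(e^{2L_1T}-1)/(2L_1h)$ gives $E_{M/k}\le L_0e^{L_1T}(e^{2L_1T}-1)(L_1h^2+h)$. For $y=e^{L_1T}\ge1$ one has $y^6-4y(y^2-1)=(y^3-2)^2+4(y-1)\ge0$, which yields the stated bound.
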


Notice that the convergence rate is $O(k \Delta t)$, as is the error rate for the forward Euler method with time step $k \Delta t$.
On one hand, this rate is sharp for general ODEs of the form of equation $\eqref{eqn13}$ for any choice of $I_{\text{fine}}$ and $I_{\text{coarse}}$.
This can be seen by an explicit calculation in the case $G(x, \textbf{u}) = x$ and is also seen in the simulations of Section \ref{sec:toy}.
However, in many of the following numerical examples for our \eqref{eqn:xdot}, we see that the RM method outperforms the FE method for a fixed step size $k \Delta t$.
This shows that, for ODEs arising in blob methods for diffusion, the RM method often achieves convergence rates superior to these theoretical guarantees.

There are major structural differences between the strong convergence result in \cite{Jinetal2021} and Theorem \ref{thm:error_analysis}. 
First, the error considered in \cite{Jinetal2021} is only the error due to the batched trajectories; time integration at each step is assumed to be exact.
On the other hand, our result considers error due to time discretization and the trajectory approximations at each fine time step. 
Secondly, the result presented in \cite{Jinetal2021} depends on $p$, the batch size, whereas our result applies for any choice of $p$.
Finally, the result in \cite{Jinetal2021} is in-expectation, whereas our bound is deterministic, holding for every realization of the partition.
We now turn to the proof of Theorem \ref{thm:error_analysis}.

\begin{proof}[Proof of Theorem \ref{thm:error_analysis}]
    Fix $m \in \mathbb{N}$ divisible by $k$.
    Define $\bm{y}^m = \bm{X}(t_m)$.  For $\ell \in \mathbb{N}$ satisfying $0 < \ell \leq k$,
    let $\bm{y}^{m + \ell}, \bm{x}^{m + \ell}$ equal the solution to the RM method after $\ell$ time steps with initial values $\bm{y}^m$, $\bm{x}^m$, respectively, where we select the same $\Ifine$ and $\Icoarse$ for both solutions.
    Define 
    \begin{equation}
        \label{eqn:E_def}
        E^{m + \ell}_i = || x_i^{m + \ell} - X_i((m + \ell) \Delta t)|| \; \; \text{ and }\; \; E^{m + \ell} = \max_{1 \leq i \leq N} E^{m + \ell}_i . 
    \end{equation}
    Define 
    \begin{equation}
        H^{m + \ell}_i = ||x_i^{m+ \ell} - y_i^{m+ \ell}|| \; \; \text{ and }\; \;  H^{m+ \ell} = \max_{1 \leq i \leq N} H_i^{m + \ell}. 
    \end{equation}
    By Lemma \ref{propBound} (a stability result), Lemma \ref{lte} (a consistency result), and the fact that $H^m = E^m$, we obtain
    \begin{align*}
        E_i^{m + k}
        & = ||x_i^{m + k} - X_i((m + k) \Delta t)|| \\
        & \leq ||x_i^{m + k} - y_i^{m + k}|| + ||y_i^{m + k} - X_i((m + k) \Delta t)|| \\
        & \leq e^{4 L_1 k \Delta t} E^m + L_0 L_1 e^{2L_1 k \Delta t} \left ( L_1(k\Delta t)^3 + (k\Delta t)^2 \right ) .
    \end{align*}
    Now, choose $n$ such that $M = nk$.
    Using the fact that $E^0 = 0$, we invoke Lemma \ref{discrete_gronwall} (a discrete Gr\"onwall-type inequality) to obtain
    \begin{align*}
        E^M 
        & \leq L_0 L_1 e^{2L_1 k \Delta t} \left ( L_1(k\Delta t)^3 + (k\Delta t)^2 \right ) \left ( \frac{e^{4 L_1 n k \Delta t} - 1}{e^{4 L_1 k \Delta t} - 1} \right ) \\
        & \leq L_0 L_1 e^{2L_1 T} \left ( L_1(k\Delta t)^3 + (k\Delta t)^2 \right ) \left ( \frac{e^{4 L_1 T} - 1}{4 L_1 k \Delta t} \right ) \\
        & = \frac{L_0 e^{2 L_1 T} \left ( e^{4 L_1 T} - 1\right ) }{4} \left ( L_1(k\Delta t)^2 + k\Delta t \right ) \\
        & \leq \frac{L_0 e^{6L_1T}}{4} \left ( L_1(k\Delta t)^2 + k\Delta t \right ) . 
    \end{align*}
    
\end{proof}


We close with a lemma providing sufficient conditions on the internal energy density $f_{\eps}$ and external velocity $v$ in system $\eqref{eqn:xdot}$ for which the hypotheses in Theorem \ref{thm:error_analysis} hold.
The proof of this lemma is given in appendix \ref{sec:error_append}.

\begin{lemma}
    \label{lm:relax_assumptions}
    Suppose $f_{\eps}'': (0, \infty) \rightarrow [0, \infty)$, $\varphi_{\eps}: \R^d \rightarrow (0, \infty), \nabla \varphi_{\eps}: \R^d \rightarrow \R^d$, and $v: \R^d \rightarrow \R^d$ are locally Lipschitz and all $m_j$ are positive.
    Further suppose that there exists some $R > 0$ such that, for all $1 \leq i \leq N$,
    \begin{enumerate}
        \item for all $1 \leq m \leq M$, $x_i^m \in B(0, R)$,
        \item for all $0 \leq t \leq M \Delta t$, $X_i(t) \in B(0, R)$.
    \end{enumerate} 
    Define $G := F_{RM}$, where $F_{RM}$ is defined in equation \eqref{eqn15}.
    Then, up to modifying $G$ outside of $B(0, R)$ in a way which changes neither discrete nor continuous time solutions, $G$ satisfies the boundedness and Lipschitz conditions in Theorem \ref{thm:error_analysis}.
\end{lemma}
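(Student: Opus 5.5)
We prove Lemma \ref{lm:relax_assumptions} by restricting $G$ to a compact set, where local Lipschitz continuity becomes global, and then extending it to all of $\R^{d\times Nd}$ with a cutoff that preserves bounds and Lipschitz constants. Concretely, $G(x,\bm{u}) = -f_\eps''\bigl(\sum_j m_j\varphi_\eps(x-u_j)\bigr)\sum_j m_j\nabla\varphi_\eps(x-u_j) - v(x)$. Let $\pi:\R^d\to \overline{B(0,R)}$ be the metric projection onto the closed ball. It is $1$-Lipschitz and equals the identity on $\overline{B(0,R)}$. Define
\[ \tilde G(x,\bm{u}) = G(\pi(x),\pi(u_1),\ldots,\pi(u_N)). \]
By hypotheses (1) and (2), every argument at which $G$ is evaluated along the continuous trajectory $X_i(t)$ lies in $B(0,R)$, so the continuous solution is unchanged. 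At $m=0$ the initial points $x_i^0=X_i(0)$ are also in the ball.

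For the discrete scheme we must also check the interpolated points $\tilde{x}_i$ produced by $L$ in \eqref{eqn19}. These are convex combinations of $x_i^{nk}$ and $x_i^{(n+1)k}$, both in $B(0,R)$, so they stay in the convex ball. Hence the RM iterates computed with $\tilde G$ coincide with those computed with $G$, and neither solution changes.

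\textbf{Boundedness and Lipschitz bounds.} It suffices to show that $G$ is bounded and Lipschitz, with respect to the max-norm in the statement, on $K=\overline{B(0,R)}^{N+1}$. Composing with the $1$-Lipschitz map $\pi$ then gives $\|\tilde G\|\le L_0$ and the same Lipschitz constant $L_1$ globally.

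1. For $x,u_j\in \overline{B(0,R)}$, the difference $x-u_j$ lies in the compact set $\overline{B(0,2R)}$. On this set $\varphi_\eps$ and $\nabla\varphi_\eps$ are Lipschitz, being locally Lipschitz on a compact set, and they are bounded.
2. Consequently $S(x,\bm u)=\sum_j m_j\varphi_\eps(x-u_j)$ is Lipschitz with constant at most $2\,\mathrm{Lip}(\varphi_\eps)\sum_j m_j$, using $\|(x-u_j)-(y-v_j)\|\le 2\max(\cdot)$.
3. The step I expect to need care is that $f_\eps''$ is only locally Lipschitz on $(0,\infty)$, so we must keep its argument away from $0$. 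Since $\varphi_\eps>0$ is continuous, it attains a positive minimum $c$ on $\overline{B(0,2R)}$. Because all $m_j>0$, we get $S\ge c\sum_j m_j>0$. Also $S\le \max\varphi_\eps\sum_j m_j$, so $S$ takes values in a compact interval $[a,b]\subset(0,\infty)$, on which $f_\eps''$ is bounded and Lipschitz.
4. Finally, products and sums of bounded Lipschitz functions are bounded and Lipschitz, since $\|gh - g'h'\|\le \|g\|_\infty\|h-h'\|+\|h'\|_\infty\|g-g'\|$. The term $v(x)$ is handled the same way on $\overline{B(0,R)}$. This produces explicit $L_0$ and $L_1$ depending on $R$, $\eps$, and the $m_j$, which completes the proof.
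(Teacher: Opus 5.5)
Your argument is correct, but it extends $G$ off the ball differently from the paper.

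\textbf{The paper's route.} The paper first shows $G$ is locally Lipschitz on all of $\R^d\times\R^{Nd}$. The only delicate term, $f_\eps''\circ h$, is handled by choosing a neighborhood of $h(x,\bm u)>0$ on which $f_\eps''$ is Lipschitz. It then multiplies by a smooth cutoff, $\tilde G(x,\bm u)=\chi(x)\prod_i\chi(u_i)\,G(x,\bm u)$, with $\chi=1$ on $B(0,R)$ and $\chi=0$ off $B(0,R+1)$. It concludes because a locally Lipschitz function with compact support is bounded and globally Lipschitz.

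\textbf{Your route.} You compose with the componentwise projection onto $\overline{B(0,R)}$ and estimate $G$ directly on $\overline{B(0,R)}^{N+1}$. You keep the argument of $f_\eps''$ in a compact interval $[a,b]\subset(0,\infty)$ via the positive minimum of $\varphi_\eps$ on $\overline{B(0,2R)}$.

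\textbf{What each buys.} The paper's route is shorter but purely qualitative. Yours gives explicit $L_0,L_1$ equal to the bounds of $G$ on the ball itself. There is no contribution from $\nabla\chi$, from $G$ on the larger ball $B(0,R+1)$, or from the product of $N+1$ cutoffs, whose naive max-norm Lipschitz bound grows like $N$. With the normalized weights $\tilde m_j$ of \eqref{eqn:F}, your constants depend only on $R$, $\eps$, $f_\eps''$ and $v$, not on $N$. That is worth having, since $L_1$ enters Theorem~\ref{thm:error_analysis} exponentially.

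You also make explicit what the paper leaves to ``by construction'': the interpolated points produced by $L$ stay where the two versions of $G$ agree, by convexity of the ball. In both arguments this tacitly uses that $k$ divides $M$, so every coarse endpoint $x_i^{(n+1)k}$ entering an interpolation is covered by hypothesis 1.
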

    
\section{Numerical Approach for Random Blob Methods}

\label{sec:num_approach}

\subsection{Choice of Initialization and Parameters}

In all our simulations, we suppose that $\rho_0(x)$ has compact support in $\R^d$ or decays sufficiently quickly so that we commit arbitrarily small error by truncating its support.
We initialize particles $\{ x_1^0, \ldots, x_N^0 \}$ on a grid with side length $h$ that covers this support, and we define $m_i$ as the integral of $\rho_0(x)$ on the grid square centered at $x_i$.
We evolve particles according to the ODE system described by system \eqref{eqn:xdot}.
In all simulations, the mollifier $\varphi_{\varepsilon}$ is given by $\varphi_{\varepsilon}(x) = \frac{1}{\eps^d} \eta_1 \left ( \frac{x}{\eps}\right ) $,
where $\eta_1(x)$ is the standard Gaussian.
This choice of mollifier is justified in appendix \ref{sec:moll}, where we compare the performance of several different mollifiers.
Unless specified otherwise, in all simulations, we set $\eps = 4h^{0.99}$.
At times, we choose $f_{\eps} \neq f$, and we allow this to be a different choice of $\eps$ than above.

At time $t$, the particle solution is given by
\begin{equation}
    \label{eqn2}
    \rho^{\eps}(t, x) = \sum \limits_{i = 1}^N m_i \delta_{x_i(t)}(x) ,
\end{equation}
where $x_i(t)$ are the solutions of system \eqref{eqn:xdot}.
In order to visualize the particle solution and compare to classical solutions of the PDE, we plot its convolution with a Gaussian mollifier; that is, $\mu(t, x) = \varphi_{\eps} * \rho^{\eps}(t, x)$.
Again, we allow this to be a different choice of $\varepsilon$.





\subsection{Measuring Runtime and Error}
    \label{sec:error_and_runtime}

    To measure runtime, we use the time() method from the Python time library.
    We measure the time from the start of the first ODE time step to the completion of the final ODE time step.
    All simulations were completed on a 2025 Apple Macbook Air with an Apple M4 chip.

    For the majority of simulations, we measure error by approximating the 2-Wasser\-stein distance between the particle solution and exact continuum solution at some final time $T$.
    We discretize the exact solution at final time $T$, then compute the 2-Wasserstein distance between our particle solution and this discretized solution.
    This calculation is performed using the emd2() function from the POT library.
    See Appendix $\ref{sec:exact}$ for exact continuum solutions for several variants of equation $\eqref{eqn:PDE}$.

    The RB and RM methods are both stochastic methods; their performance varies for different choices of random seed.
    In many simulations throughout this work, we calculate the particle solution for five or ten distinct random seeds.
    This averaging is done on a case-by-case basis, and expressly noted in each simulation where it occurs.
    
    In the $d = 1$ setting, we consider only the average runtime and error, as in all observed cases, there was little meaningful variation in either runtime or error.
    In the $d = 2$ setting, variation in runtime is almost completely negligible, as the total runtime of simulations masks any variation due to background processes.
    On the other hand, variation in error is much more pronounced in many $d = 2$ cases.
    In all $d = 2$ simulations, we include vertical bars corresponding to standard deviation in errors across all seeds, while only considering the average runtime over all simulations.
   
    \subsection{Batch Size and Time Step Ratio}
    \label{sec:batch_size}

    In this section, we examine the effect of $p$, the batch size (RB) or number of particles with fine time step (RM).
    We also consider the time step ratio $k$ used in the RM method.

    For $f$ given by equation $\eqref{eqn1}$ with $m = 2$ and $v(x) = \frac{x}{3}$, consider the corresponding ODE system given by system \eqref{eqn:xdot}.
    We set $h = 0.01$ and choose the same uniformly spaced time steps for both methods; in this case
    \begin{equation}
        \label{eqn:t_steps}
        \Delta t \in \{ 0.005, 0.006, \ldots, 0.013, 0.014 \} .
    \end{equation}
    The time steps are chosen to highlight the regime of largest variation in the performance of the methods. 
    For each time step, we consider the runtime and error and plot the values as a point of the graph.
    In the case of the RB method, the FE method is the same as the ``one batch case," i.e. the case where all particles are included in the same batch.
    In the case of the RM method, the FE method is the same as the case that 100\% of particles move with fine time step.
    
    In the two leftmost panels of Figure \ref{fig8}, we vary the number of batches (for the RB method) and number of particles moving with fine time step (for the RM method).
    The RB method works as expected; as we choose more and more batches, we reduce the computation time, while increasing the error.
    On the other hand, in the case of the RM method, we observe better accuracy when only 50\% of particles move with fine time step, compared to the FE method.
    This outperformance of RM with FE is a common feature of our simulations of blob methods for diffusion.
    We believe that ``averaging" the fine and coarse step sizes removes problematic outliers in our simulations.

    In the rightmost panel, we implement the RM method, varying the time step ratio $k$.
    Notice that the case $k = 1$ coincides with the FE method.
    As the time step ratio increases, the computation time decreases, while the solutions become less accurate.
    As in all other experiments, we found that setting $k = 2$ led to optimal results in the case of the RM method.
    Based on these simulations, unless otherwise noted, in all of the following simulations we use the following parameters:
    \begin{enumerate}
        \item In the case of the RB method, we use two batches, i.e. $p = N /2$.
        \item In the case of the RM method, 50\% of the particles move according to the fine time step, and we set the time step ratio $k = 2$.
    \end{enumerate}

    \begin{figure}
        \centering
        \includegraphics[width=0.35\linewidth]{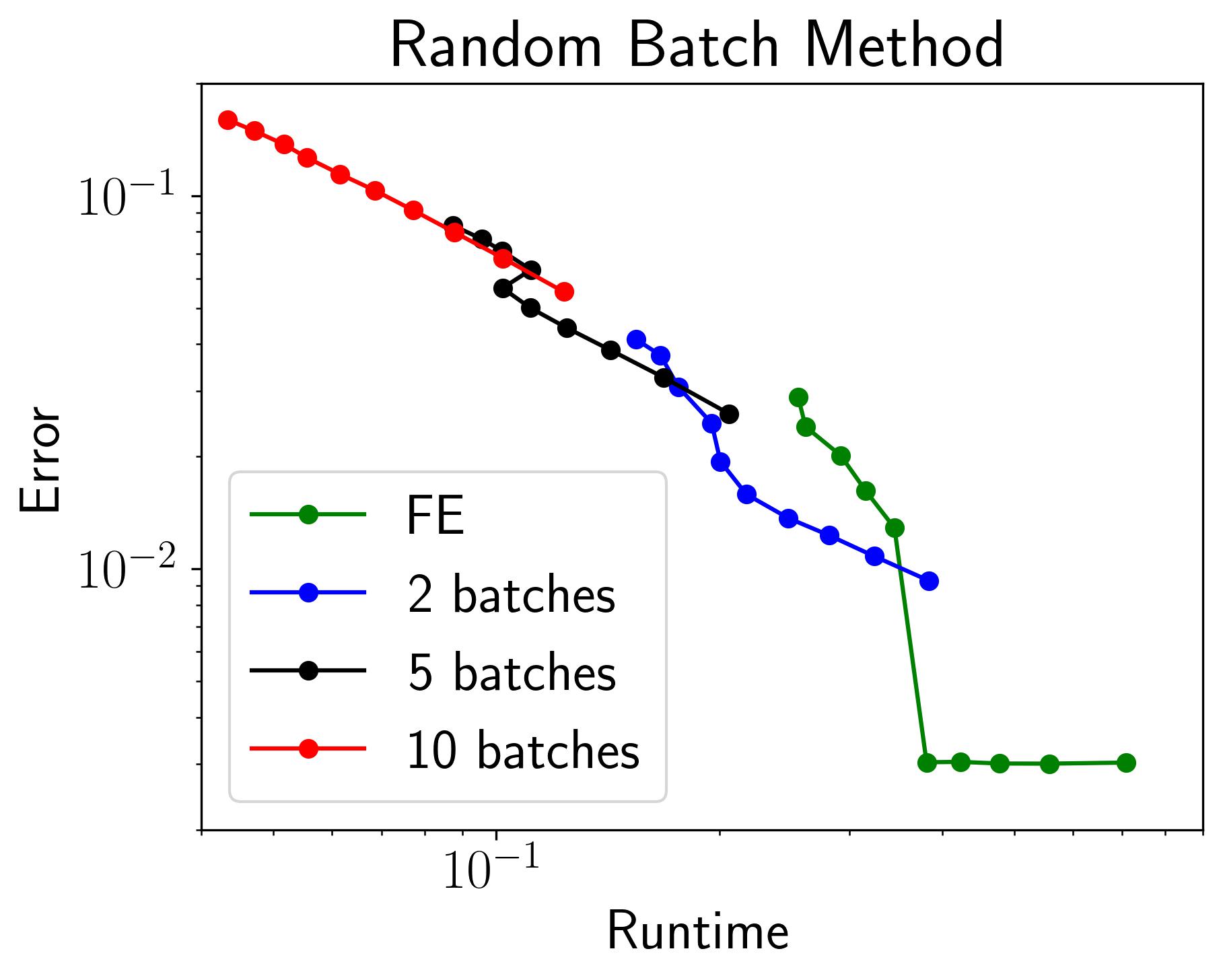}
        \includegraphics[width=0.3\linewidth]{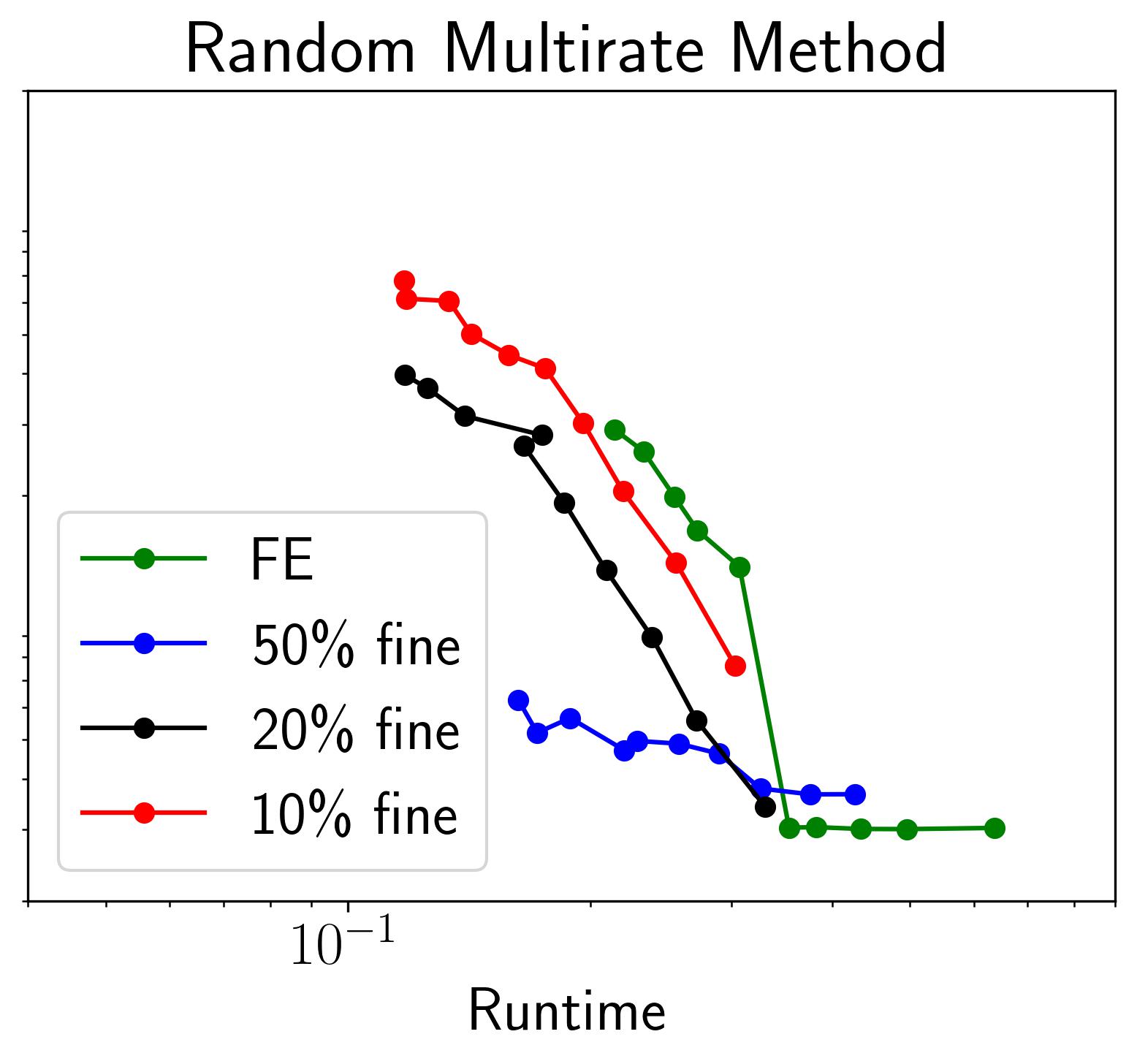}
        \includegraphics[width=0.3\linewidth]{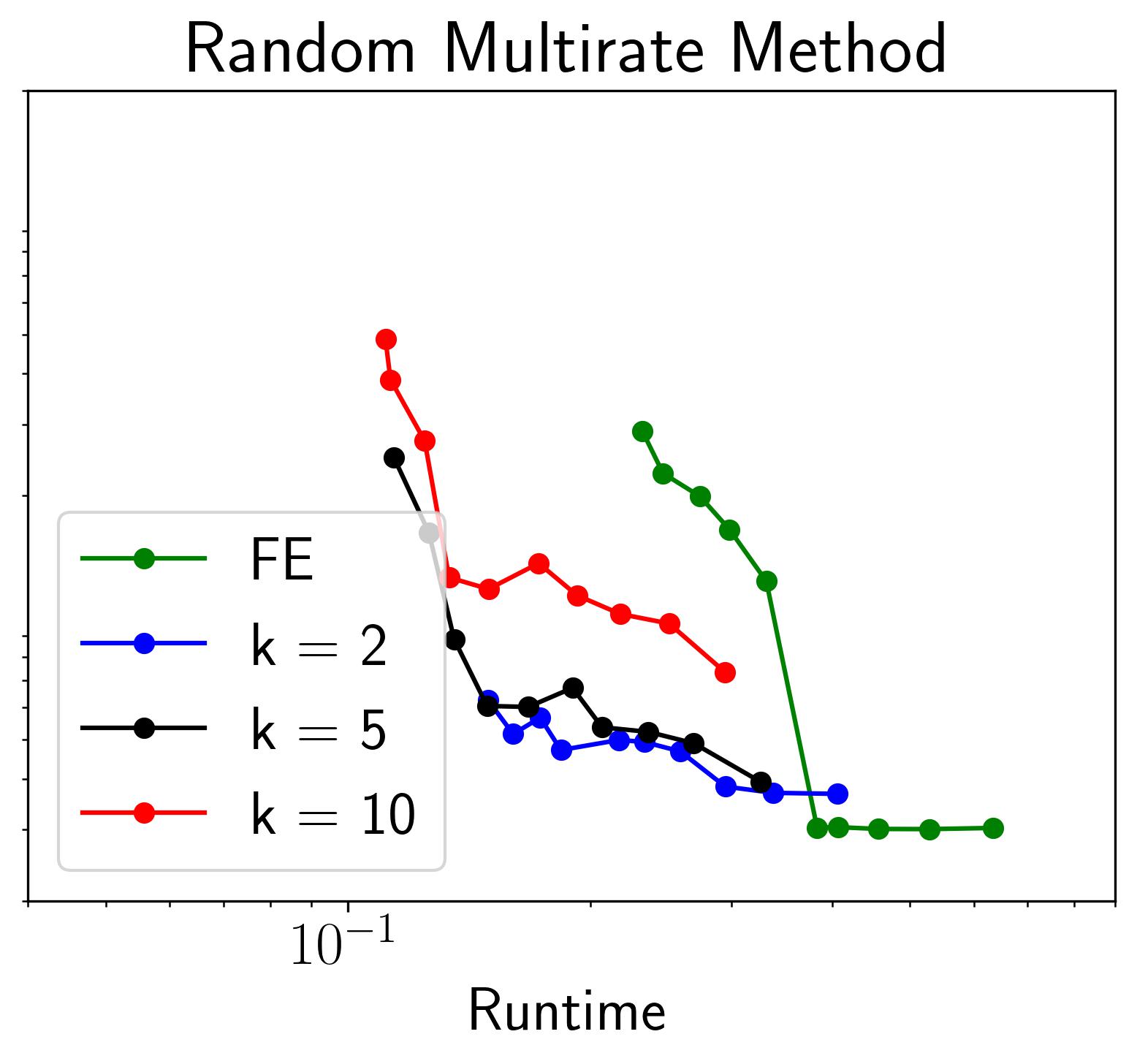}
        \caption{For the RB method, we consider the effect of different batch sizes. For the RM method, we consider both the effect of the proportion of particles moving with fine time step and the effect of time step ratio $k$.}
        \label{fig8}
    \end{figure}
     
\section{One-Dimensional Simulations}

\label{sec:1d}

\subsection{Interacting Particle Systems}

\label{sec:interacting}

\subsubsection{A Non-Diffusion Example}

    \label{sec:toy}
    While the RB and RM method both give a way to efficiently simulate interacting particle systems, the structure of the dynamics impacts the effectiveness of both methods. 
    In order to illustrate the limitations of the RM method, we begin by presenting an interacting particle system that does not arise from a blob method discretization of a diffusive PDE.
    We reproduce Figure 4 in \cite{Jin2019}, adding in data corresponding to the RM method.
    This figure corresponds to an interacting particle system with $N$ particles.
    The trajectory of the $i$th particle is given by the following expression:
    \begin{equation}
        \dot{x}_i(t) = \frac{1}{N - 1} \sum \limits_{j = 1}^N \frac{x_i(t) - x_j(t)}{1 + |x_i(t) - x_j(t)|^2} - x_i(t) .
        \label{eqn9}
    \end{equation}
    The initial conditions of the particles are samples taken from $\rho_0(x) = \frac{\sqrt{4 - x^2}}{2 \pi}$ using acceptance-rejection sampling.
    We calculate a reference solution using the FE method with very fine time step $2^{-15}$.
    Let $\{ \tilde{x}_1(T), \ldots, \tilde{x}_N(T) \}$ equal the reference solution at final time $T$.
    For time steps $2^{-6}, \ldots, 2^{-2}$, we solve the ODE system using the FE, RB, and RM methods.
    These differing solutions are represented by different dots in the figure.
    We assess the accuracy of our method using the error function
    \begin{equation}
        E_N(x_1(T), \ldots, x_N(T)) = \sqrt{\frac{1}{N} \sum_{i = 1}^N |\tilde{x}_i(T) - x_i(T)|^2} .
        \label{eqn10}
    \end{equation}
    We simulate the case $N = 10^2$ (in black), $N = 10^3$ (in blue), and $N = 10^4$ (in red).
    
    As expected, as the time step becomes more fine, the runtime increases, while the error decreases.
    We observe the same behavior for the FE and RB methods as in \cite{Jin2019}; the time savings of the RB method are most apparent in the case of more particles.
    In this example, for a fixed time step, the solution by the RM method behaves very similar to the solution by the FE method. 
    Consequently, we do not observe a large benefit of the RM method for the present ODE.
   
    \begin{figure}
        \centering
        
        \includegraphics[width = .4 \linewidth]{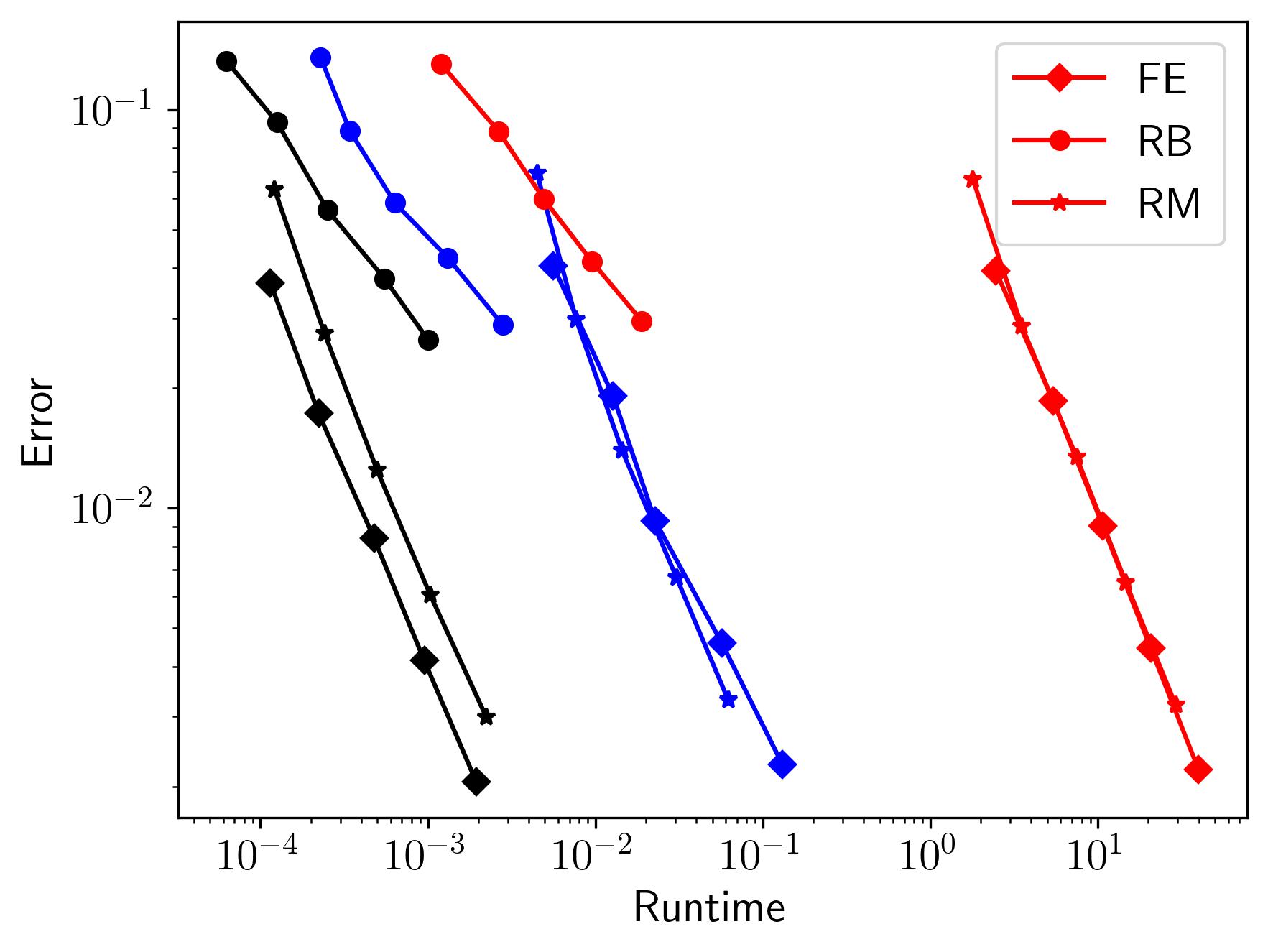} 

        \caption{A non-diffusion example demonstrating the strength of the RB method. We reproduce Figure 4 in \cite{Jin2019}, adding a third line corresponding to the RM method. We simulate the case $N = 10^2$ (in black), $N = 10^3$ (in blue), and $N = 10^4$ (in red).}
        \label{fig1}
    \end{figure}

\subsubsection{A Diffusion-Type Regime}
    \label{sec:diffusion_type}
    The behavior is markedly different in the diffusion regime, where we observe a large benefit of the RM method.
    We consider the system \eqref{eqn:xdot}, which arises from a blob method discretization of a diffusive PDE.
    More specifically, particles move according to system $\eqref{eqn:xdot}$, where $f$ is given by equation $\eqref{eqn1}$ with $m = 2$ and $v(x) = \frac{x}{3}$.
    As in Section \ref{sec:toy}, we fix $N$, and for various time steps, we calculate the particle solution up to time $T = 1$ using the FE, RM, and RB methods.
    For each of the three methods, we simulate each time step in expression \eqref{eqn:t_steps}, which are chosen to highlight the regime of largest variation in the performance of FE method. 
    We simulate the case $N = 521$ ($h = 0.01$, in black), $N = 1041$ ($h = 0.005$, in blue), and $N = 2083$ ($h = 0.0025$, in red).
    
    In Figure \ref{fig6}, we use two different approaches to assess the accuracy of our particle solutions.
    In the left panel, as in Section \ref{sec:toy}, we use equation \eqref{eqn10} to compare the particle solution to a reference solution calculated using fine time step $10^{-4}$.
    In the right panel, we approximate the 2-Wasserstein distance between the particle solution and the exact solution at time $T = 1$, as described in Section \ref{sec:error_and_runtime}.
  
    In both cases, we see that the error for the FE method is relatively constant once the time step is sufficiently fine. 
    Interestingly, in certain cases, for a fixed number of particles and time step, using the RM method is \emph{more accurate} than using the FE method.
    We attribute this to inherent ``averaging" of two different time steps in the RM regime, which may preserve stability.
    Throughout this work, we continue to observe this trend, especially in the slow diffusion regime.
    
    The RB method struggles in comparison to the reference particle solution, but performs favorably in comparison with the exact continuum solution.
    Similar to Figure \ref{fig1}, we see the benefits of the RB method most clearly in the case of more particles.

    \begin{figure}
        \centering
        \includegraphics[width=0.4\linewidth]{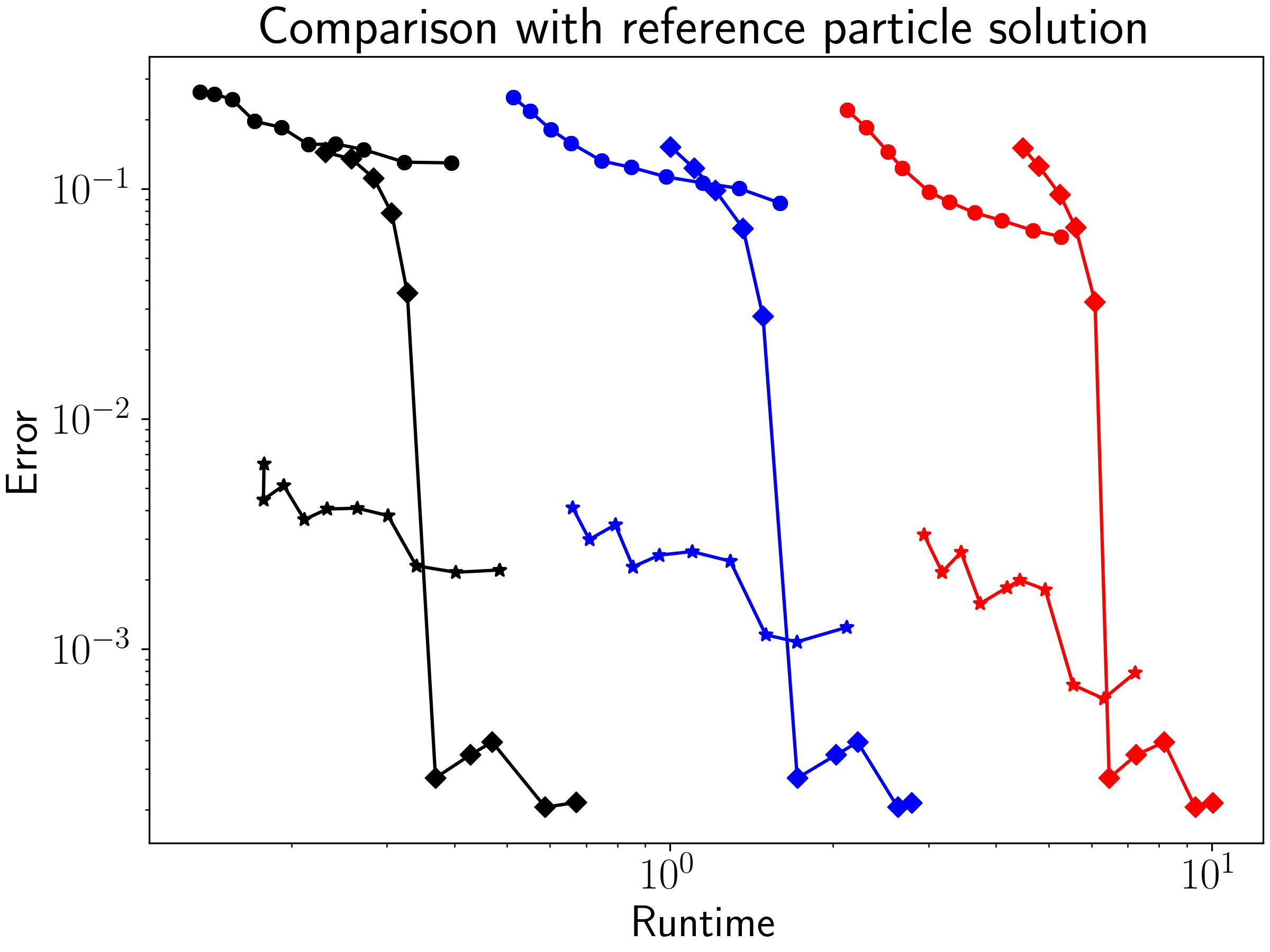}
        \includegraphics[width=0.475\linewidth]{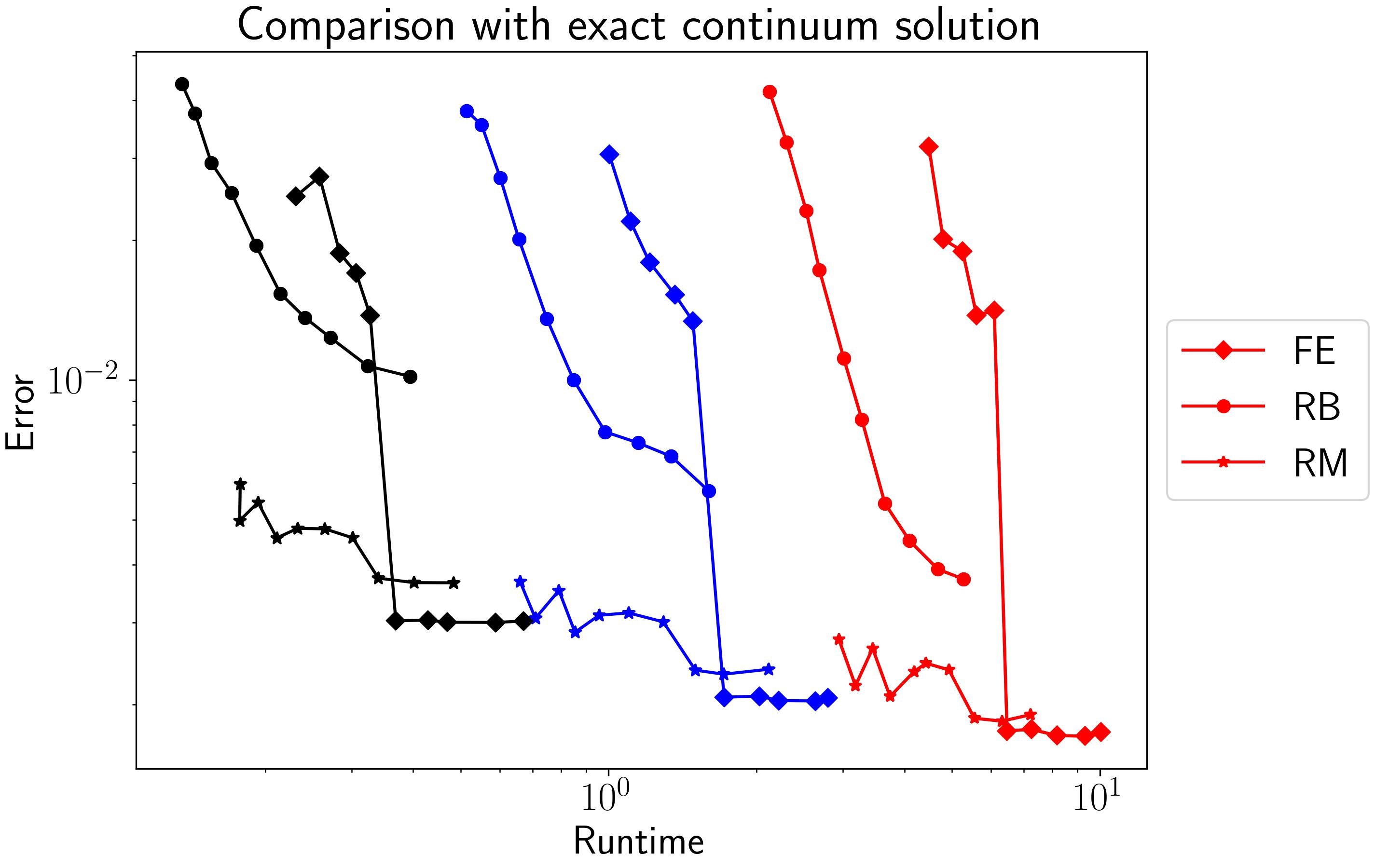}
        \caption{A diffusion-type regime demonstrating the strength of the RM method. We simulate the $d = 1$ nonlinear diffusion equation with $m = 2$ and external velocity field $v(x) = \frac{x}{3}$. The left panel measures error with respect to a reference particle solution, while the right panel measures error with respect to the exact continuum solution. We simulate the case $N = 521$ (in black), $N = 1041$ (in blue), and $N = 2083$ (in red).}
        \label{fig6}
    \end{figure}

    \subsection{Diffusion Equations: Dynamics}
    \label{sec:diffusion_exact}

    We now consider the RB and RM methods through the lens of diffusion equations, noting how the type of diffusion and external velocity field affects these methods.
    In particular, we consider system \eqref{eqn:xdot}, where $f$ is given by equation \eqref{eqn1}; we vary $m$ and the external velocity field $v$.
    In the case $v = 0$, we choose $h = 0.005$ and final time $T = 0.1$; in the case $v = \nabla V$ for quadratic external potential $V$, we choose $h = 0.01$ and final time $T = 1$.
    In Figure \ref{fig12}, for each of these choices of $V$, we plot the exact continuum solution and the particle solution using the FE method in the case $m = 2$.
    Visually, we observe that the particle solution accurately models the exact continuum solution.

   \begin{figure}
       \centering
       \includegraphics[width=0.4\linewidth]{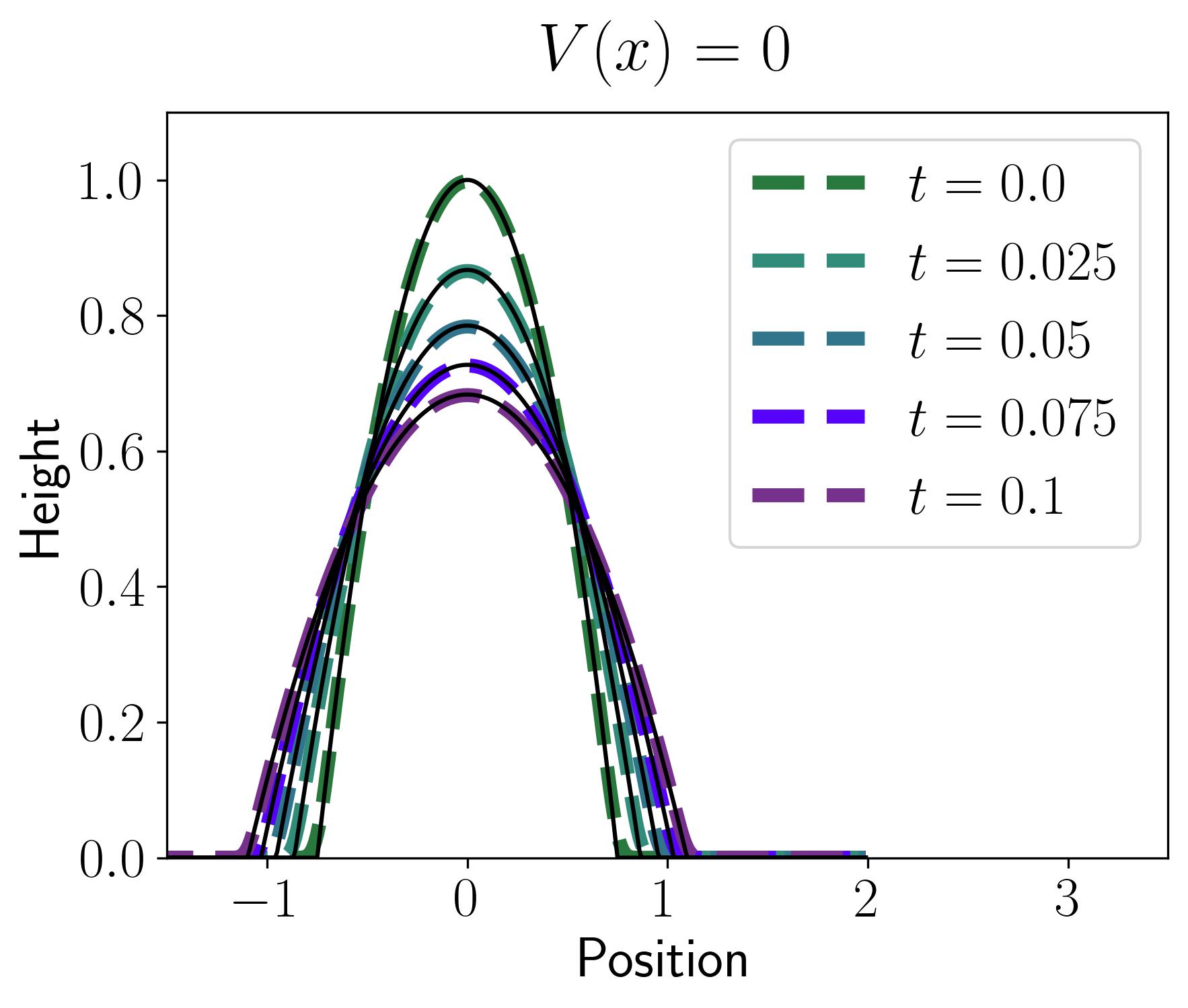}
       \includegraphics[width=0.3825\linewidth]{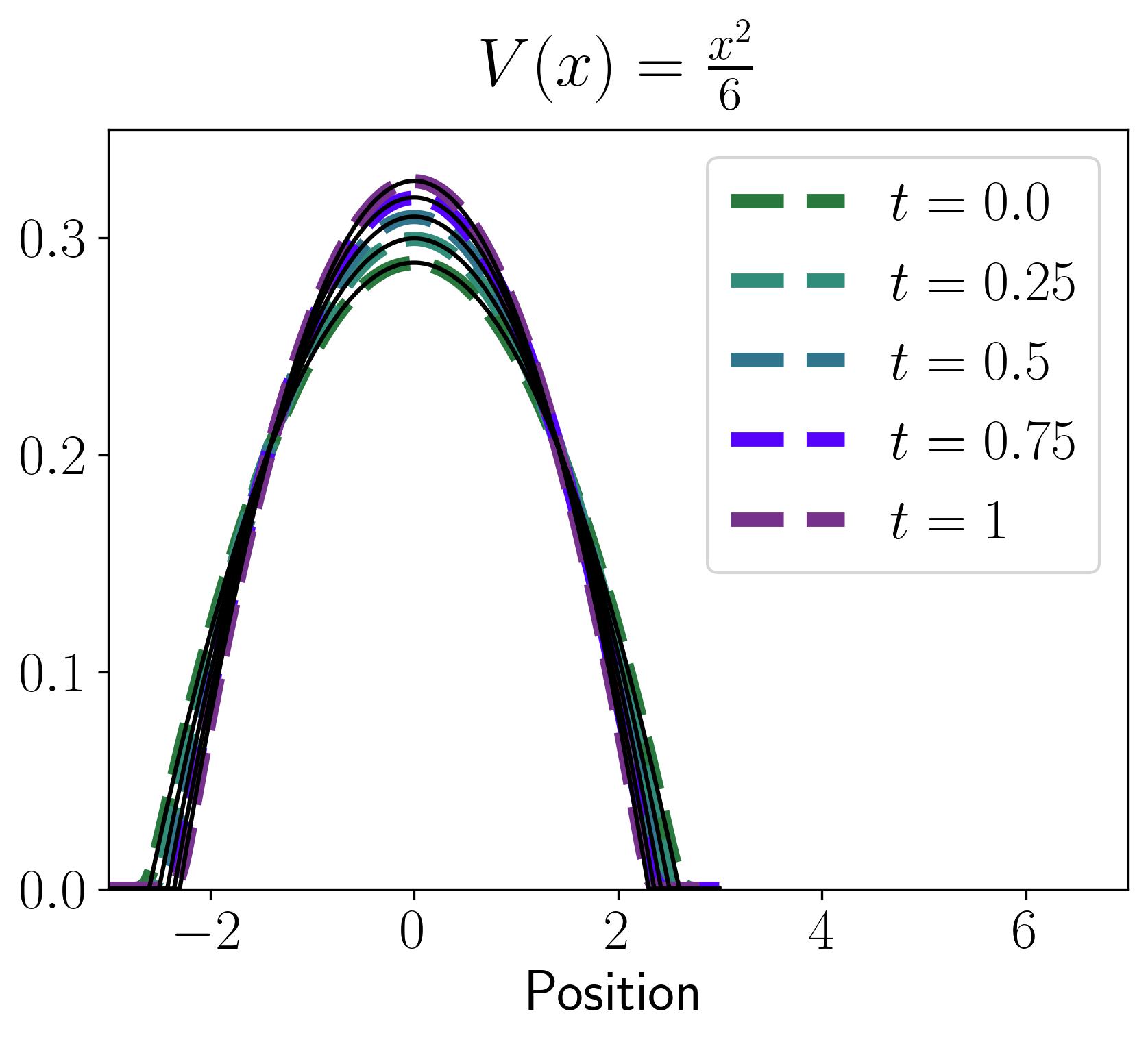}
       \caption{Particle and exact continuum solutions to the $d = 1$ nonlinear diffusion equation with $m = 2$ and two different external potentials $V(x)$. Solid black lines correspond to the exact continuum solution, while dashed lines correspond to the particle solution.}
       \label{fig12}
   \end{figure}
    
    Figure \ref{fig2} demonstrates the effect of time step on the accuracy of the particle solution for the FE, RB, and RM methods.
    For different choices of time step, indicated by different dots in the figure, we run the particle method up to some final time $T$, and approximate the 2-Wasserstein distance between the particle solution and the exact continuum solution at time $T$. 
    In the case $V(x) = 0$, we take $h = 0.005$.
    In order to achieve comparable runtimes, in the case of a quadratic external potential, we take $h = 0.01$ for $m < 5$, $h = 0.005$ for $m = 5$.  

    For $m \geq 1$, we consider the average error and runtime over ten different random seeds to ensure our results are robust against random variations.
    On the other hand, in the case $m = \frac{3}{4}$, we only consider one random seed due to the large computational time required for each individual simulation. 
    
    Notice that as $m$ increases, the FE method exhibits more pronounced ``drops:" once the time step is sufficiently fine, the error of the FE method decreases sharply. 
    On the other hand, the RB method does not exhibit drops.
    Instead, as $m$ increases, the RB method becomes more sensitive to the choice of time step.
    For $m < 1$, the error is almost constant, while for $m$ large, the error decreases as the step size becomes finer.
    Finally, the RM method also displays more gradual decreases in error.
    Its relative accuracy compared to the two other methods is most apparent for large $m$.

    \begin{figure}[htbp]
        
        \raggedright \boxed{V(x) = 0,\; T = 0.1} \\
        \centering
        \includegraphics[width=.29\linewidth]{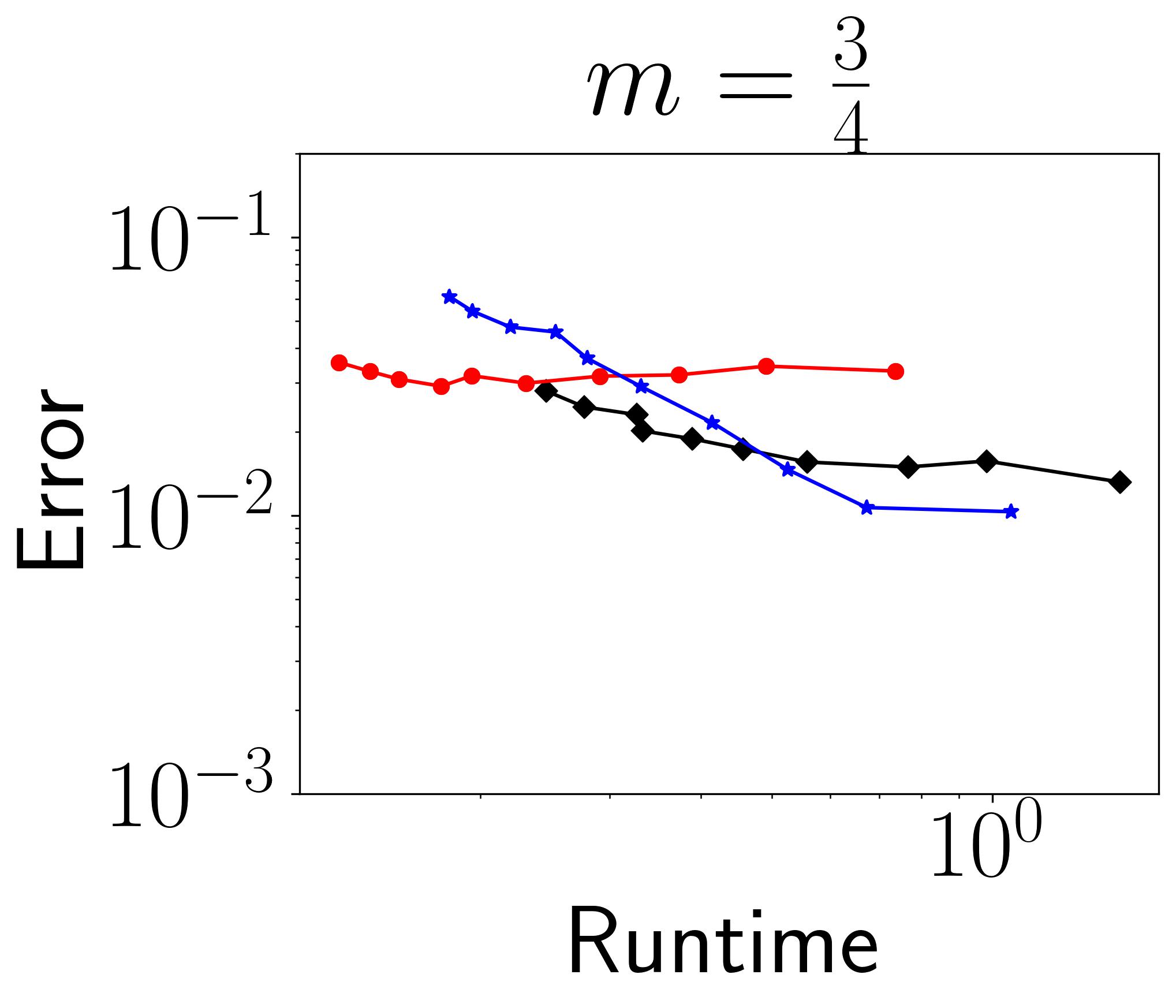}
        \includegraphics[width=.23\linewidth]{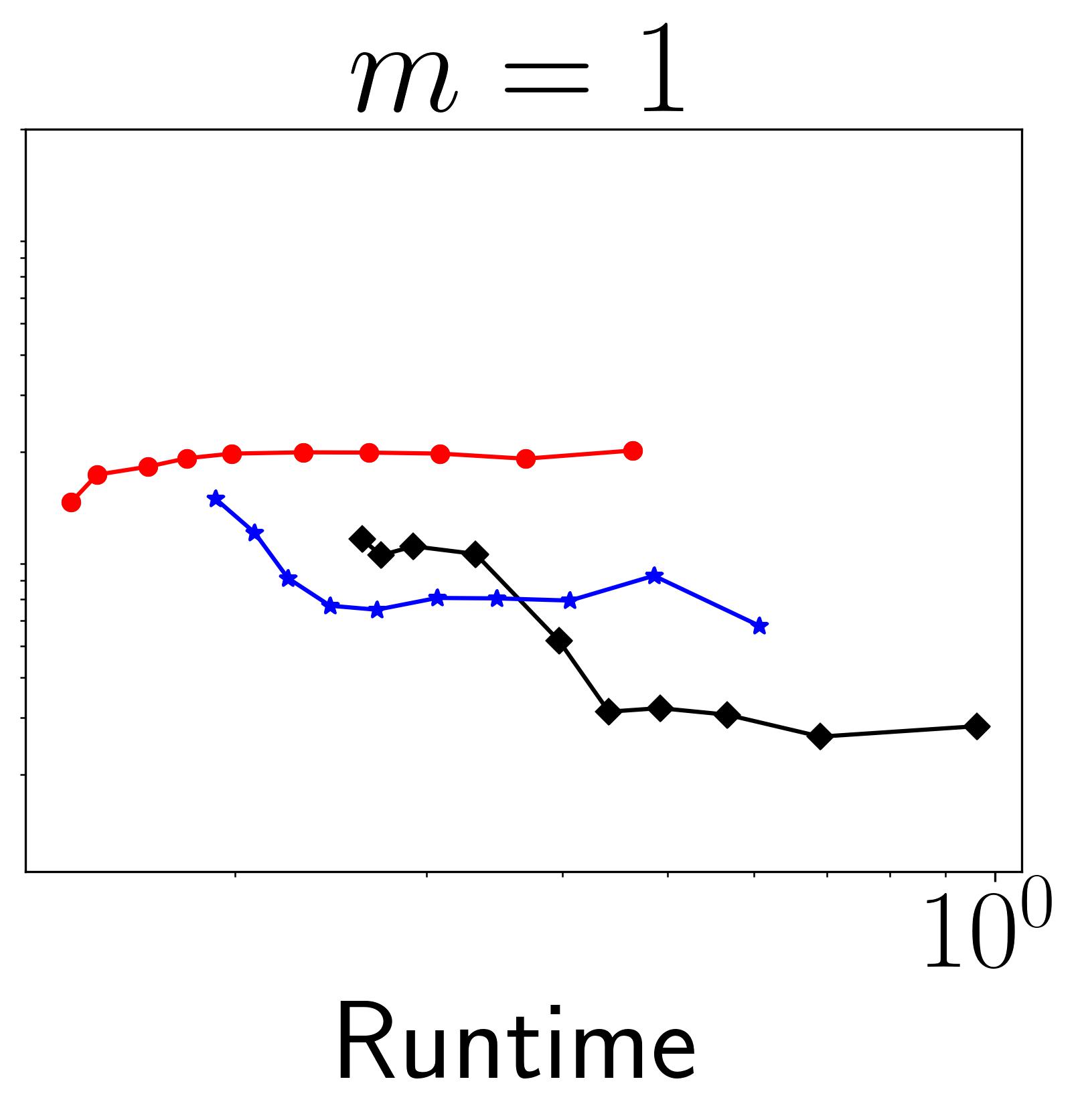}
        \includegraphics[width=.22\linewidth]{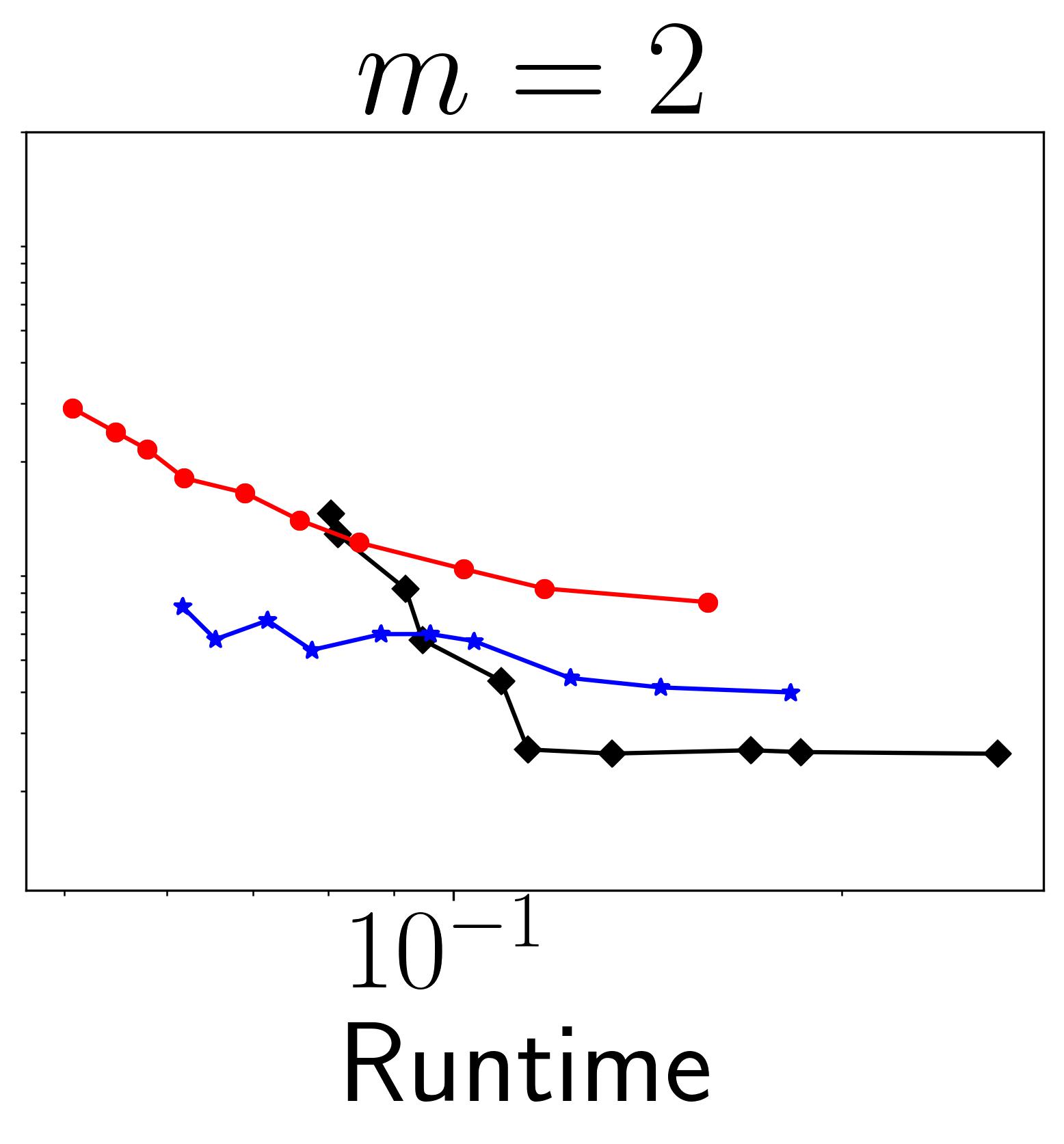}
        \includegraphics[width=.22\linewidth]{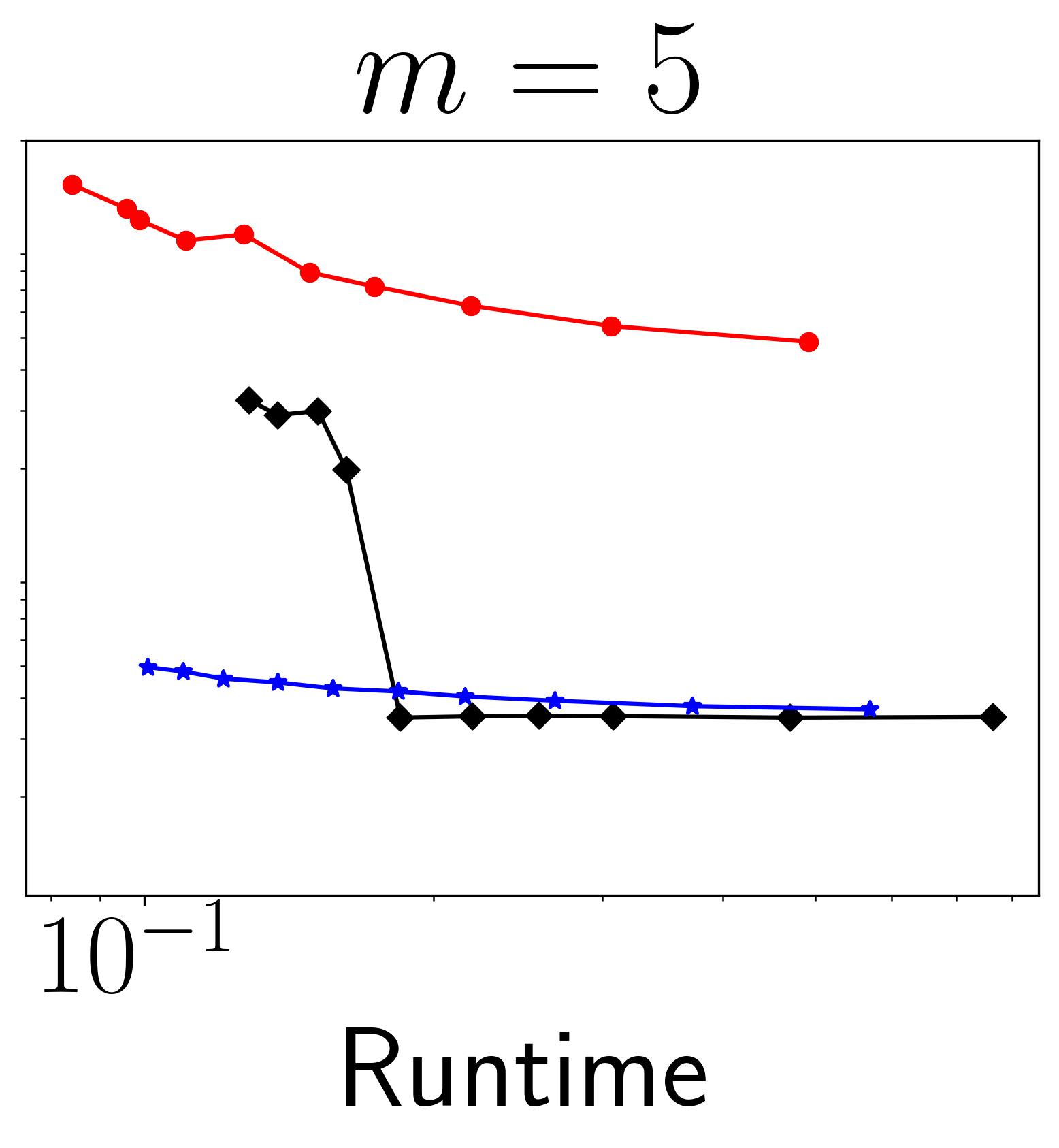}

        \raggedright \boxed{V(x) = \frac{x^2}{2(m+1)},\; T = 1} \\
        \centering
        \includegraphics[width=.29\linewidth]{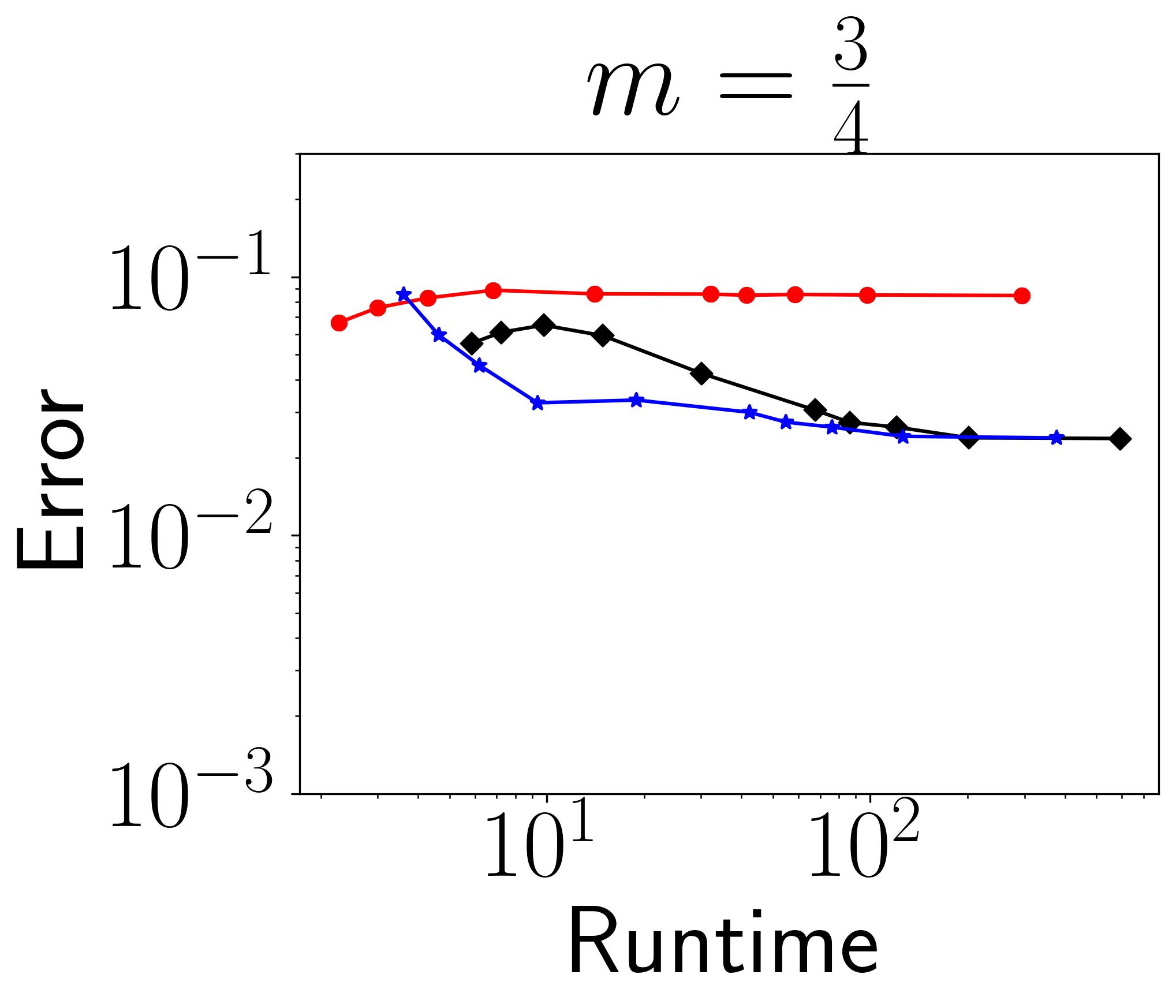}
        \includegraphics[width=.225\linewidth]{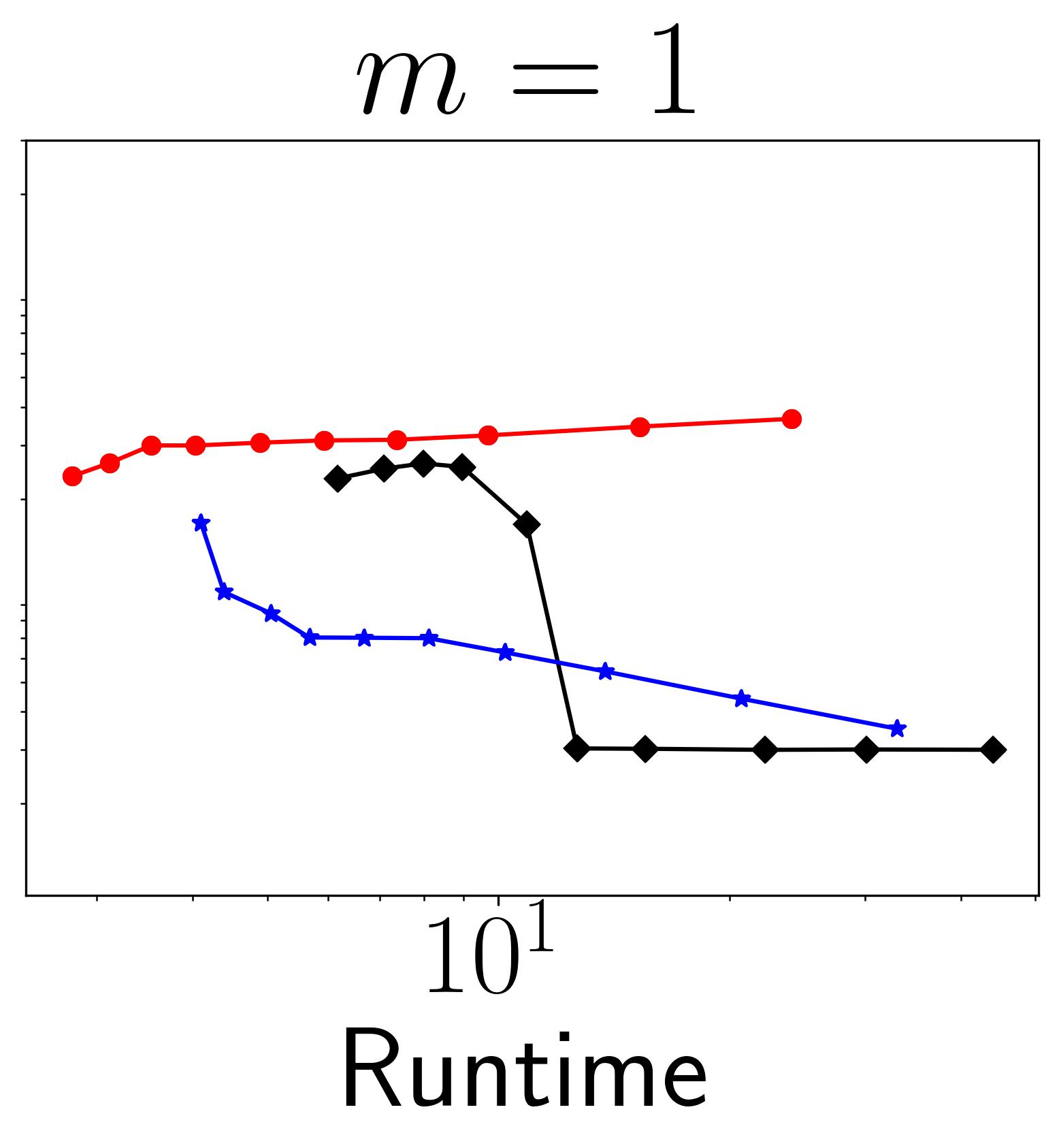}
        \includegraphics[width=.225\linewidth]{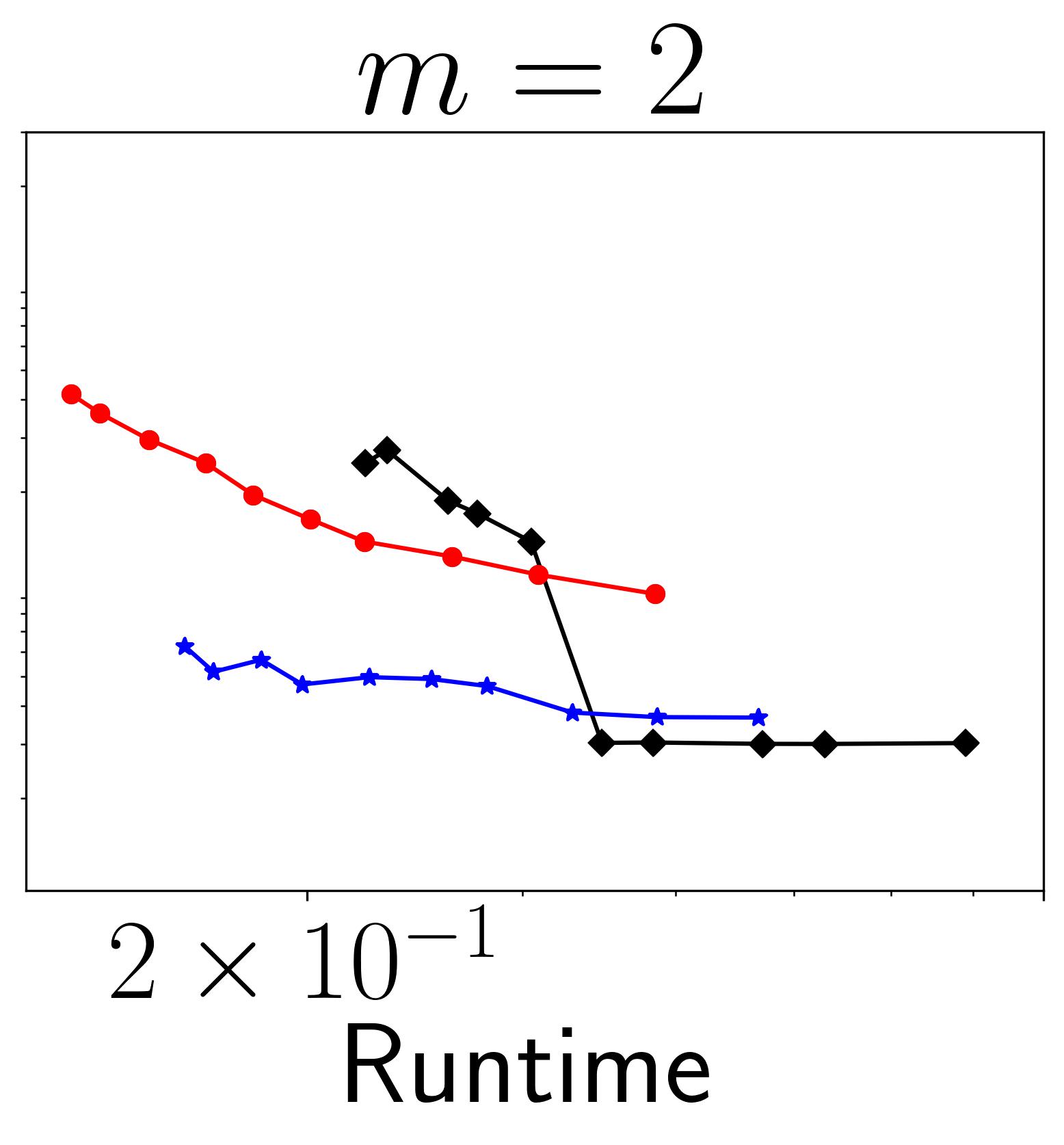}
        \includegraphics[width=.225\linewidth]{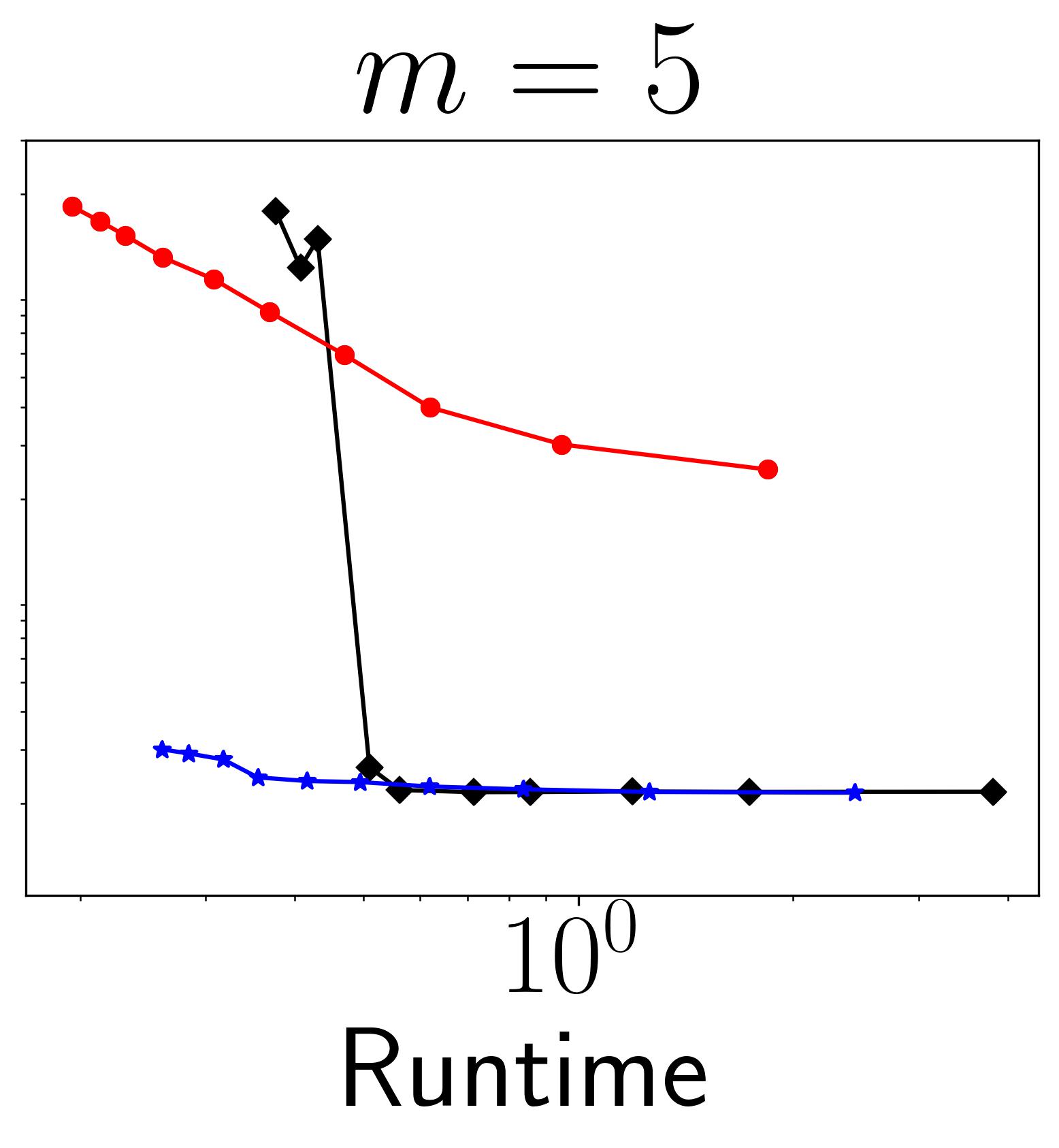}

        \raggedright \includegraphics[width = .4 \linewidth]{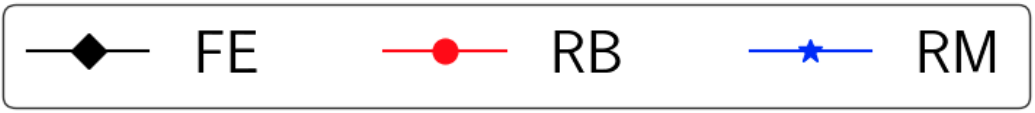}

        \caption{Error between exact continuum solution and particle solution. We simulate the nonlinear diffusion equation for different choices of $m$ and external potential $V(x)$.}
        \label{fig2}
    \end{figure}
      
  \subsection{Diffusion Equations: Longtime Behavior}
    \label{sec:diffusion_longtime}
    In this section, we investigate the longtime behavior of the particle system solved using the FE, RB, and RM methods.
    As mentioned in the introduction, for certain choices of $f$, $v$, and $\rho_0$, the solution of equation \eqref{eqn:PDE} converges exponentially quickly to some longtime steady state.
    In this section, we determine which of these three methods preserve this longtime behavior.
    
    In Figure \ref{fig3}, we repeat the simulations in Figure \ref{fig2}, except we run the simulation to a larger final time, and consider the 2-Wasserstein distance between the particle solution at final time $T$ and the exact continuum longtime behavior.
    In the first row, we consider a quadratic external potential, and in the second row, we consider a ``double well" external potential.  
    In all simulations, we take $h = 0.01$. 
    For $m \geq 1$, we consider the average error and runtime over ten different random seeds. 
    For $m < 1$, we only consider one random seed.
    

    We see the greatest contrast between figures \ref{fig2} and \ref{fig3} in the case of the RB method.
    In Figure \ref{fig2}, when comparing the particle solution and exact continuum solution at some relatively small final time $T$, the FE method outperforms the RB method.
    In Figure \ref{fig3}, when comparing the particle solution and theoretic longtime solution at some relatively large final time $T$, the RB method outperforms the FE method in certain cases.
    This phenomenon is most apparent in the case $m = 2$ and $m = 5$.
    Despite these differences, we see that the RM method again exhibits an excellent tradeoff between accuracy and efficiency.

    \begin{figure}[htbp]
        \centering
        
        \raggedright \boxed{V(x) = \frac{x^2}{2(m + 1)}} \\
        \centering
        \includegraphics[width=.29\linewidth]{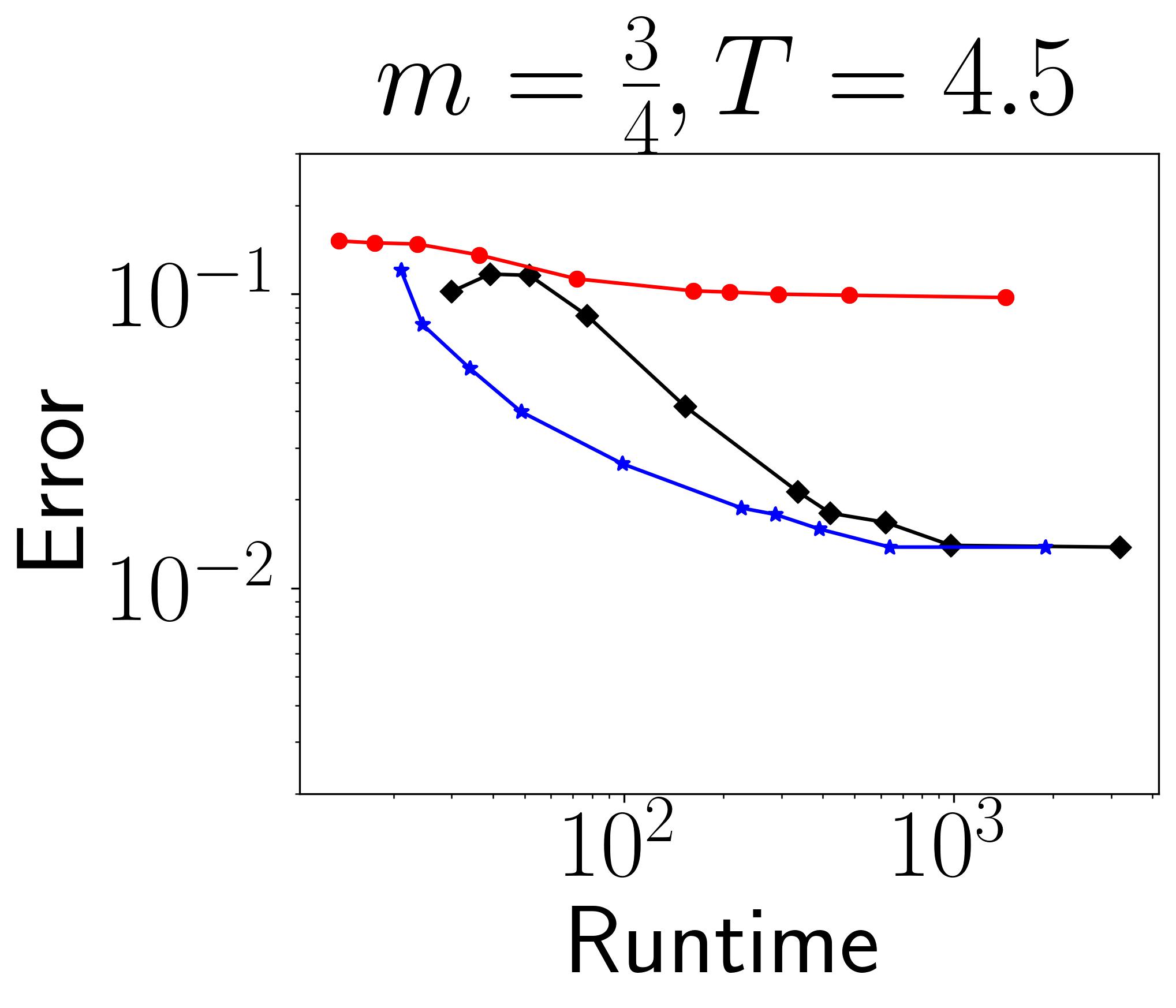}
        \includegraphics[width=.22\linewidth]{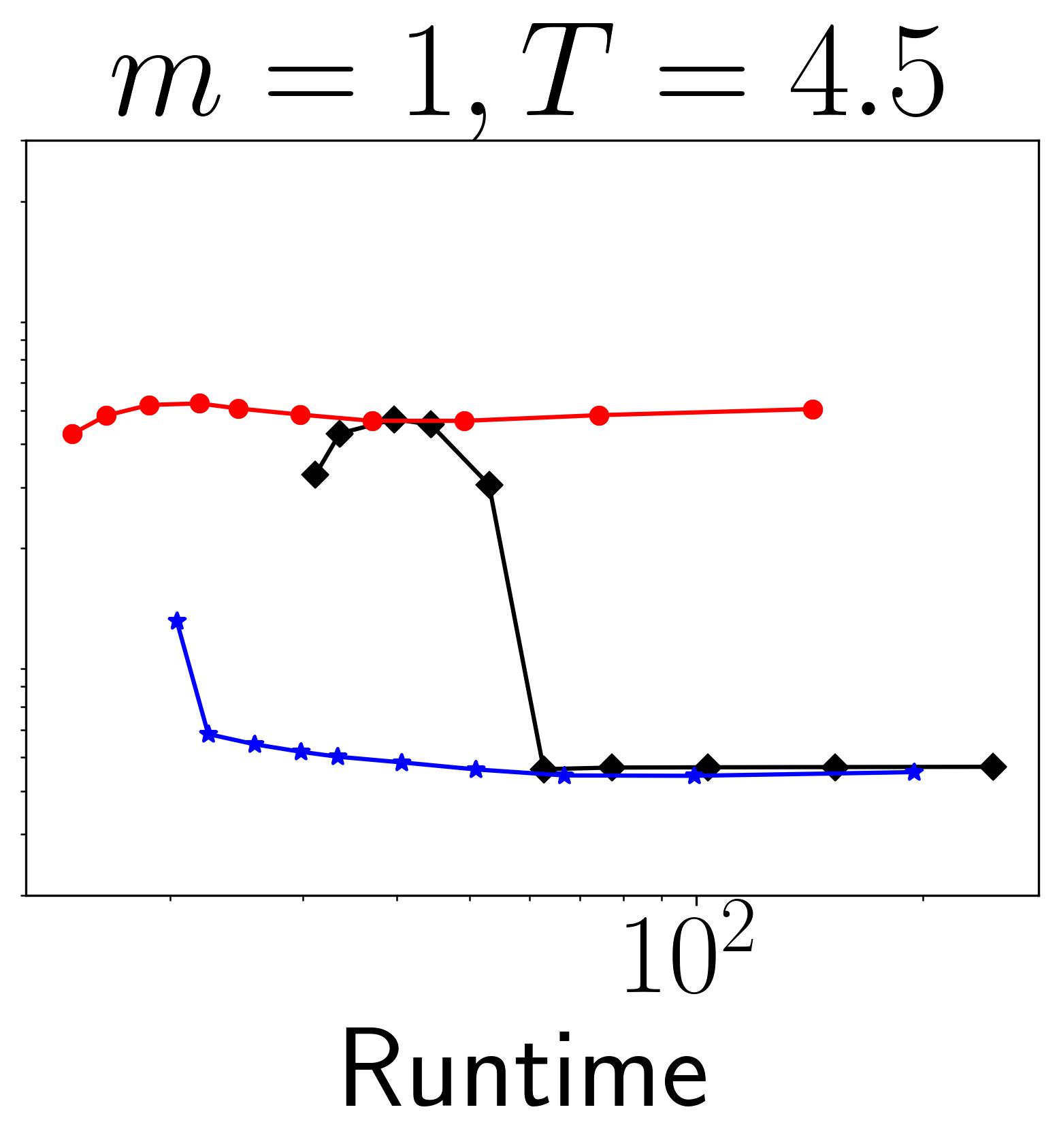}
        \includegraphics[width=.23\linewidth]{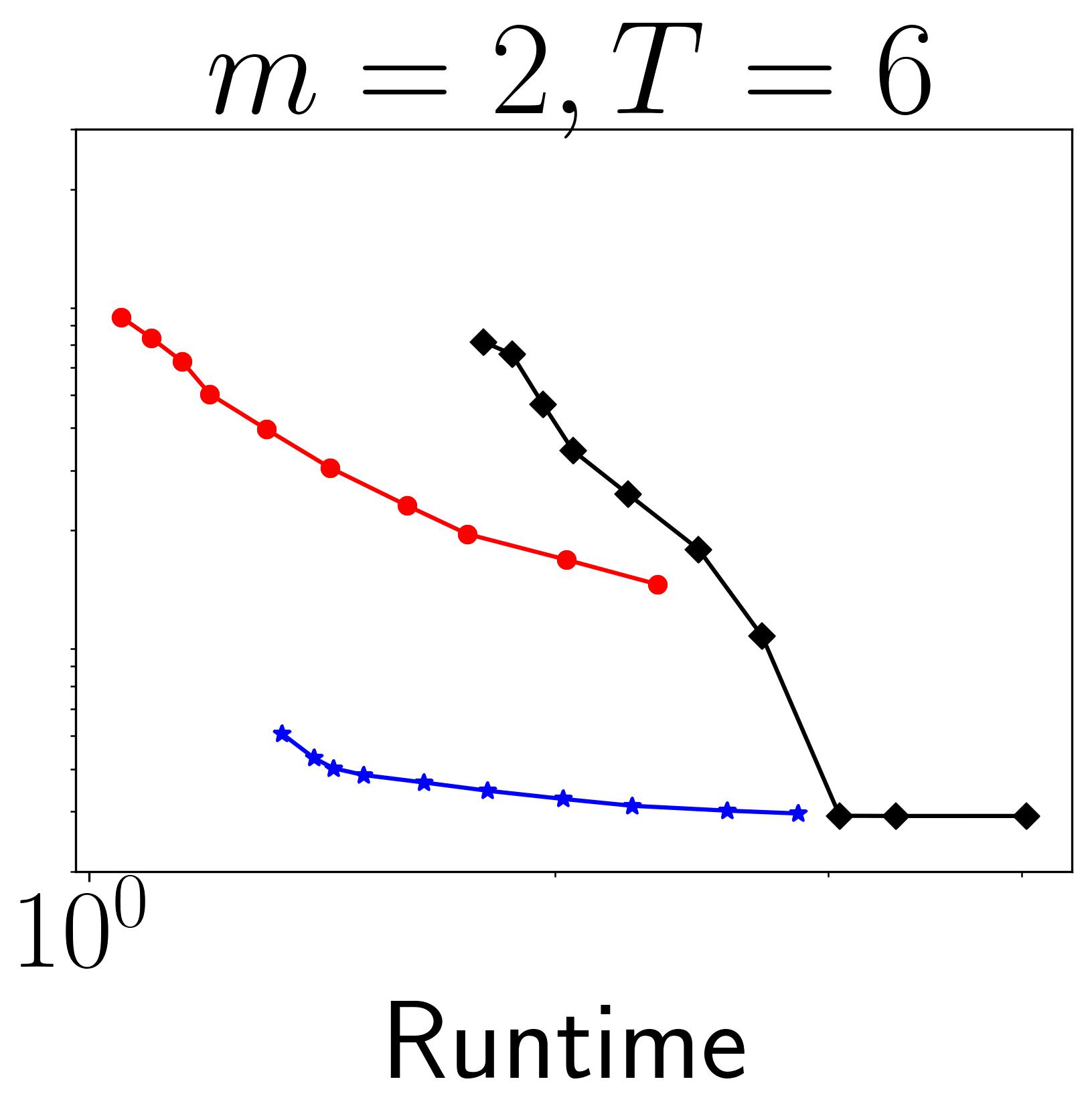}
        \includegraphics[width=.22\linewidth]{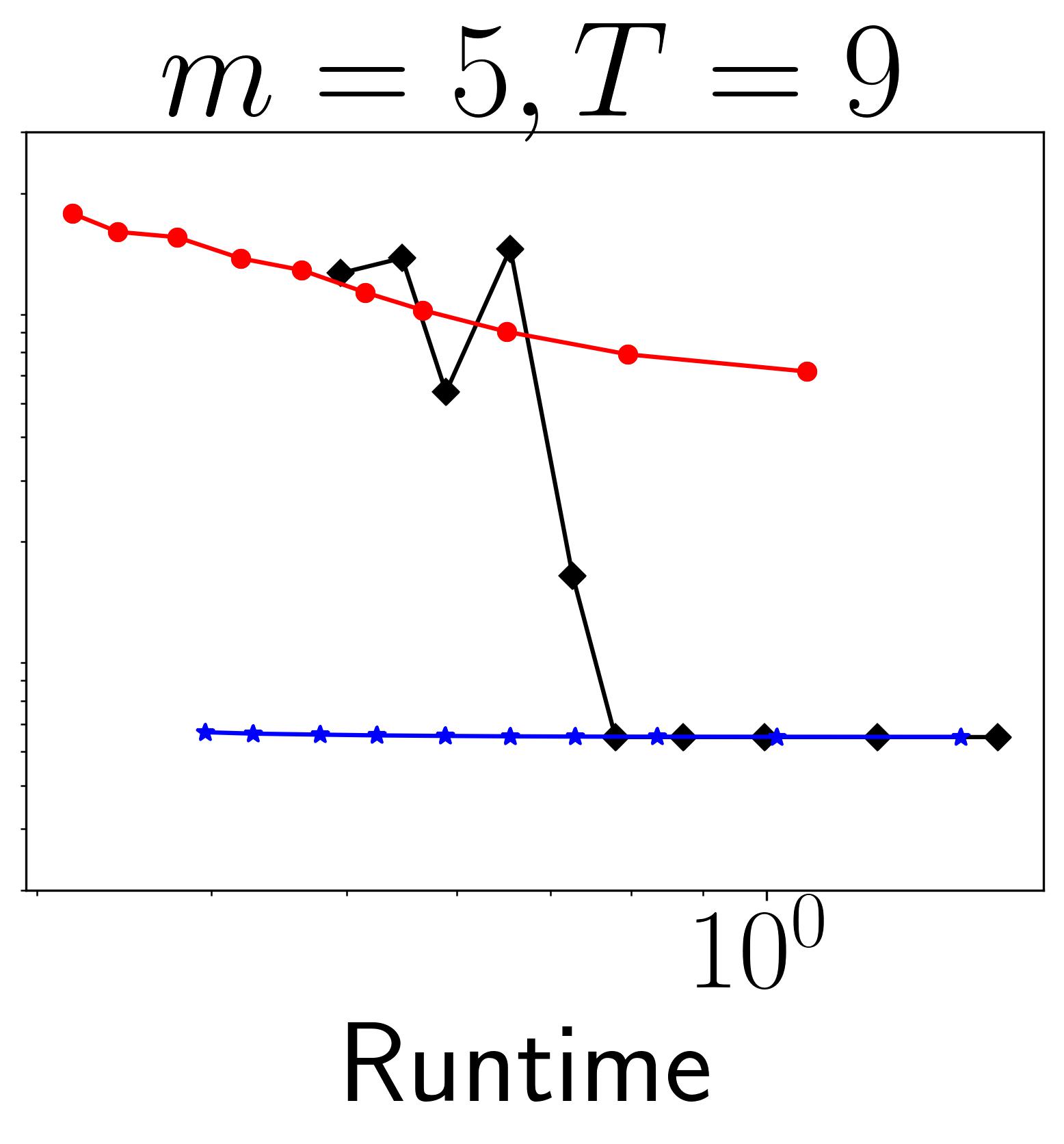}

        \raggedright \boxed{V(x) = (1 - x)^2(1 + x)^2} \\
        \centering
        \includegraphics[width=.29\linewidth]{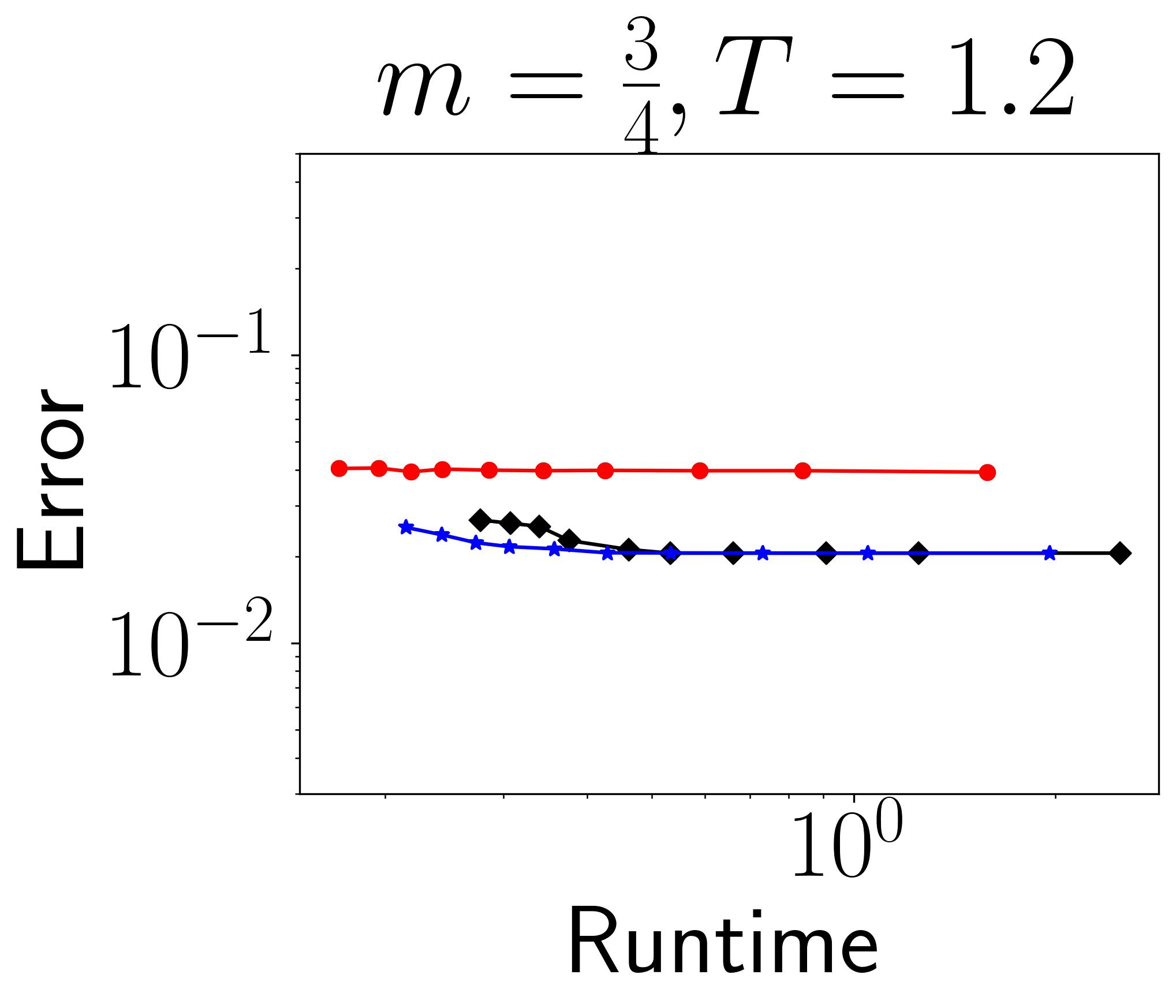}
        \includegraphics[width=.22\linewidth]{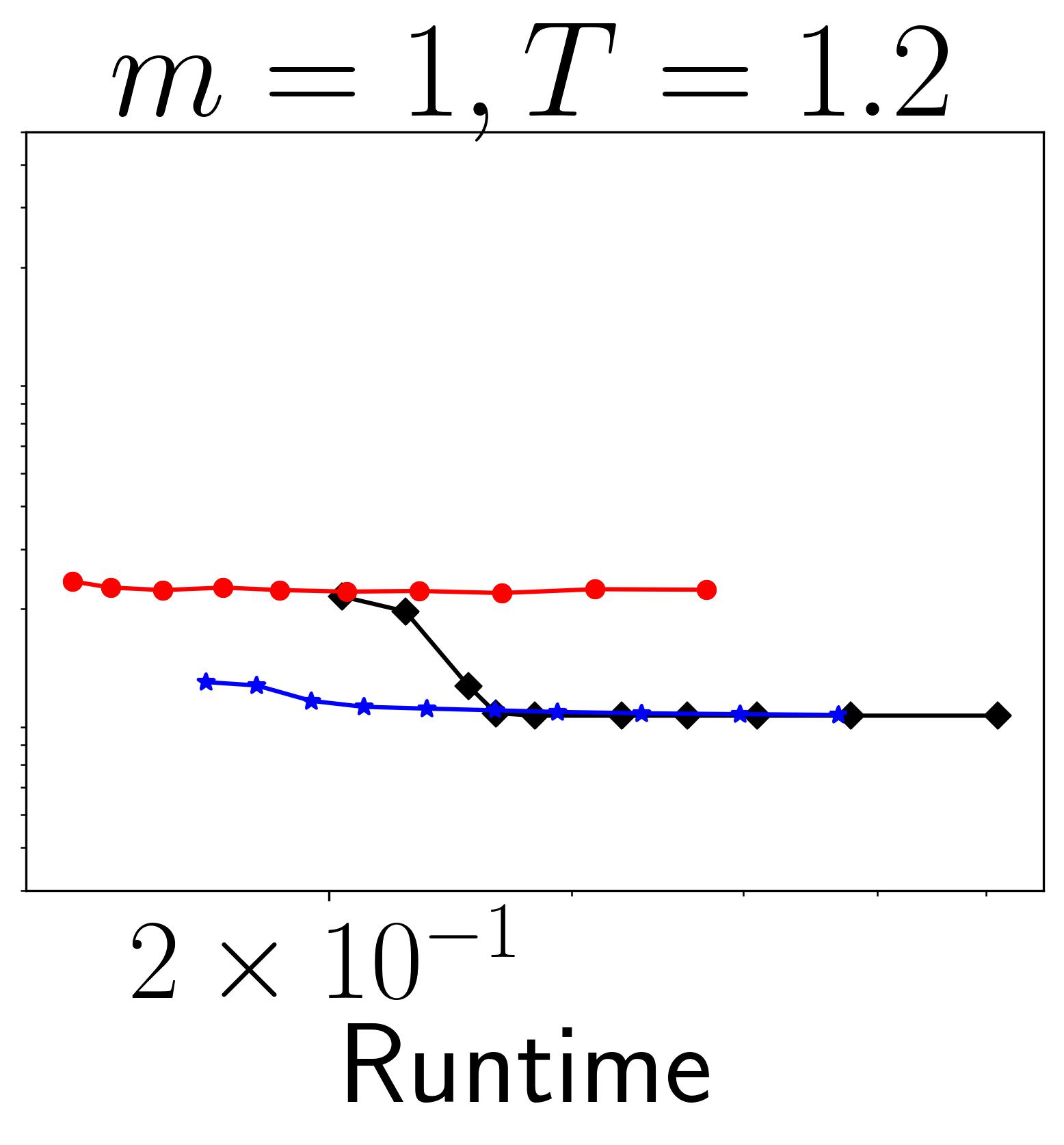}
        \includegraphics[width=.22\linewidth]{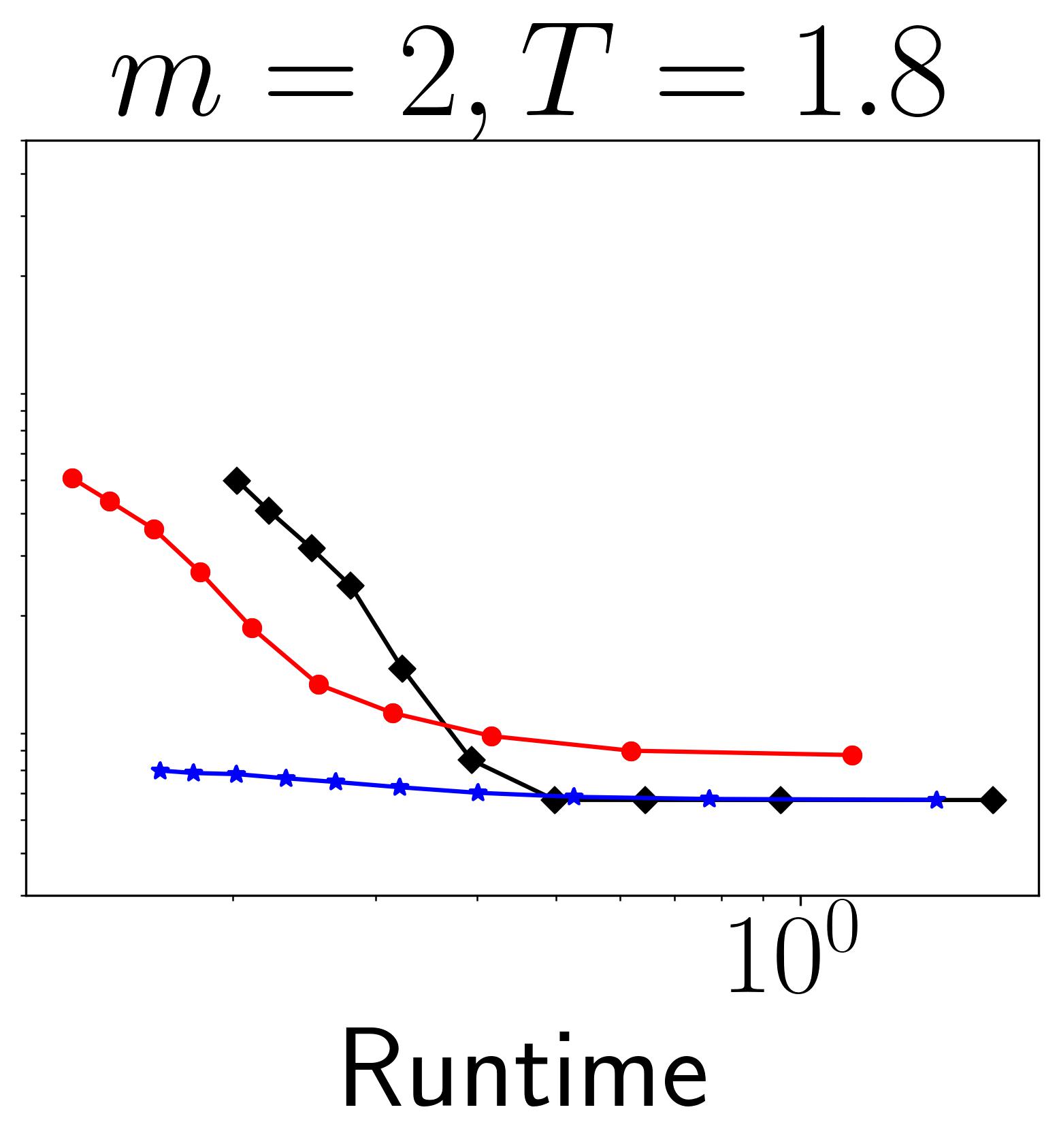}
        \includegraphics[width=.23\linewidth]{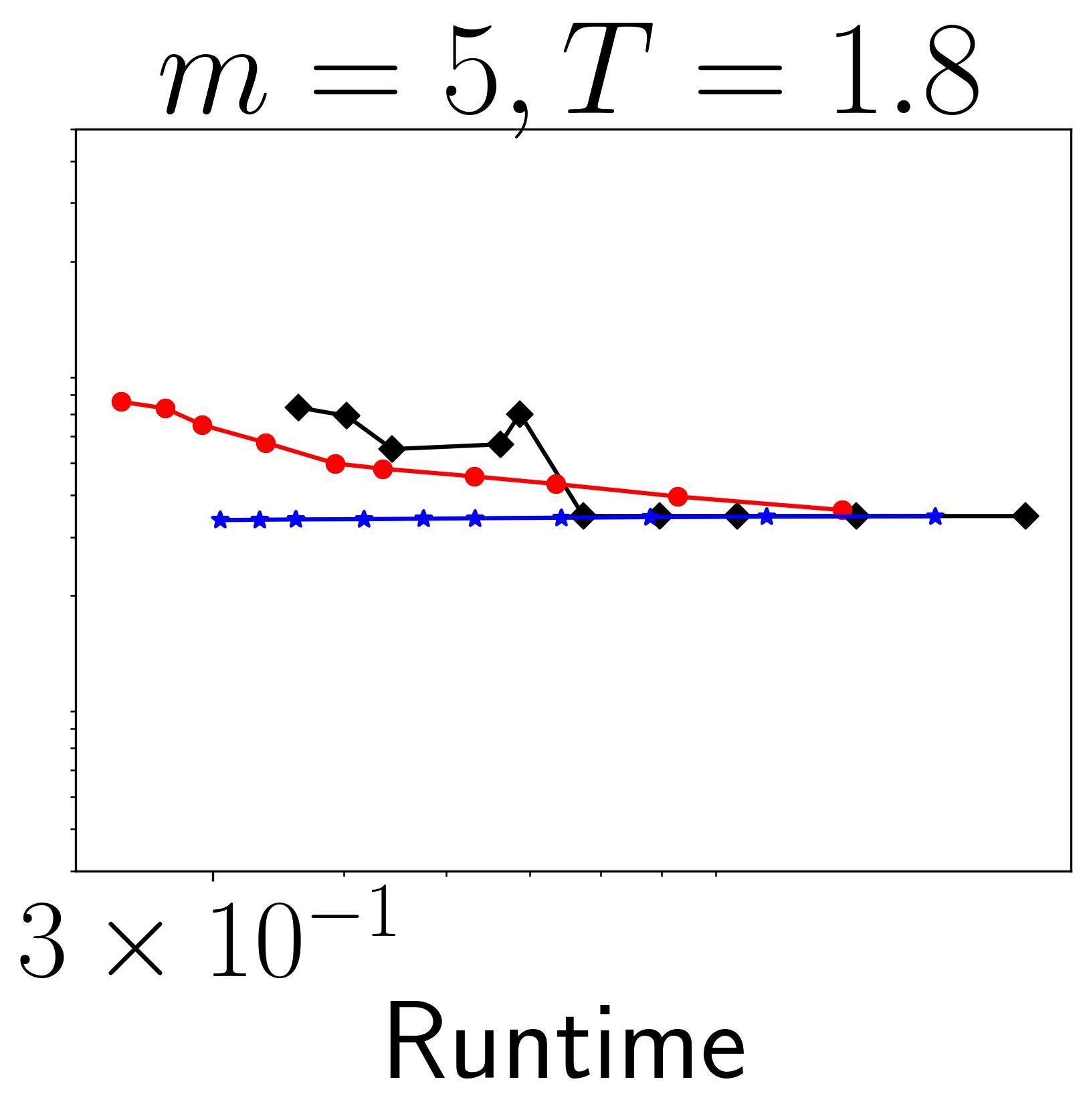}

        \raggedright \includegraphics[width = .4 \linewidth]{legend.png}
        
        \caption{Error between theoretic longtime behavior and particle solution at some sufficiently large final time $T$. We simulate the nonlinear diffusion equation for different choices of $m$ and external potential $V(x)$.}
        \label{fig3}
    \end{figure}
    
 \subsection{Rate of Convergence to Longtime Behavior}
    \label{sec:convergence_rate}
    In the previous section, we verify that the particle solution solved using the RM method converges to the theoretic steady state.
    Now, we demonstrate that the RM method preserves the exponential rate of convergence under the 2-Wasserstein metric to the steady state.
    
    In Figure \ref{fig7}, we simulate the porous medium equation with $m = 2$ and external velocity field $v(x) = \frac{x}{3}$ and compare the particle solution at various times with the theoretic longtime behavior.
    We set $h = 0.01$.
    For two time steps, each corresponding to a different panel in the figure, we calculate the solutions up to time $T = 9$.

    We note that the RM method achieves better accuracy by an order of magnitude.
    If we use the FE method, we need to choose a much finer time step to achieve this accuracy.
    Even in the case of this finer time step, the RB method does not achieve this level of accuracy.


    \begin{figure}
        \centering
        \includegraphics[width=0.4\linewidth]{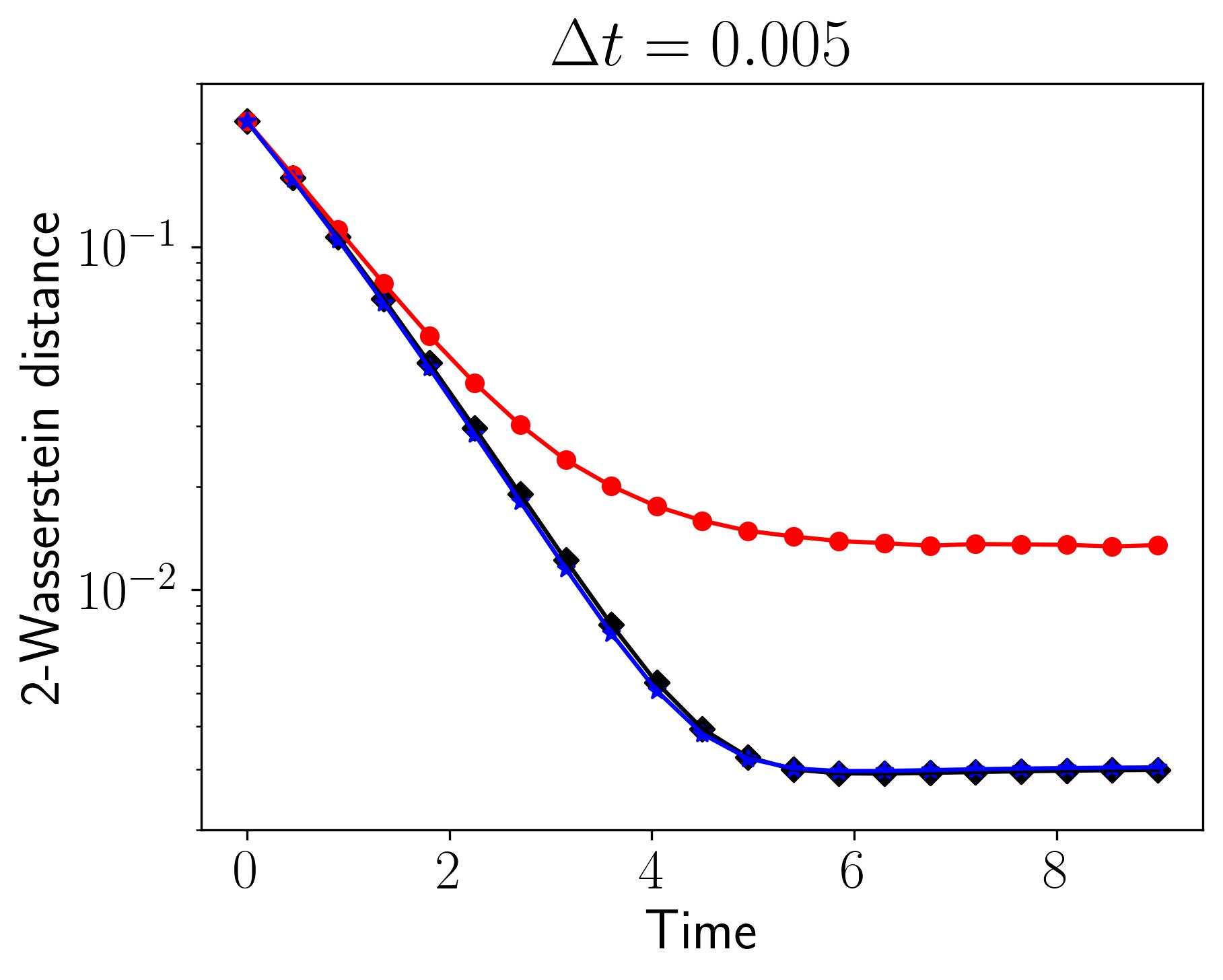}
        \includegraphics[width=0.3425\linewidth]{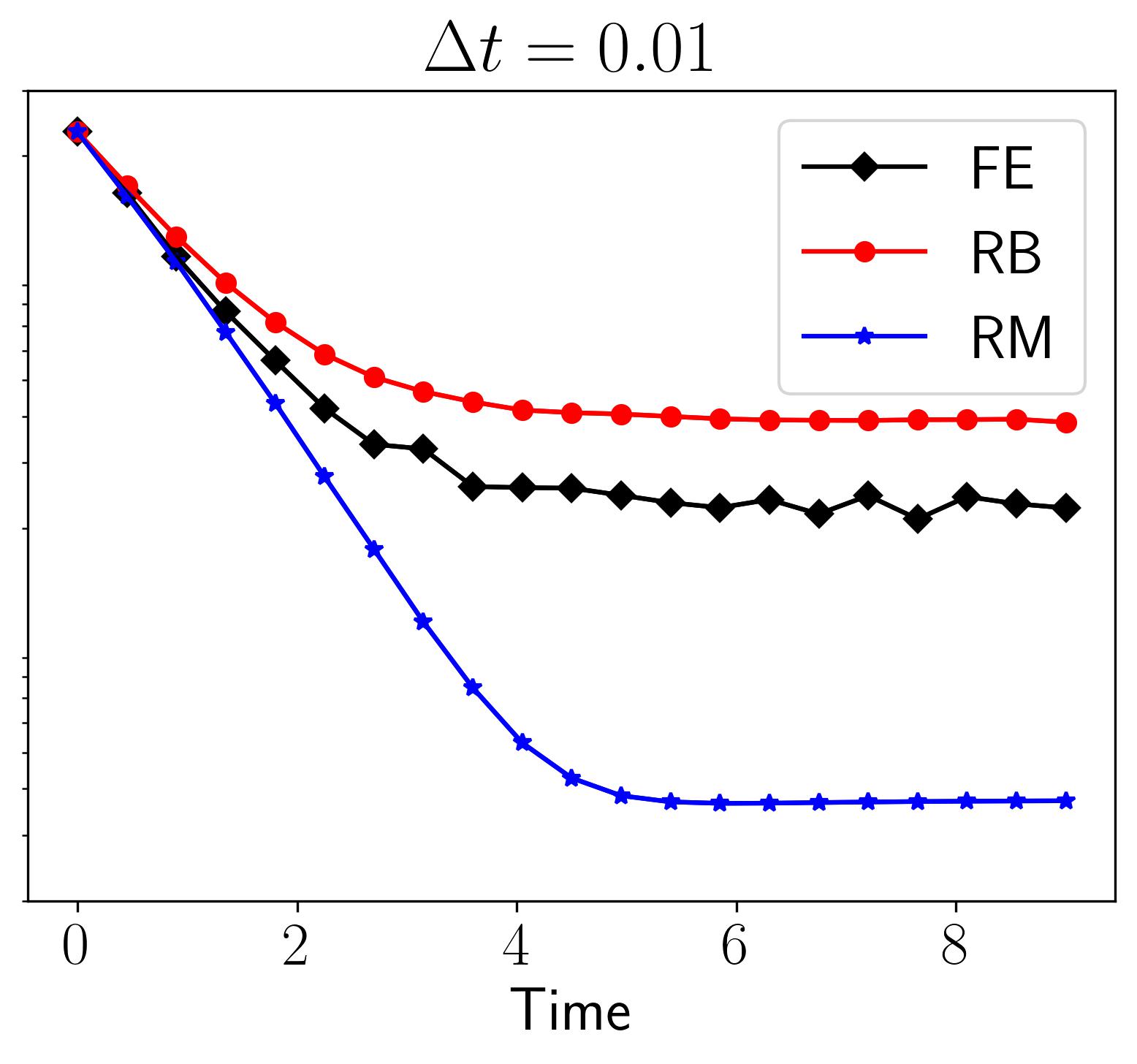}
        \caption{Exponential convergence to the theoretic longtime behavior, in the case of the $d = 1$ porous medium equation with $m = 2$ and external velocity field $v(x) = \frac{x}{3}$. At certain times $t$, we approximate the 2-Wasserstein distance between the particle solution at time $t$ and the theoretic longtime behavior.}
        \label{fig7}
    \end{figure}

 \subsection{The Height-Constraint Case}
    \label{sec:height_constraint}
    We now go beyond the variants of the porous medium / fast diffusion equation considered in the previous sections to demonstrate the benefits of the RM method when the internal energy density $f$ is not smooth.
    We begin by considering the case of height-constrained transport, as in equation $\eqref{eqn12}$.
    Formally, the solution to the PDE moves according to the transport equation, up to the restriction that the density of the particle solution can never exceed height one.
    To the best of the authors' knowledge, the only alternative method proposed for numerically simulating height-constrained dynamics is the back and forth method for Wasserstein gradient flows \cite{jacobs2020backandforthmethodwassersteingradient}.
    Our method provides a meshfree alternative that likewise succeeds in capturing key features of solutions, including the fact that solutions can still move on regions with zero velocity when ``pushed" by other components of the solution.
    
    In order to simulate the height-constraint case, we approximate $f$ by a suitable $f_{\eps}$, as discussed in Section \ref{sec:motivation}.
    The internal energy density $f$ for the height-constraint case is the $m \rightarrow \infty$ limit of the internal energy density given by equation \eqref{eqn1}, as in previous work by Craig and Topaloglu \cite{CRAIG2020239}.
    Thus, in this case, $f_{\eps}$ is given by equation $\eqref{eqn1}$ with $m = \frac{1}{\eps}$; in Figure \ref{fig19}, $\eps = \frac{1}{400}$ and in Figure \ref{fig4}, $\eps = \frac{1}{100}$.
    
    In Figure \ref{fig19}, the external velocity field is 
    \begin{equation}
        v(x) = \begin{cases}
            -1 & x \leq 0 \\
            0 & \text{otherwise.}
        \end{cases}
        \label{eqn17}
    \end{equation}
    We set $h = 0.025$ and solve the system using the RM method with $\Delta t = 10^{-6}$ up to final time $T = 1.5$.
    Our initial condition is two connected components.
    Up to time $t_{\text{crit}} := 0.5$, the left component moves to the right with speed one.
    After time $t_{\text{crit}}$, we can think of this left component ``pushing" the right component farther to the right. 
    The fact that the density can continue to evolve on regions where the velocity is zero is a key feature of the height-constraint equation \eqref{eqn12}.
    This was a motivation of recent work by Masson et al. \cite{masson2026stifflimitnonhomogeneousconservation}, which proposes an alternative model of height-constrained transport where the density does not move on regions where the velocity is zero and can instead block the motion of density on regions with nonzero velocity.

    \begin{figure}
        \centering
        \includegraphics[width = .25 \linewidth]{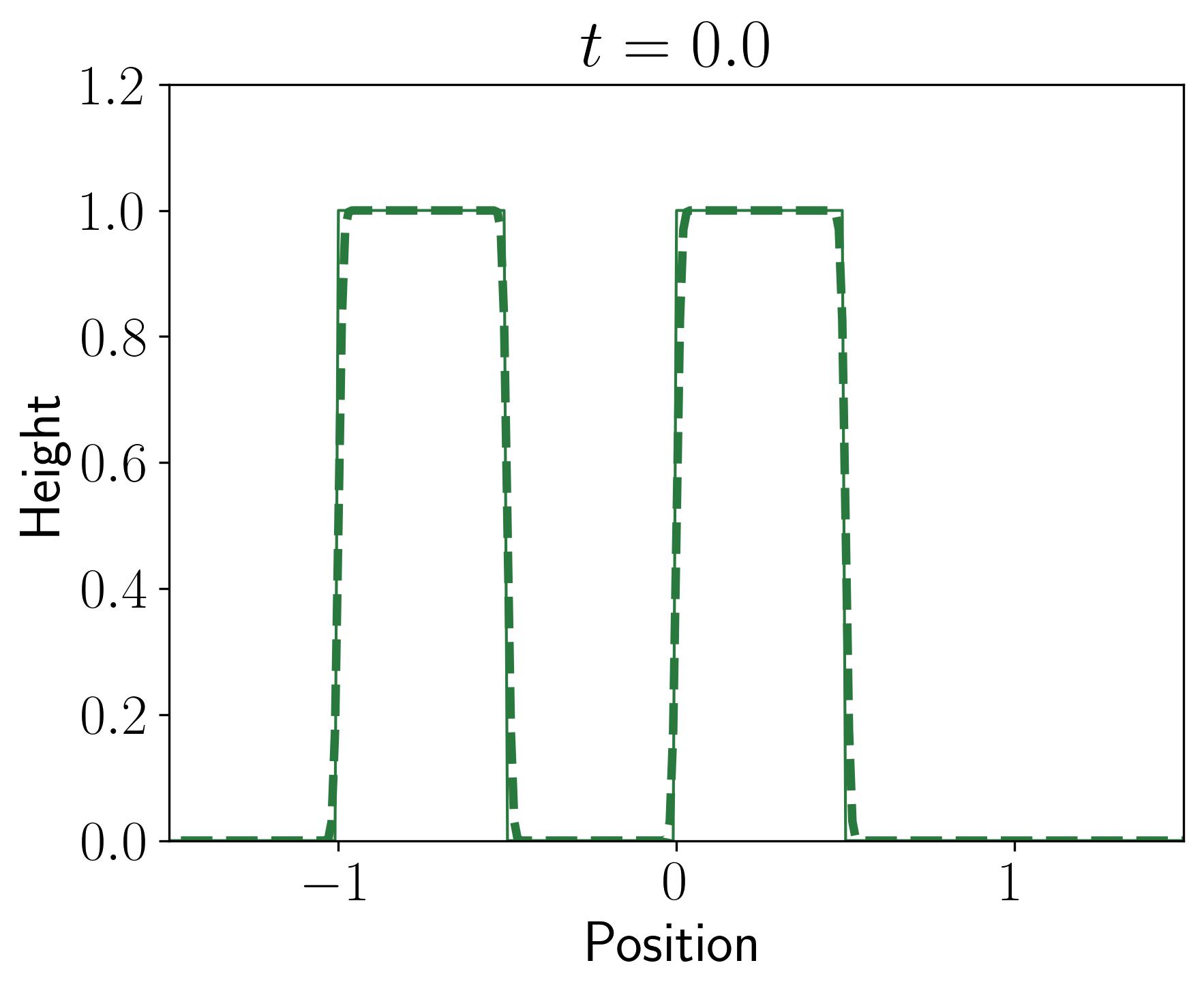}
        \includegraphics[width = .22 \linewidth]{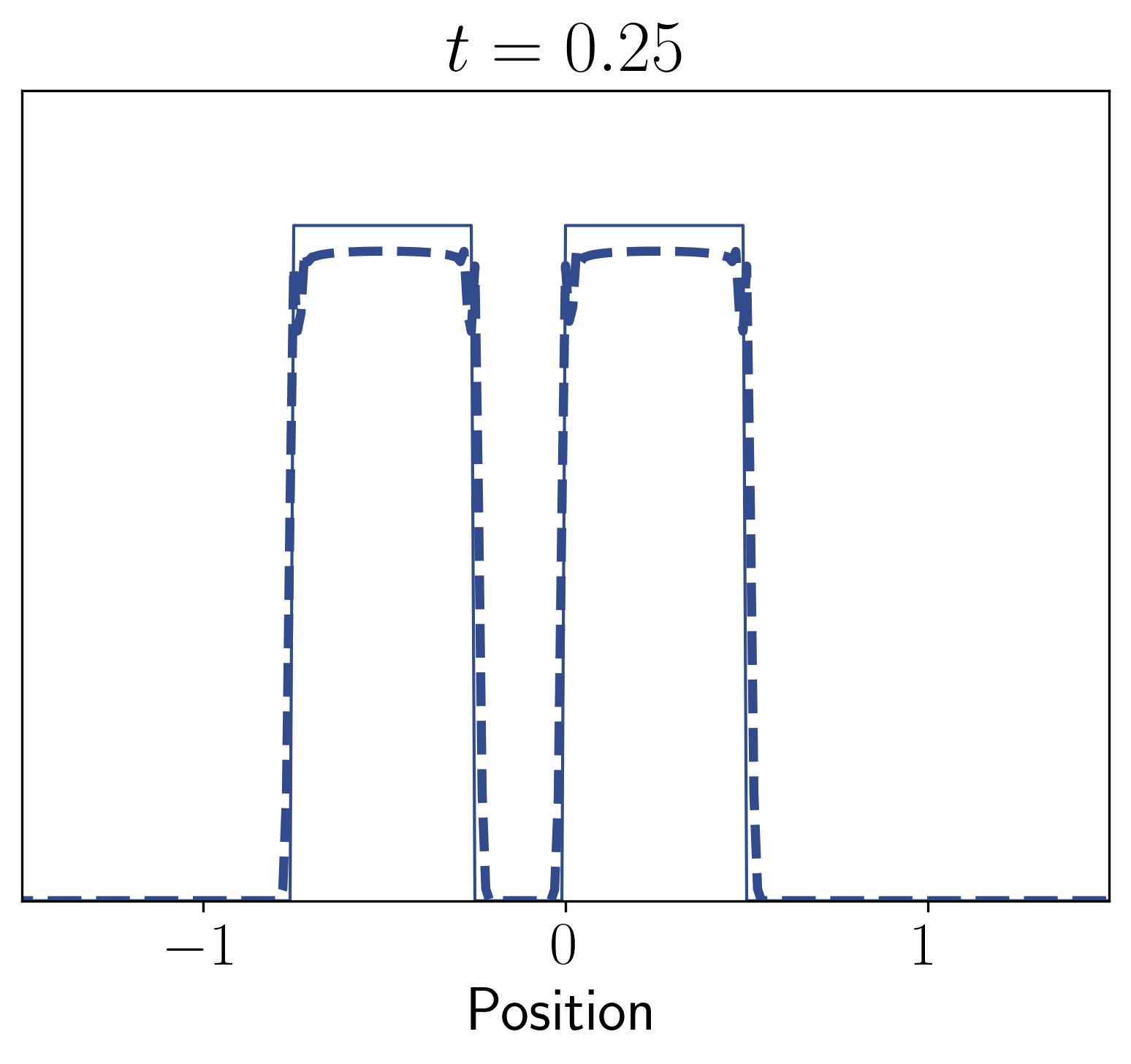}
        \includegraphics[width = .22 \linewidth]{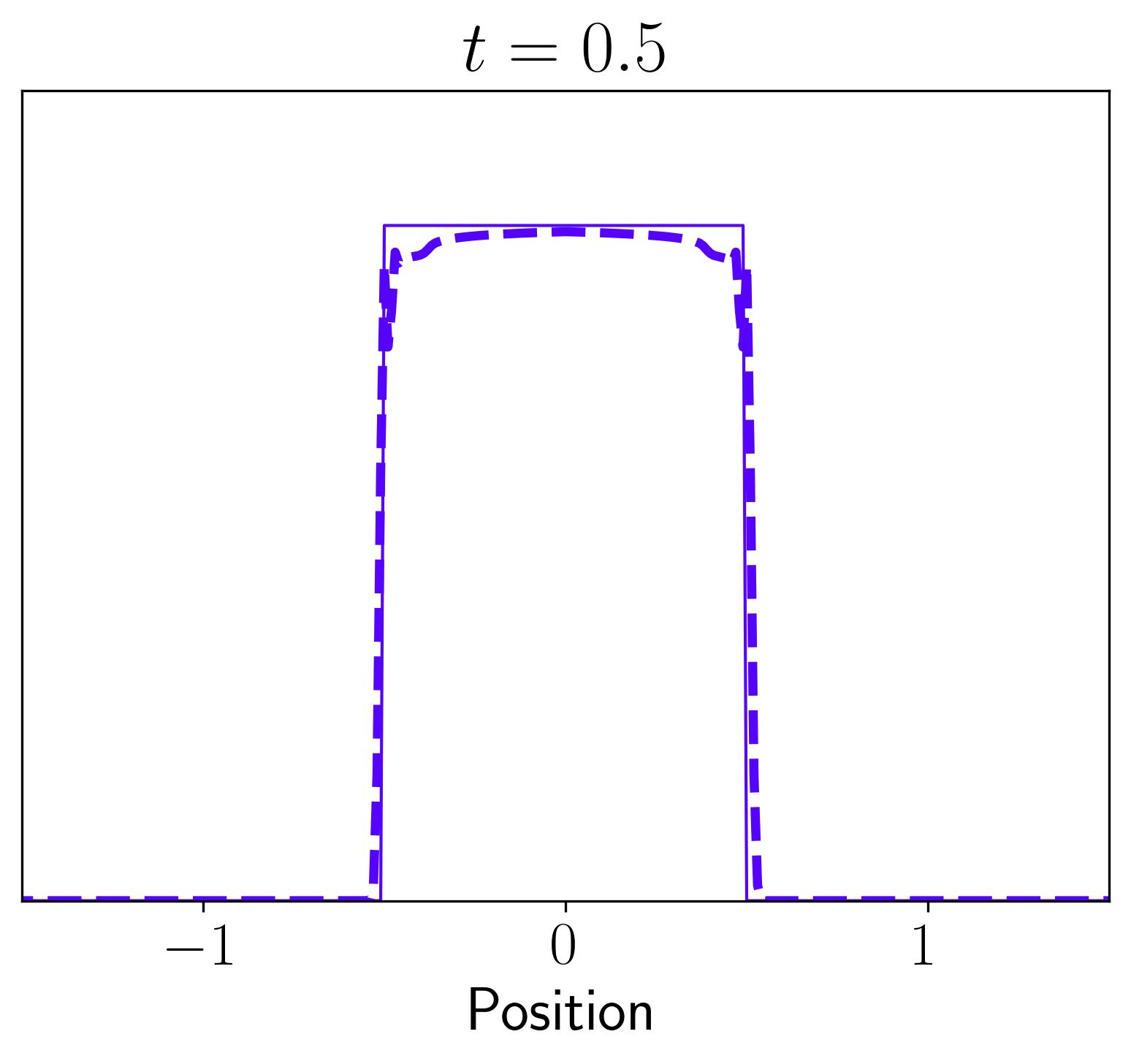}
        \includegraphics[width = .22 \linewidth]{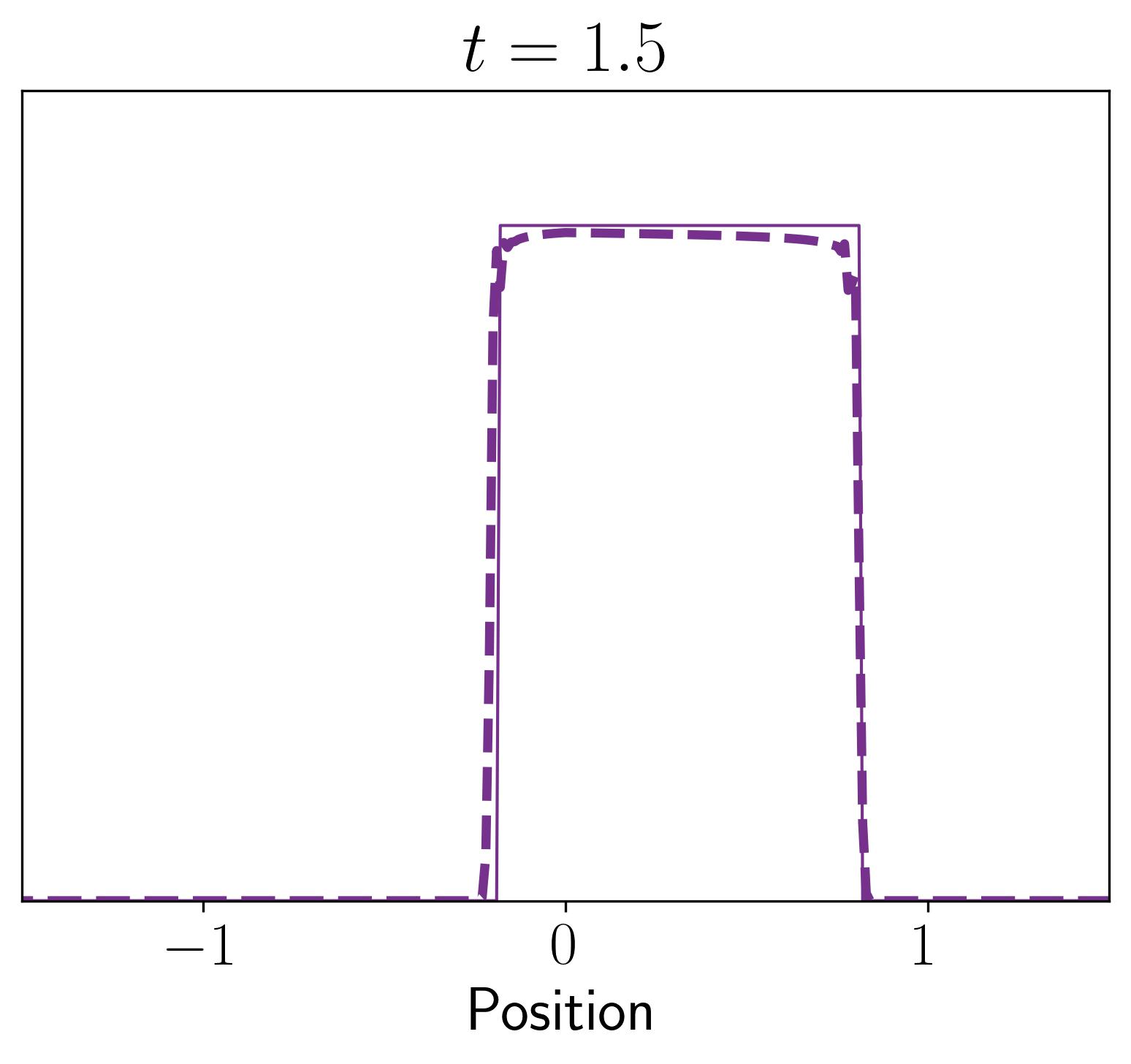}
        \caption{Particle solution to the $d = 1$ height-constraint case with external velocity field given by equation \eqref{eqn17}, solved using the RM method.
        At various times $t$, we display the particle solution (dashed line) and exact continuum solution (solid line).}
        \label{fig19}
    \end{figure}

    In Figure $\ref{fig4}$, the external velocity field is $v(x) = x$.
    We set $h = 0.005$. 
    Using the FE, RB, and RM methods, we solve the ODE system up to final time $T = 1.5$. 
    Notice that the RB method is not a viable numerical method for this type of regime; at a certain point, particles ``shoot out" toward infinity, leading to an extremely inaccurate solution.
    In fact, these accuracies persist even though we project particles that leave the interval $[-3, 3]$ back into the domain.
    On the other hand, while the RM method requires greater computation time than the RB method, it preserves the accuracy of the system.
    The particle solution found using the RM method is virtually identical to the particle solution found using the FE method.
    
    \begin{figure}
        \centering

        \includegraphics[width = .3 \linewidth]{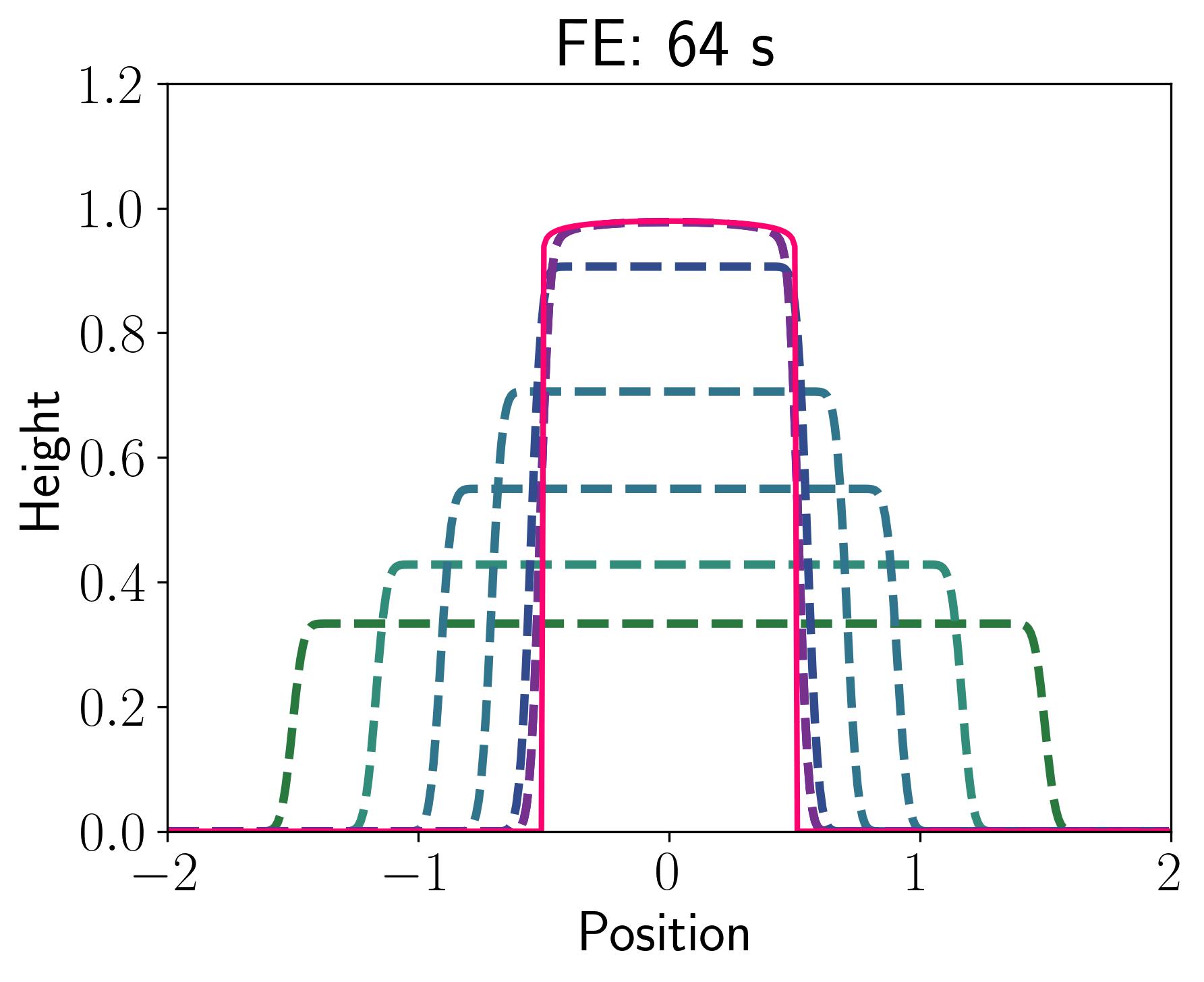}
        \includegraphics[width = .272 \linewidth]{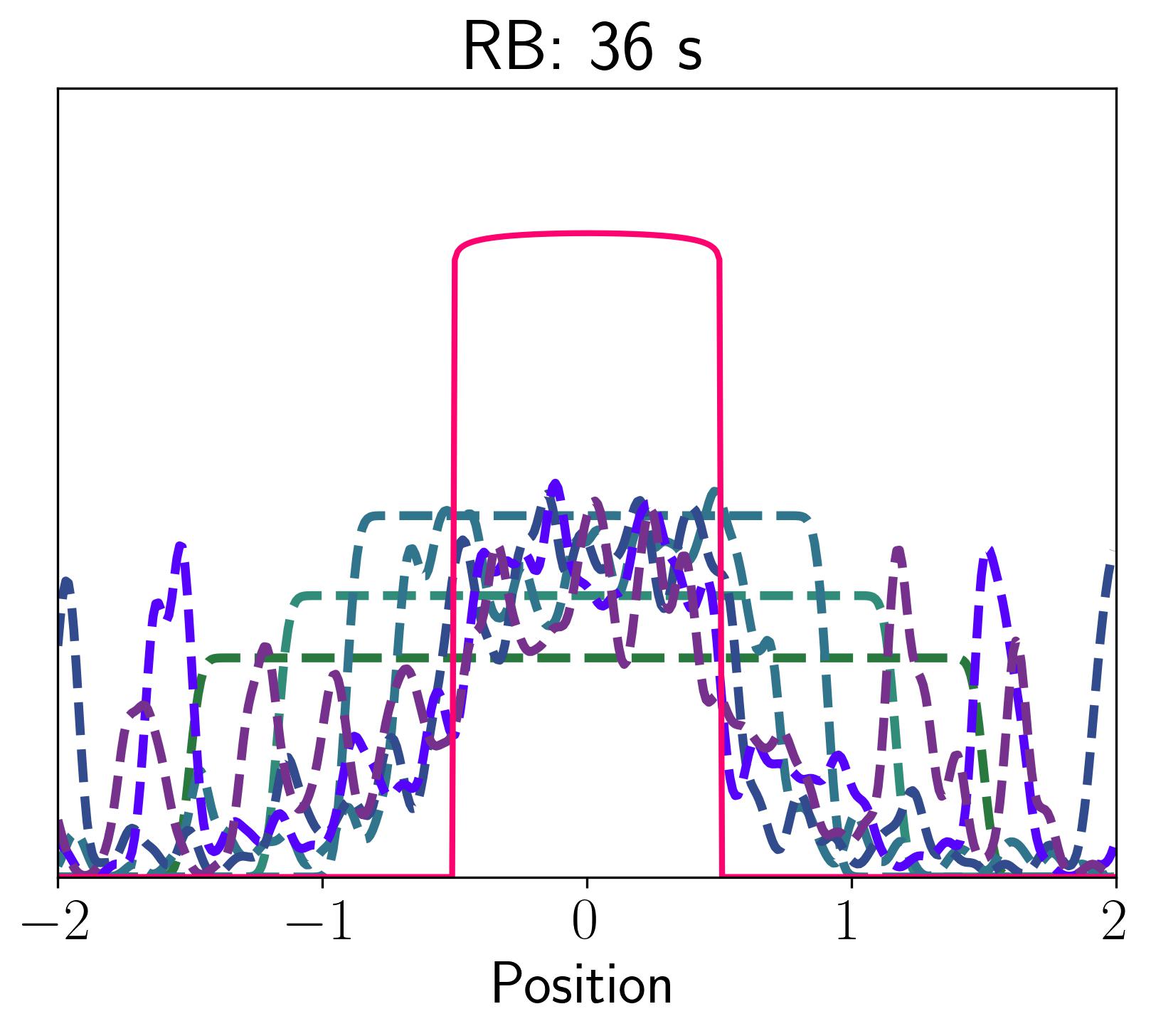}
        \includegraphics[width = .3725 \linewidth]{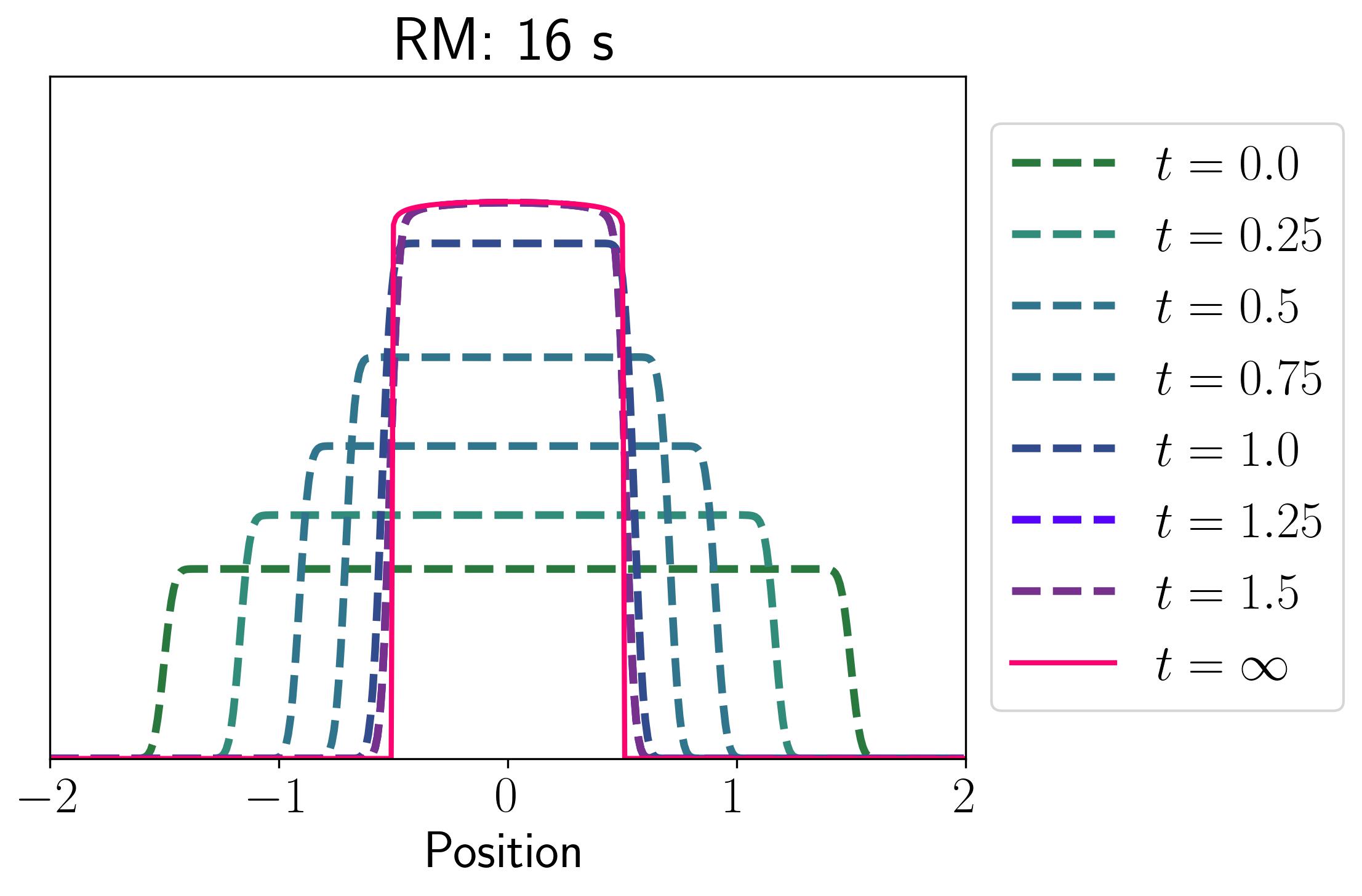}

        \caption{Particle solutions to the $d = 1$ height-constraint case with external velocity field $v(x) = x$, approximated via slow diffusion with $m = 100$.
        We solve the system via the FE, RB, and RM methods up to final time $T = 1.5$. 
        We display the runtime of each method above each plot.
        The dashed lines correspond to the particle solution at different times $t$, while the solid pink line corresponds to the theoretic longtime behavior.}
        \label{fig4}
    \end{figure}

    In Figure \ref{fig5}, we repeat the numerical setup as in Figure \ref{fig4}, varying the time step sizes:
    \[ \Delta t \in \left \{ 10^{-4} \left ( \frac{n}{2} \right ): 2 \leq n \leq 11 ,  n \in \Z \right \} \] 
    We take the average error and runtime over ten different random seeds.
    We see that the FE method displays a significant drop in error once the time step becomes suitably fine.
    The RM method also demonstrates this drop, but at a coarser time step.
    This phenomenon is consistent with the results of Sections \ref{sec:diffusion_exact} and \ref{sec:diffusion_longtime}, where we observed better and better behavior of the RM method in the slow diffusion limit.
    The height-constraint case ($m \rightarrow \infty$) is the extreme case of this regime.
    For all time steps simulated, the RB method is relatively inaccurate.
 
    \begin{figure}
        \centering
        \includegraphics[width=0.5\linewidth]{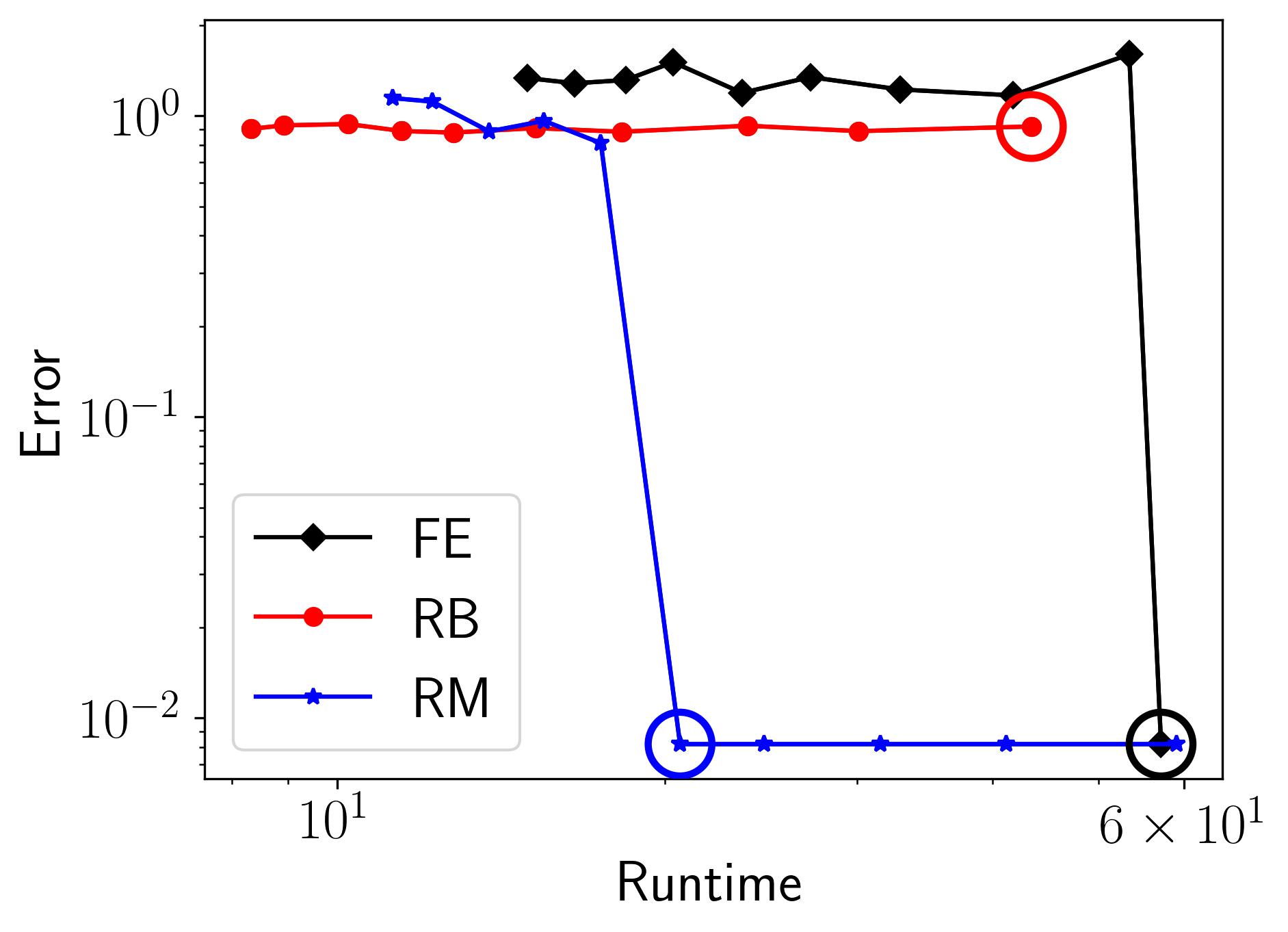}
        \caption{The $d = 1$ height-constraint case with external velocity field $v(x) = x$. For the FE, RB, and RM methods, and for various time steps, we approximate the 2-Wasserstein distance between the particle solution at $T = 1.5$ and the theoretic longtime behavior.
        We circle the simulations corresponding to the particle solutions in Figure \ref{fig4}.}
        \label{fig5}
    \end{figure}
  
 \subsection{The Sandpile Model}
    \label{sec:sandpile}
    As in the section above, we simulate another variant of equation \eqref{eqn:PDE} with nonsmooth internal energy density $f$ to demonstrate the wide array of diffusion equations that the RM method can effectively handle.
    We now consider the sandpile dynamics PDE; see equation \eqref{eqn18}.
    Heuristically, solutions of this PDE behave as follows: if the density exceeds the critical height threshold $r_c > 0$ on a given region, it diffuses on that region, but there is no diffusion when the density is beneath the critical threshold.
    To the best of the authors' knowledge, the only existing method proposed for the sandpile dynamics is a finite difference method, which uses a smoothed step function to capture the transition between the diffusive and non-diffusive regimes \cite{Bantay1992}. 
    Our method provides a meshfree alternative that likewise succeeds in capturing key features of solutions, including the growth of the critical region.
    We regularize $f$ via $f_{\eps}$ with first derivative given by
    \begin{equation*}
        f'_{\eps}(s)
        = \begin{cases}
            0 & s \leq r_c - \eps \\
            S \left ( \frac{s - (r_c - \eps)}{2 \eps} \right ) \left [ 1 + \ln \left ( \frac{s}{r_c}\right ) \right ] & r_c - \eps < s < r_c + \eps \\
            1 + \ln \left ( \frac{s}{r_c} \right ) & s \geq r_c + \eps , 
        \end{cases}
    \end{equation*}
    where $S(t) = 6t^5 - 15t^4 + 10t^3$.
        
    In the top row of Figure \ref{fig18}, we plot the exact continuum solution and the particle solution solved using the FE method up to final time $T = 0.05$. 
    On the left, we set $r_c = 0.25$; in order to obtain an accurate solution, we set $h = 0.001$ and use the FE method with time step $\Delta t = 10^{-6}$.
    On the right, we consider the less computationally intensive case that $r_c = 0.1$; we set $h = 0.005$ and use the FE method with time step $\Delta t = 10^{-4}$.

    In the bottom row of Figure \ref{fig18}, we compute the particle solution, using the FE, RB, and RM methods.
    In the top row, we see that the blob method discretization successfully captures the two regimes of behavior in the sandpile dynamics: the region on which diffusion is occurring and the region on which no diffusion is occurring. 
    Particularly when $h$ is small, this discretization is able to capture the growth of the critical region at the boundary where diffusion occurs.
    In the bottom row, we observe that the RM method achieves the best balance between accuracy and efficiency, though the improvement over the FE and RB methods is not as large as we saw in the slow diffusion setting.

    \begin{figure}
        \centering
        \includegraphics[width=0.4\linewidth]{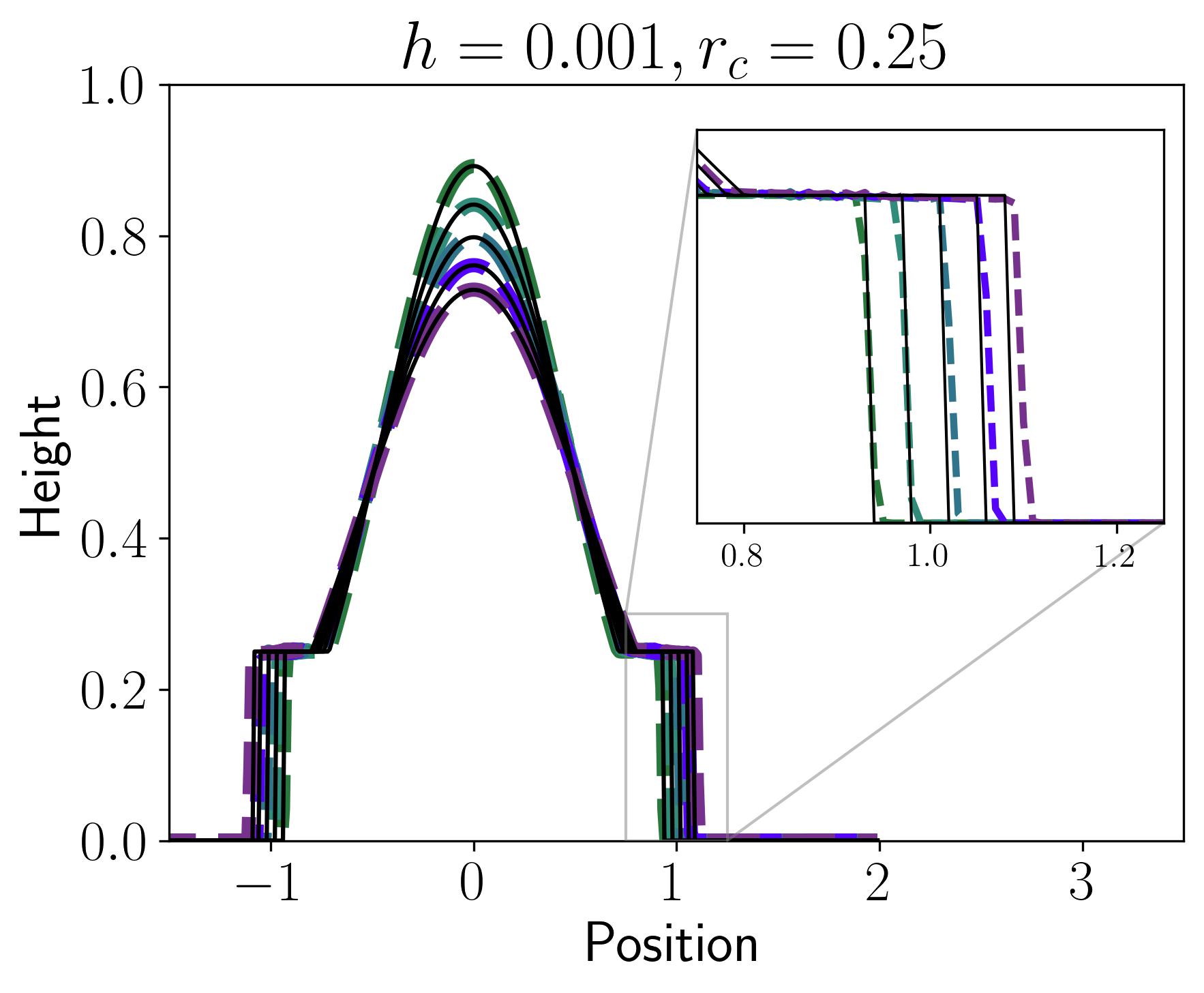}
        \includegraphics[width=0.507\linewidth]{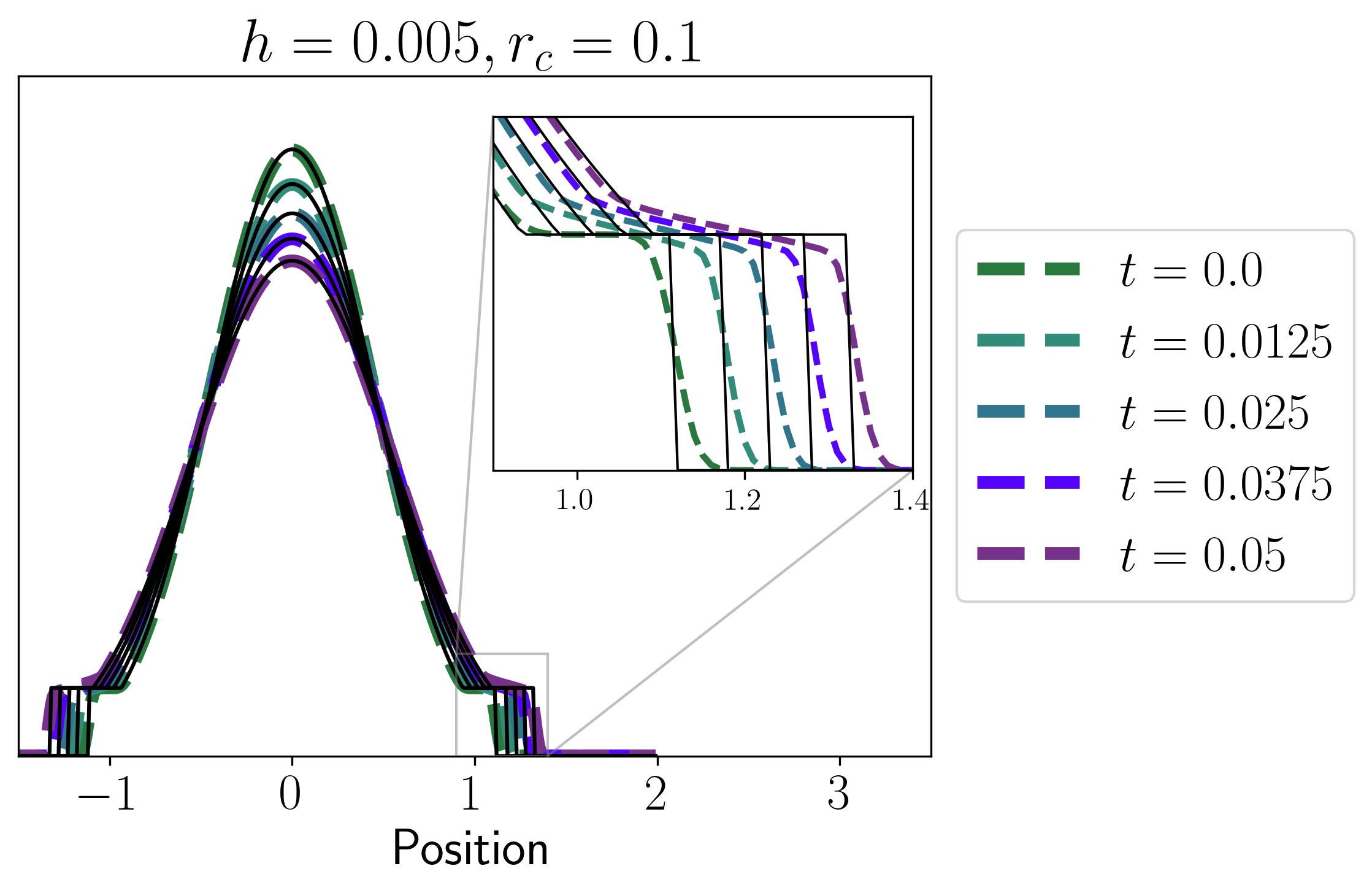} \\
        
        \includegraphics[width=0.5\linewidth]{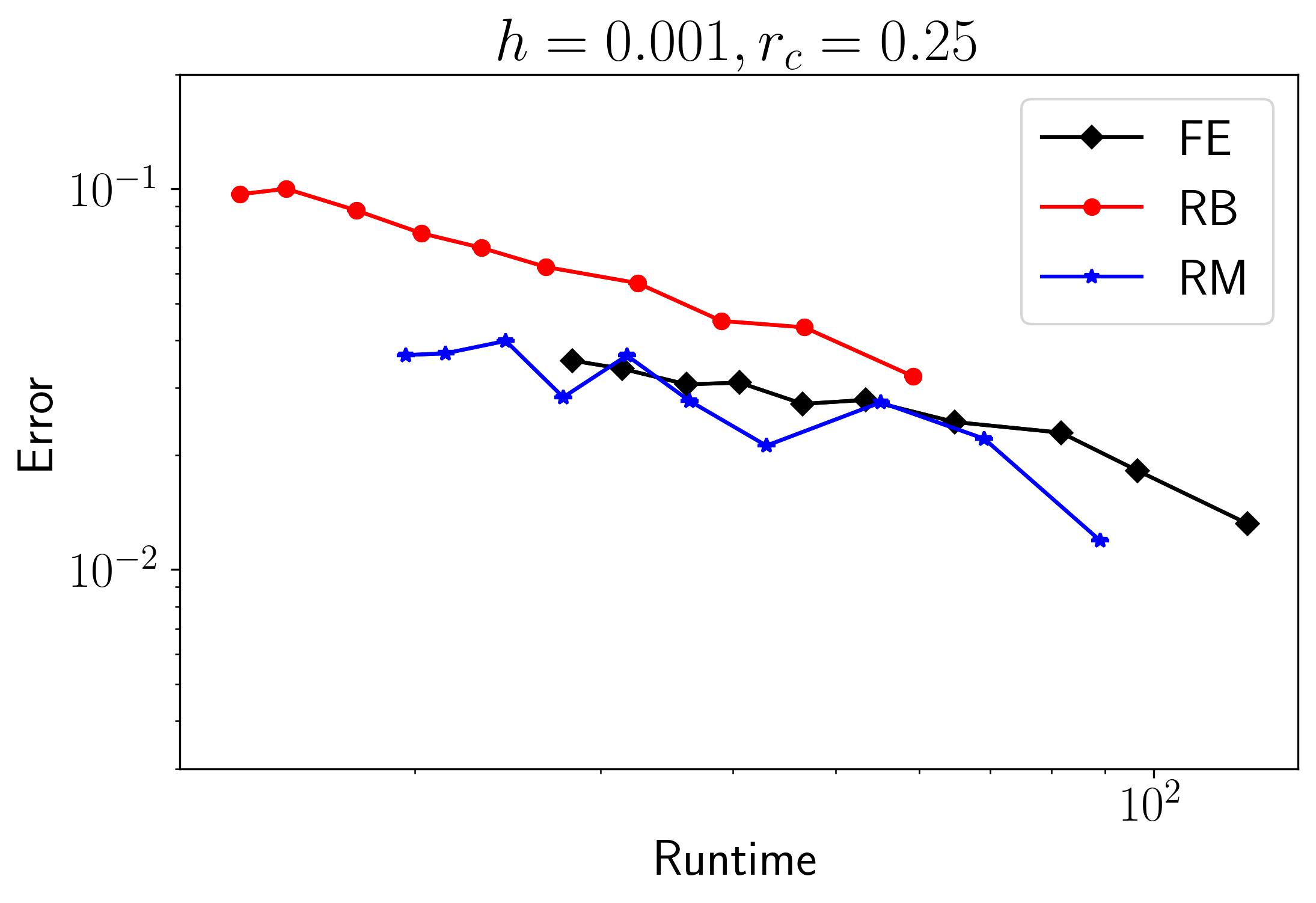}
        \includegraphics[width=0.438\linewidth]{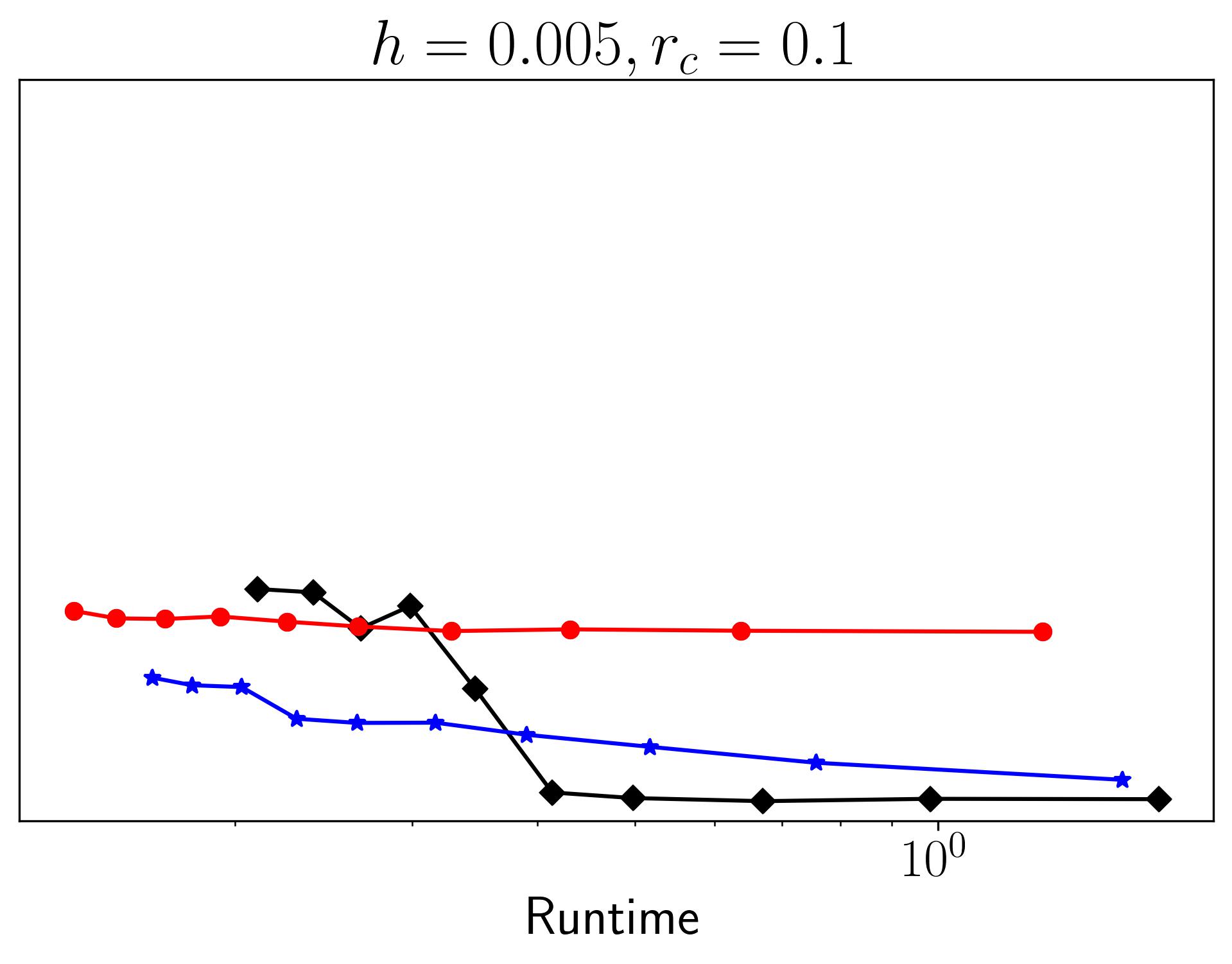}
        \caption{In the top row, we calculate the particle solution to the sandpile regime for different choices of critical threshold $r_c$.
        In the bottom row, for the FE, RB, and RM methods, and for various time steps, we approximate the 2-Wasserstein distance between the particle solution at time $T = 0.05$ and the exact continuum solution. }
        \label{fig18}
    \end{figure}

\section{Two-Dimensional Simulations}

\label{sec:2d}

\subsection{2D Diffusion}
    \label{sec:2d_diffusion}
    The trends we observed in one dimension largely persist in two dimensions: the RM method exhibits an excellent tradeoff between and accuracy and efficiency, particularly in the slow diffusion regime.
    To begin our two-dimensional simulations, we consider the slow diffusion regime with $m = 5$ and no external velocity.
    On the left-hand side of Figure \ref{fig10}, we repeat the experimental setup in Section \ref{sec:diffusion_exact}, adapted to the $d = 2$ setting.
    We set $T = 0.1$, $h = 0.01$.
    We include vertical bars showing the standard deviation in error over five random seeds.
    We do not include variation in runtime, as the runtime of each simulation is relatively large, and variations in runtime due to background processes (e.g. mouse movement, scheduled tasks, etc.) are negligible compared to the total runtime.

    We see that the RM method outperforms both the FE and RB methods; for any choice of time step size, the RM method achieves greater accuracy than either of these other methods.
    While in the $d = 1$ version of this plot (Figure \ref{fig12}), the RB method is outperformed by both the FE and RM methods in one dimension, in the $d = 2$ case, the RB method outperforms the FE method.

    \begin{figure}
        \centering
        \includegraphics[width=.45\linewidth]{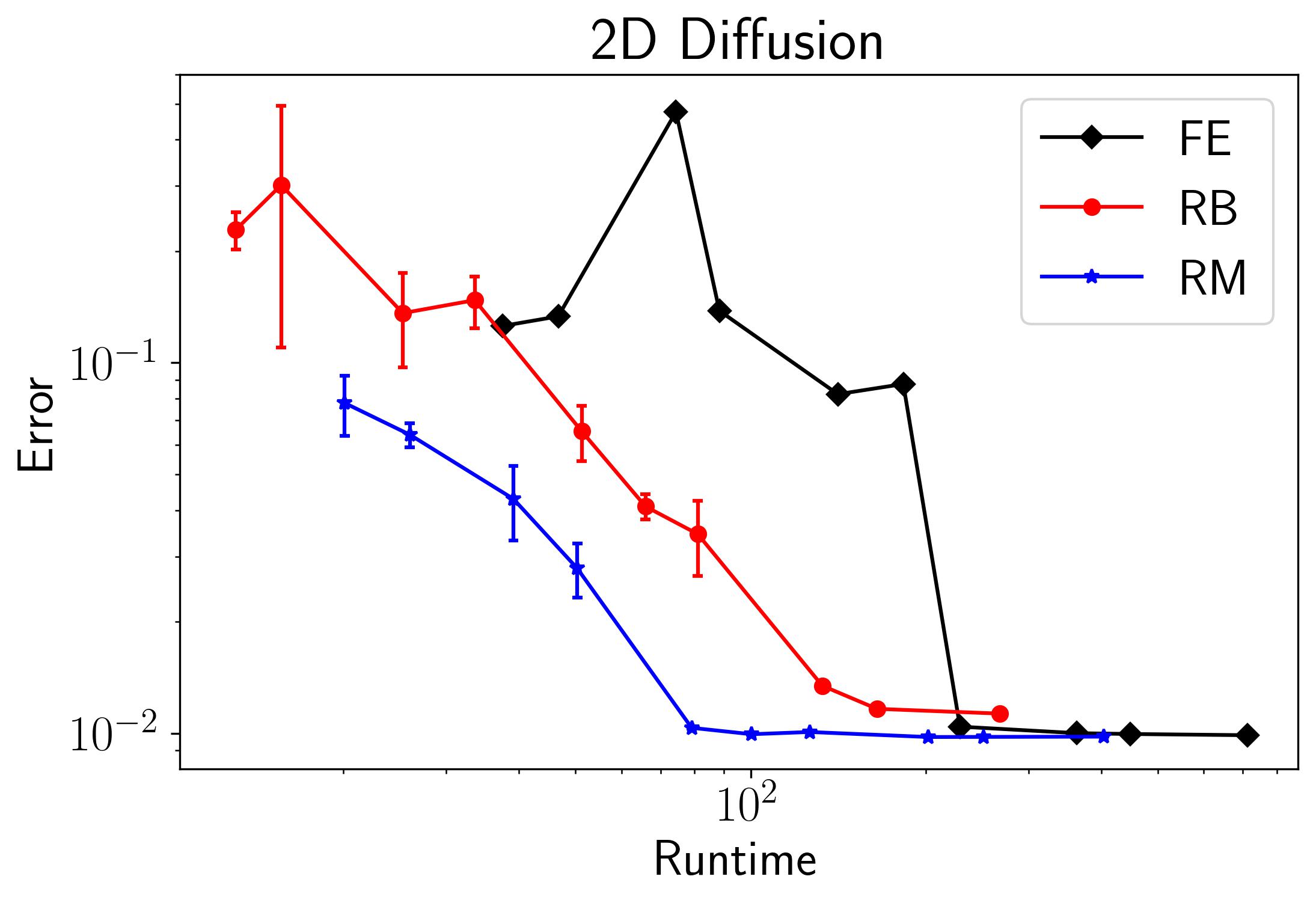}
        \includegraphics[width = .45 \linewidth]{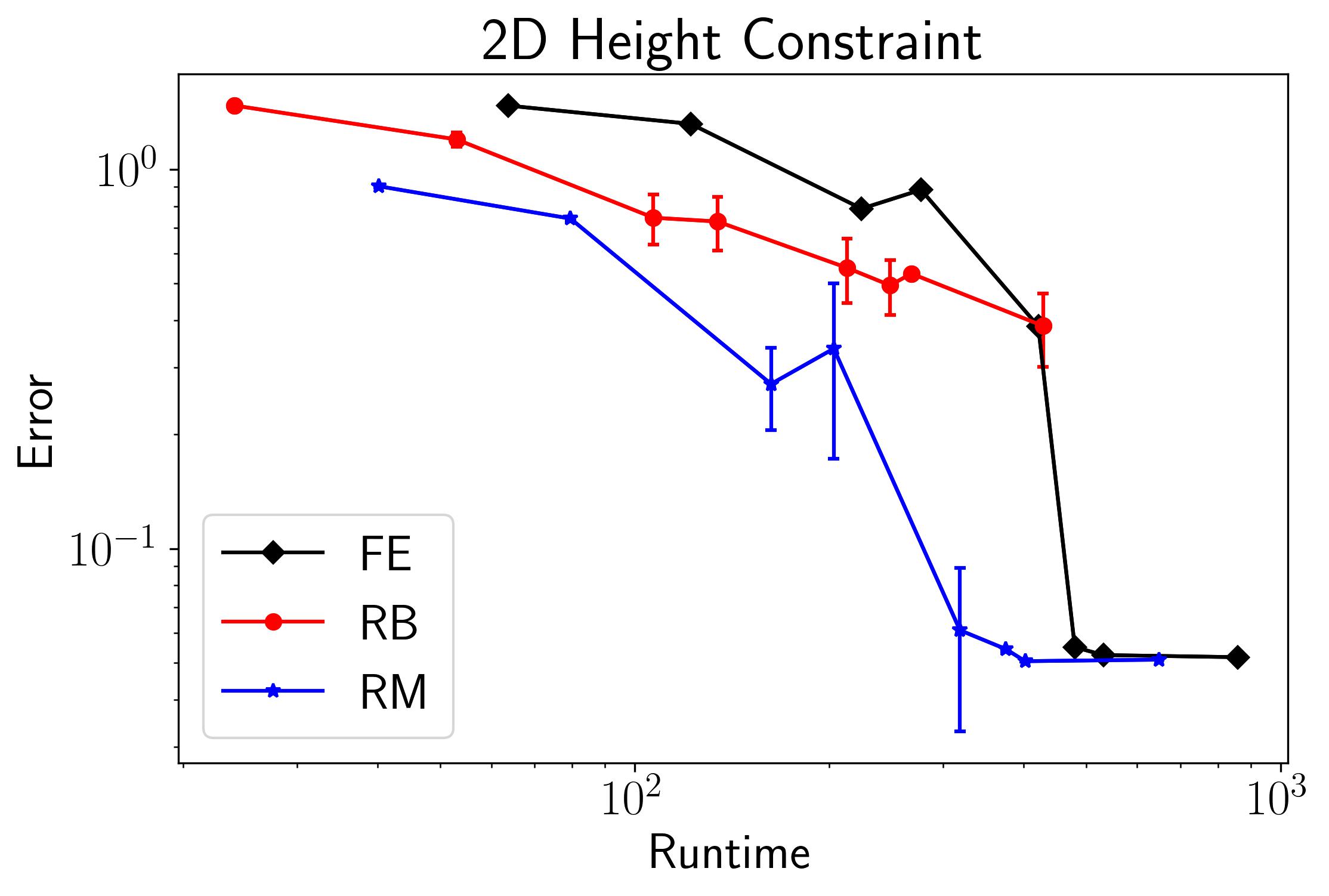}
        
        \caption{Two examples of 2D nonlinear diffusion equations. 
        On the left is the 2D nonlinear diffusion equation with $m = 5$ and external velocity field $v(x) = 0$, simulated up to final time $T = 0.1$. 
        On the right is the 2D height-constraint case with external velocity field $v(x, y) = (x, y)$, simulated up to final time $T = 1.1$.}
        \label{fig10}
    \end{figure}

\subsection{2D Height-Constraint}
    \label{sec:2d_height_constraint}

    We now consider the $d = 2$ analog of the height-constraint dynamics first explored in Section \ref{sec:height_constraint}.
    As in Section \ref{sec:height_constraint}, we approximate the internal energy density $f$ given by equation \eqref{eqn12} via $f_{\eps}$ as defined by equation \eqref{eqn1} with $m = \frac{1}{\eps} = 100$.
    The external velocity field is $v(x, y) = (x, y)$.
    We choose initial distribution
    \[ \rho_0(x, y) = \begin{cases}
        \frac{1}{e^2} & |x| < \frac{e}{2} \text{ and } |y| < \frac{e}{2} \\
        0 & \text{otherwise.}
    \end{cases}\]
    With this initial distribution, we hit the ``ceiling" one at time $T_{ceil} = 1$.

    On the right-hand side of Figure \ref{fig10}, we use the FE, RB, and RM methods for various time step sizes to calculate the particle solution up to final time $T = 1.1$.
    We set $h = 0.025$.
    As in the $d = 1$ case, particles are constrained to remain in the square $[-3, 3] \times [-3, 3]$.
    As in Section \ref{sec:2d_diffusion}, we include bars showing the standard deviation in error over five random seeds.

    Again, we compare the performance of our methods in one dimension (Figure \ref{fig5}) and two dimensions (right-hand side of Figure \ref{fig10}).
    In one dimension, the RM method preserves accuracy for coarser time steps than the FE method.
    In two dimensions, the RM method effectively ``shifts" the runtime-error line of the FE method leftward.
    In other words, for almost every time step, the FE and RM methods are equally accurate, although the RM method is faster.
    In one dimension, the RB method is inaccurate for all random seeds chosen, while in two dimensions, the RB method performs better in terms of the accuracy / efficiency tradeoff, with large variation in error its main drawback.

\subsection{The 2D Navier-Stokes Equation}

    We conclude by considering the behavior of the RB and RM methods when used to simulate the viscosity formulation of the 2D Navier-Stokes equation, as in equation \eqref{eqn16}.
    Note that the ODE system in system \eqref{eqn:xdot} has become slightly more complex, as the final term $v(x_j(t))$ implicitly depends on the locations of all particles.
    In fact, this is a key reason blob methods for diffusion are an interesting choice for this equation: since computing the evolution of the particles according to the velocity $v$ already entails solving an interacting particle system, choosing a numerical approximation of the diffusion in terms of an interacting particle system does not increase the computational complexity in the way an approximation by, e.g., Brownian motion would.
    
    In Figure \ref{fig14}, we compute the particle solutions to this equation with viscosity coefficient $\nu = 0.1$, using the FE, RB, and RM methods, for ten evenly spaced time steps, and consider the average runtime and error over five random seeds.
    In all cases, we set $h = 0.025$ and $T = 0.5$.
    We use Nordmark's eighth-order cutoff function with $\delta = 45h^{0.99}$ to regularize the Biot-Savart kernel \cite{Nordmark1995}. 
    Given that the initial solution is a Gaussian, the exact continuum solution is a Gaussian evolving according to the heat equation with diffusion coefficient $\nu$ \cite{Bertozzi2002}.
    While the RM method continues to achieve a good balance between accuracy and efficiency, its behavior is very similar to that of the FE method in this case.

    \begin{figure}
        \centering
        \includegraphics[width=0.6\linewidth]{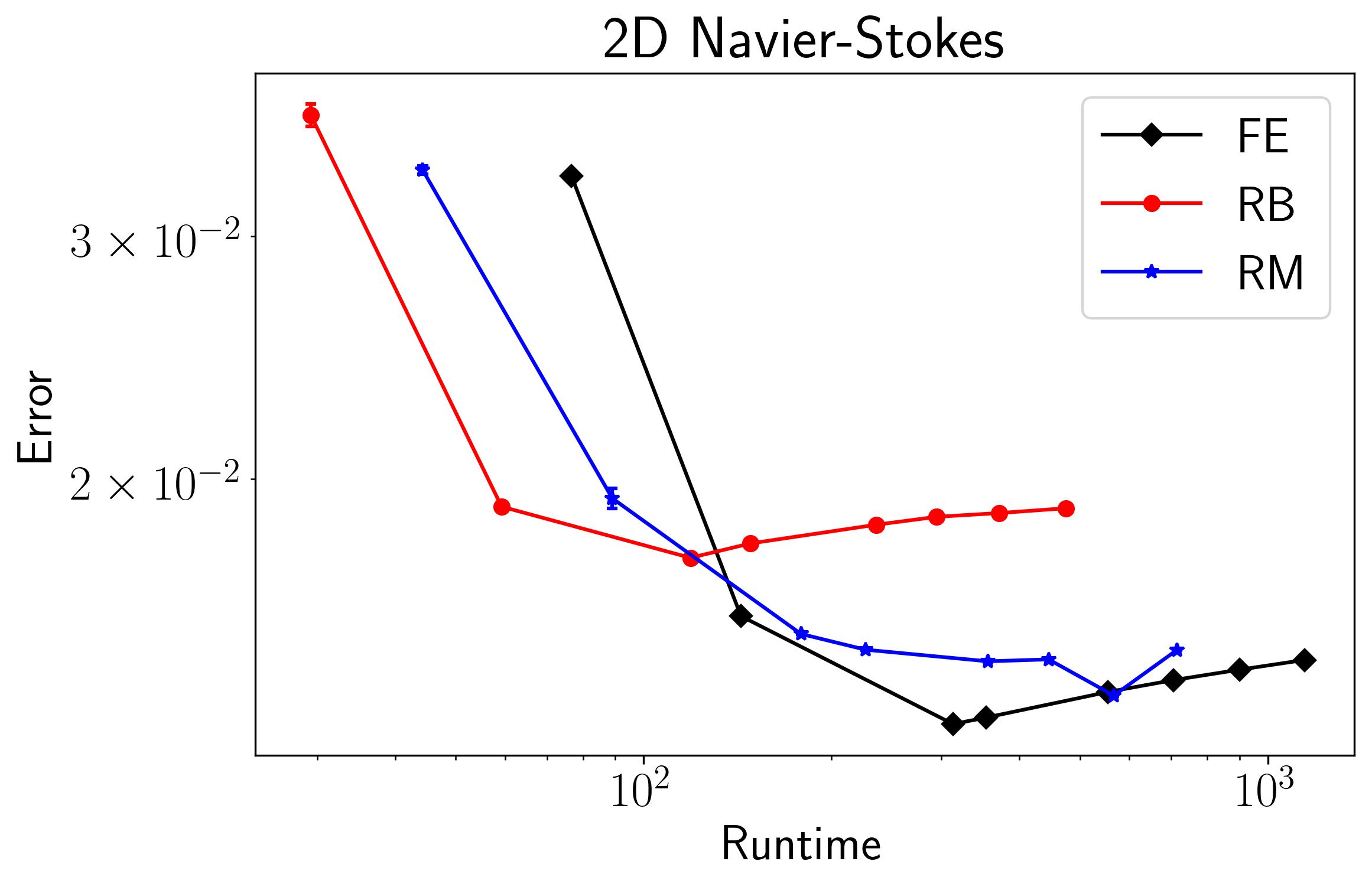}
        \caption{The 2D Navier-Stokes equation.
        For the FE, RB, and RM methods, and for various time steps, we approximate the 2-Wasserstein distance between the particle solution at time $T = 0.5$ and the exact continuum solution.}
        \label{fig14}
    \end{figure}

\subsection{The Double Bump}
\label{doublebump}

    To illustrate the strength of the RM method, we conclude this work with an application, demonstrating that the RM approach can successfully capture key qualitative features of solutions to the Navier-Stokes equation. 
    We compare the results of our simulations to the classical work by Nordmark \cite{Nordmark1995}, which utilizes regridding to simulate the Navier-Stokes equation in the case $\nu = 0.0005$ with a ``double bump" initial condition given by 
    \[ \rho_0(x, y) = C(\max(0, 1 - x^2) \max(0, 1 - y^2))^7 . \]
    Here, $C$ is a constant chosen so that the function integrates to 1.
    
    In Figure \ref{fig13}, we reproduce this simulation (without regridding) using the RM method with time step $0.05$ up to $T = 11.3$. 
    We consider the case of $4688$ particles, which are initially distributed in concentric circles.
    Instead of using Hald's infinite-order cutoff function, as in \cite{Nordmark1995}, we use Nordmark's eighth order cutoff function with $\delta = 45h^{0.99}$.
    This choice decreased the runtime of simulations by around 75\%, while maintaining a high amount of accuracy.

    \begin{figure}
        \centering

         \includegraphics[width=0.225\linewidth]{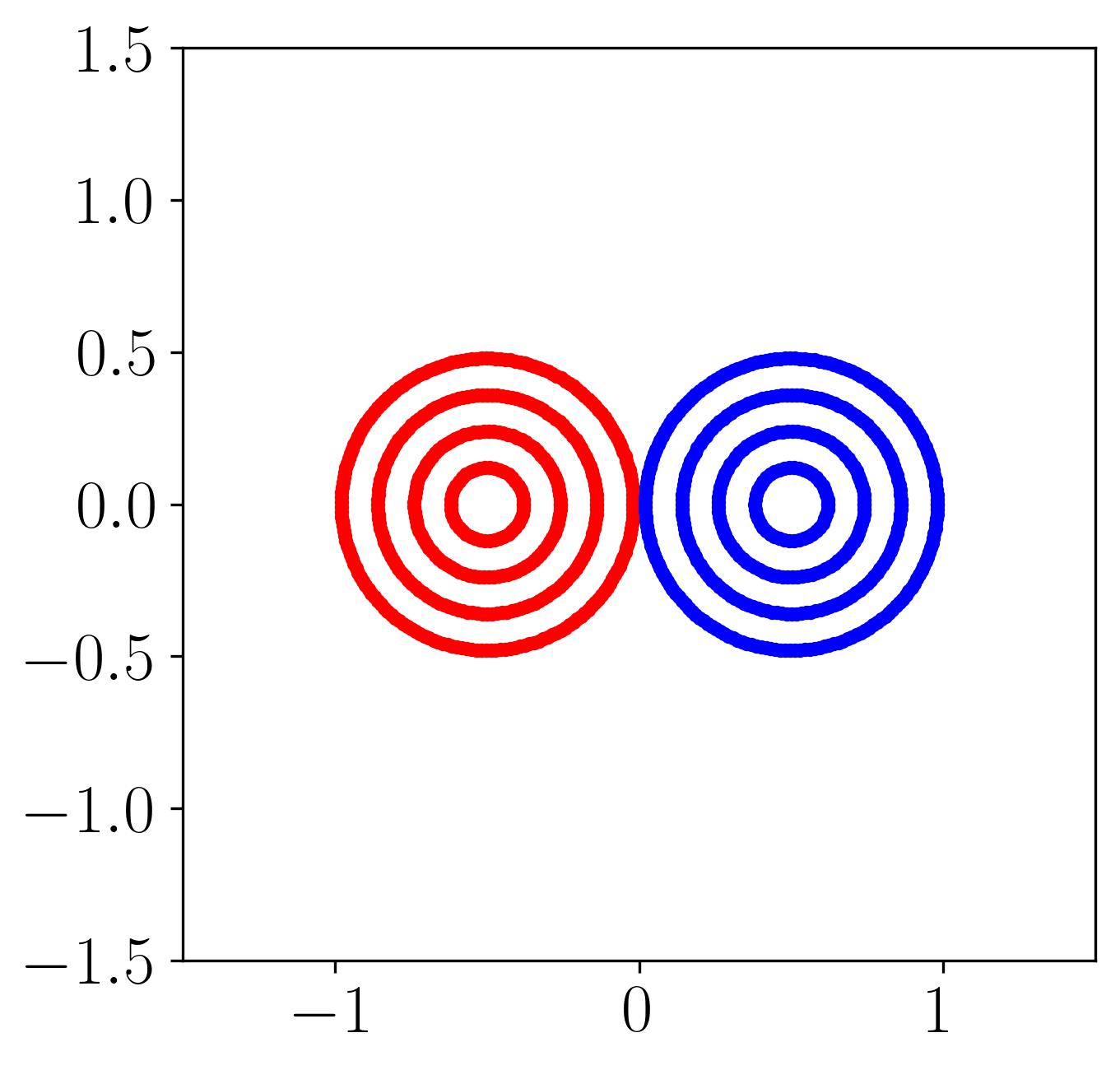}   
        \includegraphics[width=0.2225\linewidth]{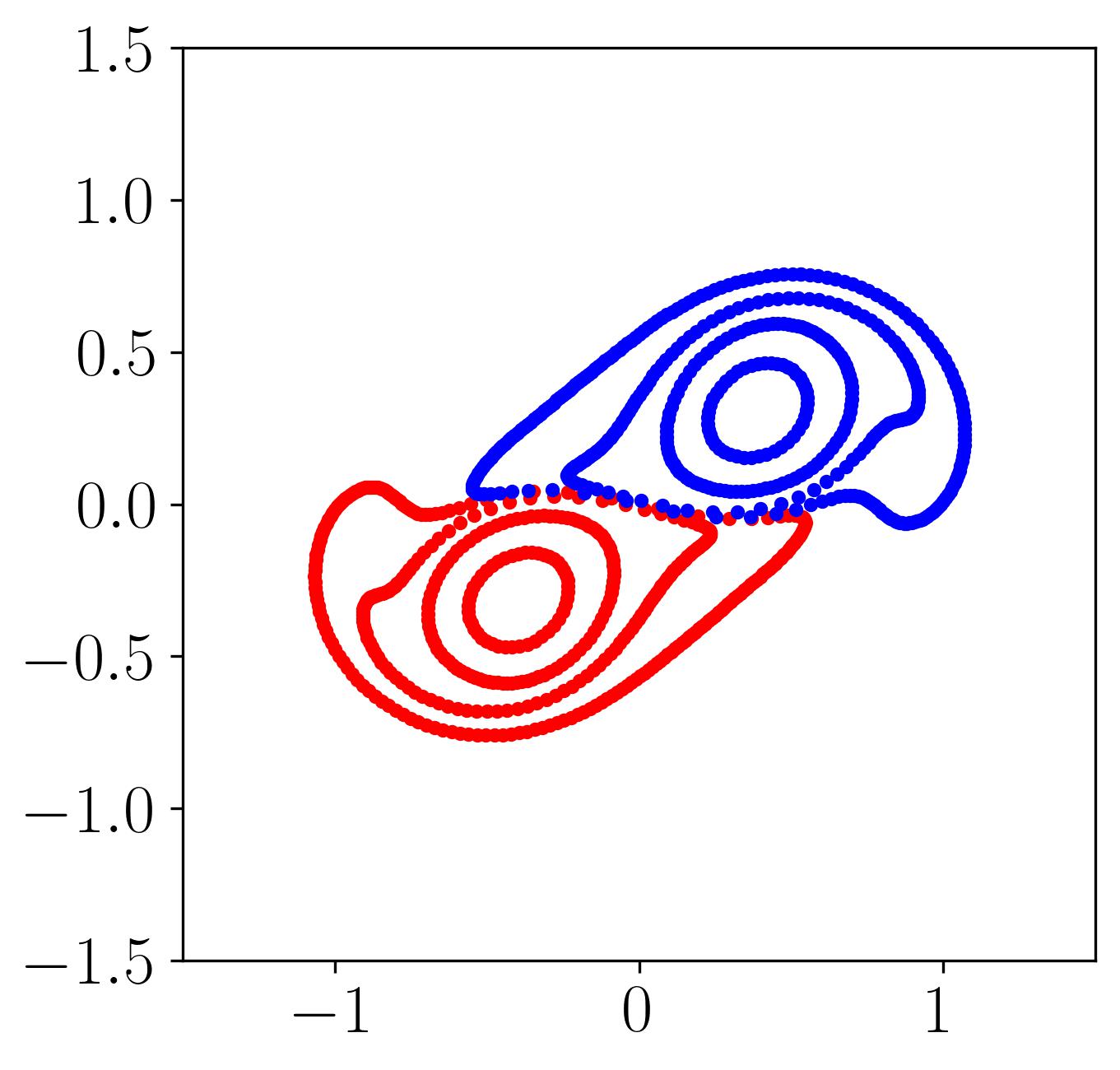} 
        \includegraphics[width=0.225\linewidth]{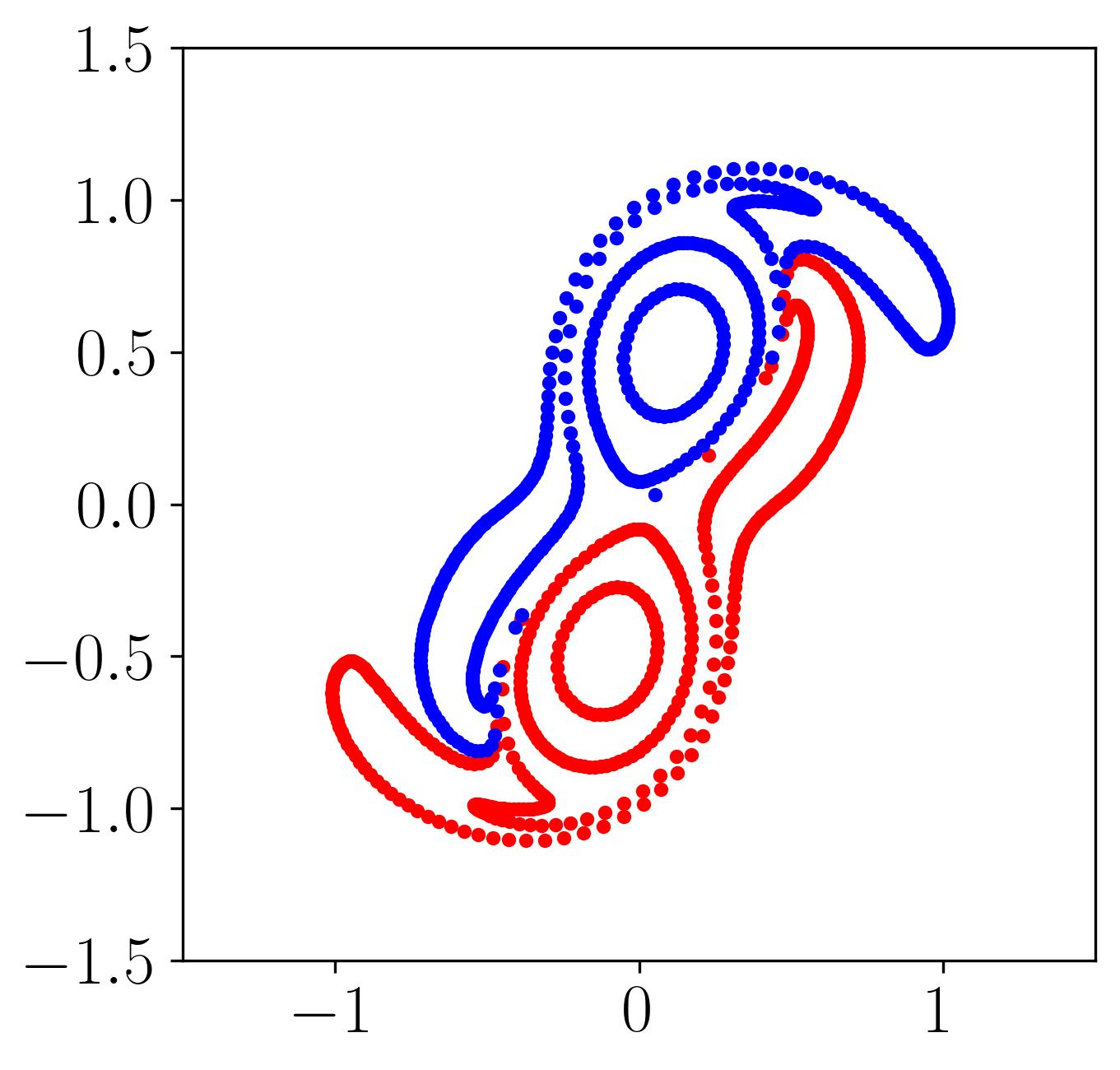}
        \includegraphics[width=0.225\linewidth]{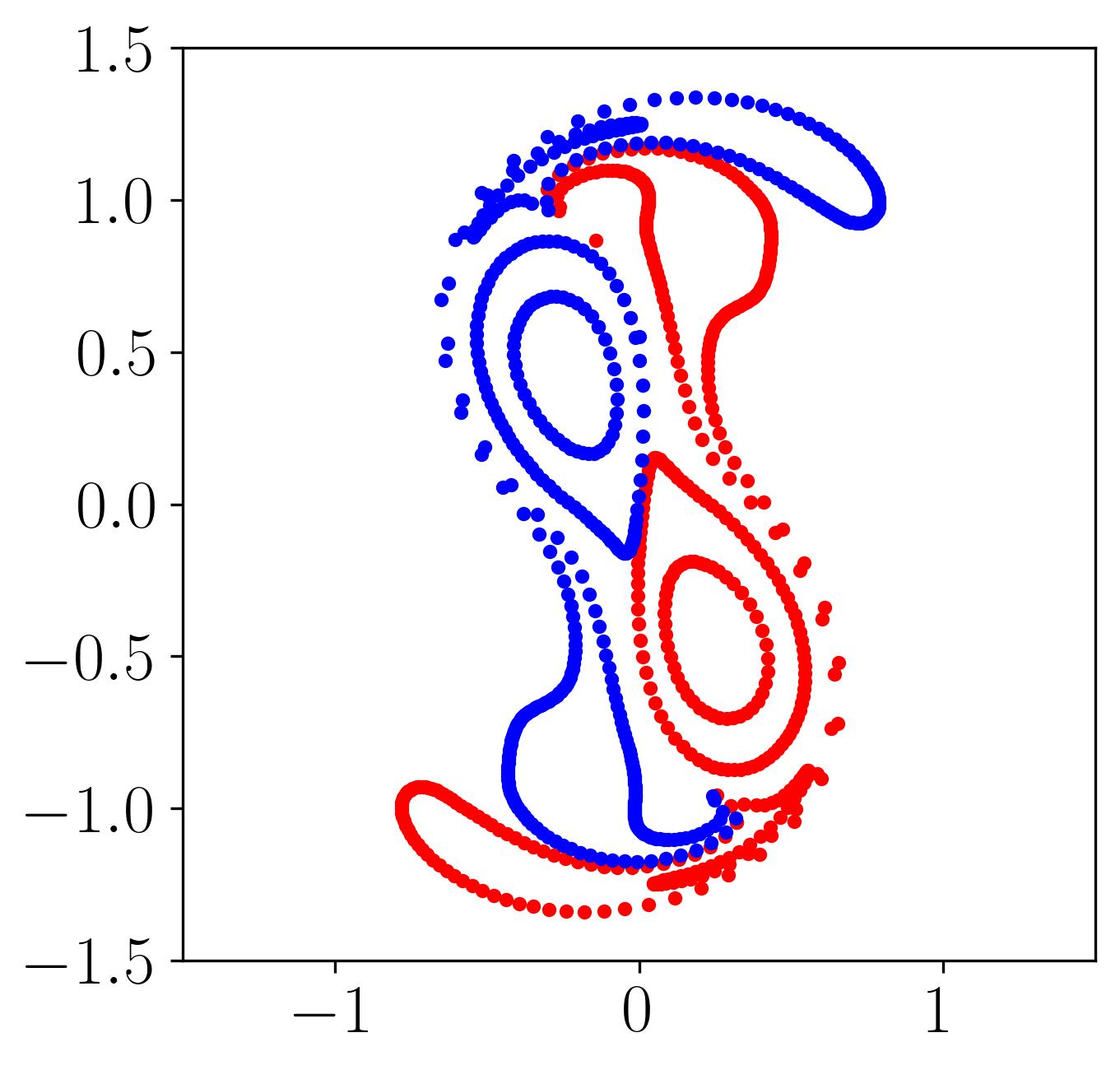} \\
        
        \caption{Reproducing a solution to the Navier-Stokes equation with a ``double bump" initial distribution. We present our results using the RM method.}
        \label{fig13}
    \end{figure}

\section{Conclusion}

A wide range of PDEs can be expressed as variants of the nonlinear Fokker-Planck equation.
While blob methods have been successfully used to develop a particle method for this PDE \cite{Burger2023, carrillo2019blobmethoddiffusion, CarrilloCraigYao2019, Craig2023,  Carrillo2024, Craig2024}, existing implementations of this approach have suffered from high computational complexity.
In this work, we compare the random batch (RB) method with a novel stochastic time discretization, called the random multirate (RM) method; both methods circumvent this computational bottleneck.
We prove that the RM method converges to the underlying ODE system at a rate of $O(k \Delta t)$, and prove by example that this rate is sharp.
We then simulate a wide range of diffusive PDEs in one and two dimensions to demonstrate the computational benefits of this method and to contrast the RM and RB methods.
In practice, we find that for ODEs arising from blob methods for diffusion equations, the RM method is even more accurate than theoretically guaranteed, outperforming the RB and forward Euler method, especially in the slow diffusion regime.

 There are several directions for future work. For example, blob methods are a natural choice for aggregation diffusion equations, due to the fact that the aggregation term already requires computation of an interacting particle system, so using a blob method to simulate the diffusion does not increase the computational complexity. In this vein, blob methods have already been implemented to simulate food seeking behavior of slime mold \cite{gottlich2020food}.
 We are optimistic that the RM method could improve the computational efficiency of this approach, while preserving accuracy.

A second direction for future work is application of this approach to develop an efficient particle method for mean-field variational inference, where 2-Wasserstein gradient flows are used to approximate a high-dimensional target measure by a product of finitely supported lower-dimensional measures \cite{jiang2025algorithmsmeanfieldvariationalinference}. When the loss function  is  a classical statistical divergence, such as the KL divergence or $\chi^2$ divergence, this boils down to solving linear or nonlinear diffusion equations for each lower-dimensional component---a context in which the RM method has already shown promising results.

\appendix

\section{Error Bounds for the RM Method}
\label{sec:error_append}
The proof of Theorem \ref{thm:error_analysis} follows the classical consistency-and-stability strategy, relying on two lemmas.
Lemma \ref{lte} establishes consistency; it bounds the error of the RM method over $k$ time steps when the initial solution is the exact ODE solution. 
Lemma \ref{propBound} establishes stability; it controls how an initial discrepancy between two RM trajectories is amplified over $k$ time steps. 
Lemma \ref{discrete_gronwall}, a discrete Gr\"onwall-type inequality, accumulates these estimates into a global bound.

First, we recall a classical discrete Gr\"onwall-type inequality.

\begin{lemma}[{c.f. Proposition 3.2 in \cite{Emmrich1999}}]
    \label{discrete_gronwall}
    Fix $k \in \N$.
    Suppose that for all $\ell \in \N$, $0 \leq \ell \leq k - 1$,
    $\varphi^{\ell + 1} \leq p \varphi^{\ell} + q $,
    where $p, q \geq 0$, $p \neq 1$.
    Then, 
    \[ \varphi^k \leq p^k \varphi^0 + q \left ( \frac{p^k - 1}{p - 1} \right ) . \]
\end{lemma}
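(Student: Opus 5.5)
We argue by induction on $\ell$. The case $\ell = 0$ is immediate, since $\varphi^0 \leq p^0 \varphi^0 + q\cdot 0$; here the empty geometric sum vanishes, and $\frac{p^0-1}{p-1} = 0$ is well defined because $p \neq 1$. The inductive claim we will carry for $0 \leq \ell \leq k$ is
\[ \varphi^{\ell} \leq p^{\ell}\varphi^0 + q\sum_{j=0}^{\ell-1} p^j = p^{\ell}\varphi^0 + q\left(\frac{p^{\ell}-1}{p-1}\right). \]

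For the inductive step, assume the bound holds for some $\ell \leq k-1$. By hypothesis, $\varphi^{\ell+1} \leq p\varphi^{\ell} + q$. Since $p \geq 0$, multiplying the inductive bound by $p$ preserves the inequality. This is the only place the sign condition on $p$ is used, and we will point it out explicitly. We then obtain
\[ \varphi^{\ell+1} \leq p^{\ell+1}\varphi^0 + q\sum_{j=1}^{\ell} p^j + q = p^{\ell+1}\varphi^0 + q\sum_{j=0}^{\ell} p^j. \]
The finite geometric sum identity $\sum_{j=0}^{\ell} p^j = \frac{p^{\ell+1}-1}{p-1}$ holds for $p\neq 1$, which closes the induction. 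Taking $\ell = k$ gives the statement.

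There is no real obstacle here. The only points to handle carefully are that multiplying the inequality requires $p\geq 0$, and that the closed form of the geometric sum requires $p \neq 1$. The condition $q \geq 0$ is not actually needed for the inequality itself. In the application to Theorem~\ref{thm:error_analysis} we use $p = e^{4L_1 k\Delta t} > 1$, so both hypotheses hold whenever $L_1 k \Delta t > 0$.
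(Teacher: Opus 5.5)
Your induction is correct: multiplying the inductive bound by $p \geq 0$ and adding $q$ closes the step, and $p \neq 1$ is needed only for the closed form of the geometric sum (you are also right that $q \geq 0$ plays no role). The paper gives no proof of its own and simply cites Proposition 3.2 of \cite{Emmrich1999}; your unrolling of the recursion is the standard argument behind that constant-coefficient result.
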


We now use the above inequality to prove the consistency and stability lemmas.
Throughout the proofs, for a given $I_{\text{fine}}$ and $I_{\text{coarse}}$, we define 
\[ \tilde{x}^m_i := L(i, m, I_{\text{fine}}, I_{\text{coarse}}) , \]
where $L$ is as defined in equation \eqref{eqn19}.
\begin{lemma}[Consistency]
    \label{lte}
     Under the assumptions of Theorem \ref{thm:error_analysis},
    \[ \max_{1 \leq i \leq N}||X_i(k \Delta t) - x_i^{k}|| \leq L_0 L_1 e^{2L_1 k \Delta t} \left ( L_1(k \Delta t)^3 + ( k \Delta t)^2 \right ) . \]
\end{lemma}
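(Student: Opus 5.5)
We split the indices into $\Ifine$ and $\Icoarse$, starting from exact data $x_i^0 = X_i(0)$, and bound the two groups separately. Throughout we use the a priori Lipschitz-in-time bound $\|X_i(t)-X_i(s)\| \le L_0|t-s|$, which follows from $\|G\|\le L_0$. Write $h=\Delta t$ and $\tau = kh$.

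\textbf{Coarse particles.} For $i\in\Icoarse$ we have $x_i^k = X_i(0) + \tau\, G(X_i(0),\bm X(0))$, so
\[
X_i(\tau)-x_i^k=\int_0^\tau \bigl[G(X_i(s),\bm X(s)) - G(X_i(0),\bm X(0))\bigr]\,ds .
\]
By the Lipschitz hypothesis and the time-Lipschitz bound, the integrand is at most $L_1L_0 s$. Hence the error is at most $L_0L_1\tau^2/2$, which is already of the claimed form.

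The interpolated coarse positions $\tilde x_j^m$, $m=\ell$, then satisfy
\[
\|\tilde x_j^{\ell} - X_j((\ell+1)h)\| \le \Bigl(1-\tfrac{\ell+1}{k}\Bigr)\,\|X_j(0)-X_j((\ell+1)h)\| + \tfrac{\ell+1}{k}\,\|x_j^k - X_j(\tau)\| + \bigl\|\text{linear interp.\ of }X_j - X_j\bigr\| .
\]
The cleanest route is to compare $\tilde x_j^\ell$ with $X_j(\ell h)$, which is the time at which the fine step evaluates $G$. Since the interpolation uses the endpoint at $(\ell+1)/k$, there is an $O(L_0 h)$ offset that we also absorb. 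Each contribution is bounded by $C(L_0\tau + L_0L_1\tau^2)$: the endpoint errors are $L_0L_1\tau^2$, and the interpolation-versus-path error is at most $2L_0\tau$ because both curves move with speed at most $L_0$. The $L_0\tau$ term from this crude bound must not be allowed to spoil the final rate, and we deal with that in the next step.

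\textbf{Fine particles.} Let $e^\ell = \max_{i\in\Ifine}\|x_i^\ell - X_i(\ell h)\|$. One fine step gives
\[
e^{\ell+1} \le e^\ell + \int_{\ell h}^{(\ell+1)h}\bigl\|G(X_i(s),\bm X(s)) - G(x_i^\ell,\tilde{\bm x}^\ell)\bigr\|\,ds ,
\]
with $\tilde{\bm x}^\ell$ taking fine components $x_j^\ell$. The integrand is at most
\[
L_1\max\bigl(e^\ell + L_0 h,\; \max_{j\in\Icoarse}\|\tilde x_j^\ell - X_j(s)\|\bigr).
\]
This yields the recursion $e^{\ell+1}\le(1+L_1h)e^\ell + h L_1 D$, where $D$ is a uniform bound on the coarse displacement error from the previous step. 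Lemma \ref{discrete_gronwall} with $p=1+L_1h$ and $q=hL_1D$ then gives
\[
e^k \le D\bigl((1+L_1h)^k-1\bigr) \le D\,L_1\tau\, e^{L_1\tau}.
\]
With $D \lesssim L_0\tau + L_0L_1\tau^2$ this produces $L_0L_1 e^{L_1\tau}(\tau^2 + L_1\tau^3)$, which matches the statement. The extra factor $L_1\tau$ from the Gr\"onwall sum converts the crude $O(L_0\tau)$ interpolation error into second order, and this resolves the concern raised above.

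\textbf{Main obstacle.} The main work is the bookkeeping of constants so that the result lands exactly on $L_0L_1e^{2L_1 k\Delta t}(L_1(k\Delta t)^3+(k\Delta t)^2)$. This means bounding the coarse and interpolation discrepancy $D$ by $L_0(\tau + L_1\tau^2)$ with coefficient at most $1$ after collecting terms, and using $(1+L_1h)^k - 1 \le L_1\tau e^{L_1\tau}$ generously, which leaves room for the $e^{2L_1\tau}$ factor. I would first try the crude bound $D\le L_0\tau + L_0L_1\tau^2/2$; if the constants fail I would track $X_j(s)$ against $\tilde x_j^\ell$ more carefully. The coarse-particle bound $L_0L_1\tau^2/2$ is dominated by the same expression, and taking the maximum over both groups finishes the proof.
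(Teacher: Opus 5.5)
Your strategy works and is organized differently from the paper's, but it leaves the explicit constant in $D$ unproved, and that is a real gap.

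\textbf{Comparison with the paper.} The paper compares each fine Euler step with an Euler step of the exact solution, paying local error $L_0L_1(\Delta t)^2$ per step. It bounds the coarse interpolants against $X_j(t_\ell)$ by the second-order quantity $L_0L_1(k\Delta t)^2$. It then applies Lemma~\ref{discrete_gronwall} with factor $1+2L_1\Delta t$ and source $L_0L_1(L_1k^2(\Delta t)^3+k(\Delta t)^2)$. You instead compare with the exact integral over $[\ell h,(\ell+1)h]$ and use the max form of the Lipschitz hypothesis to get factor $1+L_1h$. You tolerate a first-order coarse discrepancy $D$ and recover the second power of $\tau$ from $(1+L_1h)^k-1\le L_1\tau e^{L_1\tau}$.

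One advantage of your route is that it handles the look-ahead in $L$ explicitly. At fine step $\ell$, the coarse positions are Euler approximations of $X_j(t_{\ell+1})$. So the paper's estimate $\|X_j(t_\ell)-\tilde x_j^\ell\|\le L_0L_1((\ell+1)\Delta t)^2$ is really an FE estimate against $X_j(t_{\ell+1})$. Passing to $X_j(t_\ell)$ costs an extra $\|X_j(t_{\ell+1})-X_j(t_\ell)\|\le L_0\Delta t$, i.e.\ $L_0L_1(\Delta t)^2$ per step. For $k\ge 2$ this fits in the paper's slack $L_0L_1(\Delta t)^2\le L_0L_1k(\Delta t)^2$; your argument never needs that slack.

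\textbf{The gap.} Your bound reads $e^k\le D\,L_1\tau e^{L_1\tau}$, so you need $D\le L_0e^{L_1\tau}(\tau+L_1\tau^2)$. As $L_1\tau\to0$ this forces the coefficient of $L_0\tau$ in $D$ to be at most $1$. The room between $e^{L_1\tau}$ and $e^{2L_1\tau}$ cannot absorb a constant factor.

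The triangle inequality you wrote does not give this. Its first term $(1-\tfrac{\ell+1}{k})\|X_j(0)-X_j((\ell+1)h)\|$ is superfluous: with $\lambda=\tfrac{\ell+1}{k}$, the exact splitting
\[
\tilde x_j^\ell-X_j(\lambda\tau)=\lambda\bigl(x_j^k-X_j(\tau)\bigr)+\bigl[(1-\lambda)X_j(0)+\lambda X_j(\tau)-X_j(\lambda\tau)\bigr]
\]
has only two pieces. Moreover, your crude $2L_0\tau$ for the interpolation error already makes the leading coefficient at least $2$. So ``$D\lesssim L_0\tau+L_0L_1\tau^2$ matches the statement'' does not follow as written.

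\textbf{The fix.} Because $x_j^0=X_j(0)$, the interpolant is itself an Euler step: $\tilde x_j^\ell=X_j(0)+(\ell+1)h\,G_0$ with $G_0=G(X_j(0),\bm X(0))$. Hence for $s\in[\ell h,(\ell+1)h]$,
\[
\tilde x_j^\ell-X_j(s)=\bigl((\ell+1)h-s\bigr)G_0-\int_0^s\bigl[G(X_j(r),\bm X(r))-G_0\bigr]\,dr,
\]
so $\|\tilde x_j^\ell-X_j(s)\|\le L_0h+\tfrac12L_0L_1\tau^2=:D$. The bound you planned to try first is therefore true, but it comes from this identity, not from your decomposition.

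This $D$ also satisfies $D\ge L_0h$. Your recursion tacitly needs that to write $\max\bigl(e^\ell+L_0h,\,D\bigr)\le e^\ell+D$, so state it. You then get $e^k\le L_0L_1e^{L_1\tau}\bigl(h\tau+\tfrac12L_1\tau^3\bigr)$. Together with the coarse bound $\tfrac12L_0L_1\tau^2$, this lies within the claimed estimate.
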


\begin{proof}[Proof of Lemma \ref{lte}]
    For ease of notation, for $0 \leq \ell \leq k$, set $t_{\ell} = \ell \Delta t$ and define 
    \begin{equation*}
        E^{{\ell}}_j = ||X_j(t_{{\ell}}) - x_j^{{\ell}}||, \; \; \; 
        E^{{\ell}}_{\text{coarse}} = \max_{j \in \Icoarse} E_j^{{\ell}}, \; \; \; 
        E^{{\ell}}_{\text{fine}} = \max_{j \in \Ifine} E_j^{{\ell}} . 
    \end{equation*}
    By error estimates for the forward Euler method, $E^{k}_{\text{coarse}} \leq L_0 L_1 (k \Delta t)^2$.
    For $i \in \Ifine$, we seek a bound for $E^{{\ell} + 1}_{\text{fine}}$ in terms of $E^{{\ell}}_{\text{fine}}$, which allows us to invoke Lemma \ref{discrete_gronwall} to obtain the desired result.
    Note that, for $0 \leq \ell \leq k - 1$,
    \begin{align*}
        E_i^{{\ell} + 1}
        & = ||X_i(t_{{\ell} + 1}) - x_i^{{\ell} + 1}|| \\
        & \leq || X_i(t_{{\ell}}) + \Delta t G(X_i(t_{{\ell}}), \bm{X}(t_{\ell})) -  (x_i^{\ell} + \Delta tG(x_i^{\ell}, \bm{\tilde{x}}^{\ell})) || + L_0 L_1 (\Delta t)^2 \\
        & \leq E_i^{\ell} + \Delta t ||G(X_i(t_{\ell}), \bm{X}(t_{\ell})) - G(x_i^{\ell}, \bm{\tilde{x}}^{\ell}))|| + L_0 L_1 (\Delta t)^2 \\
        & \leq E_i^{\ell} + \Delta t L_1\left ( E_i^{\ell} + \max_{1 \leq j \leq N} ||X_{j}(t_{\ell}) - \tilde{x}_j^{\ell}|| \right ) + L_0 L_1 ( \Delta t)^2 . 
    \end{align*}
    The second line follows by error estimates of the FE method, the third by the triangle inequality, and the fourth by the Lipschitz assumption on the function $G$.
    Let us consider the maximum in the final line of the above system.
    On the one hand, if $j \in \Ifine$, $\tilde{x}_j^{\ell} = x^{\ell}_{j}$, thus $||X_{j}(t_{\ell}) - \tilde{x}_j^{\ell}|| = E^{\ell}_{j} \leq E^{\ell}_{\text{fine}}$.
    On the other hand, if $j \in \Icoarse$, $||X_{j}(t_{\ell}) - \tilde{x}_j^{\ell}|| \leq L_0 L_1 ((\ell + 1) \Delta t)^2 \leq L_0 L_1 (k \Delta t)^2$, by error estimates for the FE method and convexity. 
    In sum, for $0 \leq \ell \leq k - 1$, we have the bound
    \begin{align*}
        E^{{\ell} + 1}_{\text{fine}}
        & \leq E^{\ell}_{\text{fine}} + \Delta t L_1\left ( 2E^{\ell}_{\text{fine}} + L_0 L_1 (k \Delta t)^2 \right ) + L_0 L_1 ( \Delta t)^2 \\
        & \leq (1 + 2 \Delta tL_1)E^{\ell}_{\text{fine}} + L_0 L_1 \left ( L_1 k^2 (\Delta t)^3 + k ( \Delta t)^2 \right ) .
    \end{align*}
    By Lemma \ref{discrete_gronwall},
    $ E^{k}_{\text{fine}} 
        \leq L_0 L_1 \left ( L_1 k^2 (\Delta t)^3 + k ( \Delta t)^2 \right ) \left ( \frac{(1 + 2L_1 \Delta t)^k - 1} {2 \Delta t L_1} \right ) . 
    $
    Notice that 
    \begin{align*}
       \frac{(1 + 2L_1 \Delta t)^k - 1} {2 \Delta t L_1}
       \leq \frac{e^{2 L_1 k \Delta t} - 1}{2 \Delta t L_1} 
       \leq k e^{2 L_1 k \Delta t}. 
    \end{align*}
    In sum, 
    $ E^{k}_{\text{fine}} \leq L_0 L_1 e^{2L_1 k \Delta t} \left ( L_1(k \Delta t)^3 + ( k \Delta t)^2 \right ) . $
    Considering the bounds for $E^{k}_{\text{fine}}$ and $E^{k}_{\text{coarse}}$ gives the result.
\end{proof}

\begin{lemma}[Stability]
    \label{propBound}
   Under the assumptions of Theorem \ref{thm:error_analysis} and   for $0 \leq {\ell} \leq k$, let $\bm{y}^{\ell}, \bm{x}^{\ell}$ be   solutions to the RM method with $F_{RM} = G$ after ${\ell}$ time steps with initial values $\bm{y}^0, \bm{x}^0 \in \R^{dN}$,   where we select the same $\Ifine$ and $\Icoarse$ for both.
    Then,
    \begin{align*}
        \max_{1 \leq i \leq N}||y_i^{k} - x_i^{k}|| \leq e^{4L_1 k \Delta t} \max_{1 \leq i \leq N}||y_i^{0} - x_i^{0}|| .
    \end{align*}
\end{lemma}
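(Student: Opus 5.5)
We split the particles into the coarse and fine groups and track the two differences separately. Set
\[ D^\ell_{\text{coarse}} = \max_{i \in \Icoarse} ||y_i^\ell - x_i^\ell||, \qquad D^\ell_{\text{fine}} = \max_{i \in \Ifine} ||y_i^\ell - x_i^\ell||, \qquad D^0 = \max_{1\le i\le N} ||y_i^0 - x_i^0||. \]
The strategy is to bound the coarse group directly, since it takes a single step. We then derive a one-step recursion for the fine group and close it with Lemma \ref{discrete_gronwall}, mirroring the proof of Lemma \ref{lte}.

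\textbf{Coarse particles.} For $i \in \Icoarse$ we have
\[ y_i^k - x_i^k = y_i^0 - x_i^0 + k\Delta t\bigl(G(y_i^0,\bm{y}^0) - G(x_i^0,\bm{x}^0)\bigr). \]
The Lipschitz hypothesis then gives $D^k_{\text{coarse}} \le (1 + L_1 k\Delta t) D^0 \le e^{L_1 k \Delta t} D^0$.

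\textbf{Interpolated coarse positions.} At fine step $\ell$, the coarse positions are the convex combinations
\[ \tilde{y}_j^\ell = \Bigl(1 - \tfrac{\ell+1}{k}\Bigr) y_j^0 + \tfrac{\ell+1}{k} y_j^k, \]
and similarly for $\tilde{x}_j^\ell$. By convexity of the norm,
\[ ||\tilde{y}^\ell_j - \tilde{x}^\ell_j|| \le \max\bigl(D^0, D^k_{\text{coarse}}\bigr) \le (1 + L_1 k \Delta t) D^0 \quad \text{for } j \in \Icoarse. \]
For $j \in \Ifine$ we have $\tilde{x}_j^\ell = x_j^\ell$, so the analogous difference is at most $D^\ell_{\text{fine}}$.

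\textbf{Fine-particle recursion.} For $i \in \Ifine$,
\[ ||y_i^{\ell+1} - x_i^{\ell+1}|| \le ||y_i^\ell - x_i^\ell|| + \Delta t L_1 \max\Bigl(||y_i^\ell - x_i^\ell||, \max_j ||\tilde{y}_j^\ell - \tilde{x}_j^\ell||\Bigr). \]
Bounding the max by a sum, as in Lemma \ref{lte}, yields
\[ D^{\ell+1}_{\text{fine}} \le (1 + 2L_1 \Delta t) D^\ell_{\text{fine}} + L_1 \Delta t (1 + L_1 k\Delta t) D^0 . \]
Applying Lemma \ref{discrete_gronwall} with $p = 1 + 2L_1\Delta t$ and $D^0_{\text{fine}} \le D^0$ gives
\[ D^k_{\text{fine}} \le \Bigl[ e^{2L_1 k\Delta t} + (1 + L_1 k \Delta t)\tfrac{e^{2L_1 k\Delta t} - 1}{2} \Bigr] D^0 . \]
This uses $(1+2L_1\Delta t)^k \le e^{2L_1 k \Delta t}$ and the factor $\frac{p^k-1}{p-1}$ multiplied by $L_1\Delta t$.

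\textbf{Final bound.} It remains to show that the bracket is at most $e^{4L_1 k\Delta t}$. Write $a = L_1 k \Delta t$ and use $1 + a \le e^{a}$. The bracket is then bounded by
\[ e^{2a} + \tfrac12 e^{a}(e^{2a} - 1) = e^{2a} + \tfrac12 e^{3a} - \tfrac12 e^{a}. \]
This is at most $e^{4a}$, since $e^{4a} - e^{2a} \ge e^{3a} - e^{a} \ge \tfrac12(e^{3a} - e^a)$; the first inequality holds because $e^{4a}-e^{2a} = e^{a}(e^{3a}-e^{a})$ with $e^a \ge 1$. Taking the max with the coarse bound $e^{a} D^0 \le e^{4a} D^0$ then gives the claim.

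The only real bookkeeping obstacle is the interpolated coarse positions. They depend on the endpoint $y^k$, so the coarse step has to be resolved before the fine recursion is set up. Once that bound is in hand, the rest is routine exponential inequalities.
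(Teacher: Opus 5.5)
Your proposal is correct and follows the paper's proof essentially step for step. Both arguments use the same ingredients: the one-step coarse bound $(1+L_1k\Delta t)$, convexity for the interpolated coarse positions, and the fine recursion with factor $(1+2L_1\Delta t)$ closed by Lemma~\ref{discrete_gronwall}. The only difference is cosmetic and lies in the final exponential inequality: you keep the factor $\tfrac12$ and use $1+a\le e^{a}$, whereas the paper drops the $\tfrac12$ and bounds $\beta\le e^{2a}$ to get the identity $e^{2a}+e^{2a}(e^{2a}-1)=e^{4a}$.
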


\begin{proof}[Proof of Lemma \ref{propBound}]
    Similar to the proof of Lemma \ref{lte}, define 
    \begin{equation*}
        H^{{\ell}}_i = ||y_i^{\ell} - x_i^{{\ell}}||,  \; \;  
        H^{{\ell}}_{\text{coarse}} = \max_{i \in \Icoarse} H_i^{{\ell}}, \; \;   H^{{\ell}}_{\text{fine}} = \max_{i \in \Ifine} H_i^{{\ell}}  .
    \end{equation*}
    We also define $H^\ell =  \max \left ( H^{{\ell}}_{\text{coarse}} ,H^{{\ell}}_{\text{fine}}  \right)$.
    First, consider the case $i \in \Icoarse$.
    Note that 
    \begin{align*}
        \label{ineq1}
        H_i^{k}
        & = ||y_i^0 + k\Delta t G(y_i^0, \bm{y}^0) - (x_i^0 + k\Delta tG(x_i^0, \bm{x}^0))||   \\
        & \leq ||y_i^0 - x_i^0|| + k \Delta t ||G(y_i^0, \bm{y}^0) - G(x_i^0, \bm{x}^0)|| \\
        & \leq H_i^0 +L_1 k \Delta t H^0  .
    \end{align*}
    Taking the maximum over all $i \in \Icoarse$, we obtain
    \begin{equation}
        \label{ineq2}
        H^k_{\text{coarse}}  \leq (1 + L_1k \Delta t) H^0 .
    \end{equation}
    
    Now, we consider the case $i \in \Ifine$.
  For $0 \leq \ell \leq k - 1$,
    \begin{align*}
        H_i^{{\ell} + 1}
        & = ||y_i^{\ell} + \Delta t G(y_i^{\ell}, \bm{\tilde{y}}^{\ell}) - \left ( x_i^{\ell} + \Delta t G(x_i^{\ell}, \bm{\tilde{x}}^{\ell}) \right ) || \\
        & \leq H_{i}^{\ell} + \Delta t ||G(y_i^{\ell}, \bm{\tilde{y}}^{\ell}) - G(x_i^{\ell}, \bm{\tilde{x}}^{\ell})|| \\
        & \leq H_i^{\ell} + L_1\Delta t \left (H_{i}^{\ell} +  \max_{1 \leq j \leq N} ||\tilde{y}_{j}^{\ell} - \tilde{x}_{j}^{\ell} || \right ) . 
    \end{align*}
    Let us consider the maximum in the final line of the above system.
    On the one hand, if $j \in \Ifine$, $||\tilde{y}_{j}^{\ell} - \tilde{x}_{j}^{\ell} || = ||y_{j}^{\ell} - x_{j}^{\ell} || \leq H^{\ell}_{\text{fine}}$.
    On the other hand, if $j \in \Icoarse$, 
    \begin{align*}
        ||\tilde{y}_{j}^{\ell} - \tilde{x}_{j}^{\ell} ||
        & \leq \left ( 1 - \frac{\ell + 1}{k} \right ) ||y_{j}^0 - x_{j}^0 || + \frac{\ell + 1}{k} ||y_{j}^{k} - x_{j}^{k} || \\
        & \leq \left ( 1 - \frac{\ell + 1}{k} \right ) H^0 + \frac{\ell + 1}{k} (1 + L_1 k \Delta t) H^0  \\
        & \leq (1 + L_1 k \Delta t) H^0 . 
    \end{align*}
    In sum, we obtain
    \begin{align*}
        H^{{\ell} + 1}_{\text{fine}}
        & \leq H^{{\ell}}_{\text{fine}} + L_1 \Delta t \left ( 2 H^{\ell}_{\text{fine}} + (1 + L_1 k \Delta t) H^0 \right ) \\
        & \leq (1 + 2L_1 \Delta t) H^{\ell}_{\text{fine}} + (1 + L_1 k \Delta t) L_1 \Delta tH^0 .
    \end{align*}
    Define $\alpha = (1 + 2 L_1 \Delta t)$, $\beta = (1 + L_1 k \Delta t) $.
    By Lemma \ref{discrete_gronwall}, we obtain
    \begin{align*}
        H^k_{\text{fine}} 
        & \leq \alpha^k H^0_{\text{fine}}  + \beta L_1 \Delta tH^0 \left ( \frac{\alpha^k - 1}{2L_1 \Delta t} \right ) \leq H^0 \left ( \alpha^k + \beta (\alpha^k - 1) \right )  . 
    \end{align*}
    Comparing this inequality with inequality \eqref{ineq2}, we observe that the bound for $H^{k}_{\text{fine}}$ is greater than the bound for  $H^{k}_{\text{coarse}}$ by Bernoulli's inequality.
    In sum, we have
    \begin{equation}
        \label{eqn14}
        \max_{1 \leq i \leq N}||y_i^{k} - x_i^{k}|| \leq \left ( \alpha^{k} + \beta(\alpha^k - 1) \right ) \max_{1 \leq i \leq N}||y_i^{0} - x_i^{0}|| .
    \end{equation}
  Likewise, we have $\alpha^k  \leq e^{2L_1 k \Delta t}$ and $\beta \leq e^{2L_1 k \Delta t}$.
    As a result,
    \begin{align*}
        \alpha^{k} + \beta(\alpha^k - 1)
        \leq e^{2L_1 k \Delta t} + e^{2L_1 k \Delta t} (e^{2L_1 k \Delta t} - 1)
        = e^{4L_1 k \Delta t}. 
    \end{align*}
    Along with inequality \eqref{eqn14}, this gives the result.    
\end{proof}

We also include the proof of Lemma \ref{lm:relax_assumptions}, which allows us to relax the boundedness and Lipschitz conditions on the function $G$.

\begin{proof}[Proof of Lemma \ref{lm:relax_assumptions}]
    First we will show $G$ is locally Lipschitz.
    Since sums and products of locally Lipschitz functions are locally Lipschitz, we only need to argue that $f'' \circ h$ is locally Lipschitz, where $h: \R^{d} \times \R^{Nd} \rightarrow (0, \infty)$ is given by 
    $ h(x, \textbf{u}) = \sum_{j = 1}^N m_j \varphi_{\eps}(x - u_j) . $ 
    Fix $(x, \mathbf{u}) \in \R^d \times \R^{Nd}$ and let $U \subseteq (0, \infty)$ be an open neighborhood of $h(x, \mathbf{u}) \in (0, \infty)$ on which $f''$ is Lipschitz.
    Since $h$ is continuous and locally Lipschitz, we can find an open neighborhood $V$ of $(x, \mathbf{u})$ such that $h(V) \subseteq U$ and $h$ is Lipschitz on $V$.
    Note that $f'' \circ h$ is Lipschitz on $V$.
    Thus $f'' \circ h$ is locally Lipschitz.
    
    Let $\chi: \R^d \rightarrow [0, 1]$ be a smooth function with $\chi = 1$ on $B(0, R)$, $\chi = 0$ outside $B(0, R + 1)$.
    Define 
    \[ \tilde{G}(x, \textbf{u}) = \chi(x) \prod_{i = 1}^N \chi(u_i) G(x, \mathbf{u}) . \]
    By construction, $\tilde{G}$ is Lipschitz, since it is locally Lipschitz with compact support.
    Furthermore $\tilde{G}$ is bounded, since it is continuous with compact support.
    Note that by construction, we may replace $G$ with $\tilde{G}$ throughout our arguments, with no change to the conclusion.
\end{proof}

\section{Notes on Exact Continuum Behavior of Equation \eqref{eqn:PDE}}
    \label{sec:exact}

    In this section, we provide known exact solutions for equation $\eqref{eqn:PDE}$ for certain choices of $f$ and $v$.
    First, consider fast and slow diffusion equations, where $f$ is given by equation $\eqref{eqn1}$ and $v = 0$.
    We study this equation in the top row of Figure \ref{fig2} in Section \ref{sec:diffusion_exact} and in Figure \ref{fig10} in Section \ref{sec:2d_diffusion}.
    Consider the following Gaussian ($m = 1$) or Barenblatt ($m \neq 1$) function:
    \begin{equation}
        \psi(t, x) = 
        \begin{cases}
            \frac{1}{(4 \pi t)^{\frac{d}{2}}} e^{- |x|^2 / 4 t} & m = 1 \\
            t^{- d\beta} (K - \kappa t^{- 2 \beta} |x|^2)_+^{1 / (m - 1)} & m \neq 1. 
        \end{cases} 
        \label{eqn:psi}
    \end{equation} 
    Here, 
    $ \beta = \frac{1}{2 + d(m - 1)} \quad \text{ and } \quad \kappa = \frac{\beta}{2} \left ( \frac{m - 1}{m}\right ) $. 
    $K$ is chosen so that for all times $t$, $\int \psi(t, x) dx = 1$.
    If $\rho_0(x) = \psi(\tau, x)$ for some $\tau > 0$, the exact solution to equation \eqref{eqn:PDE} is
    $\rho(t, x) = \psi(t + \tau, x) $ . 
    In all simulations using this choice of $f$ and $v$, we fix $\tau$ so that $\rho_0(0) = 1$.

    Now consider fast and slow diffusion equations with a quadratic external potential; specifically, suppose $f$ is given by equation \eqref{eqn1} and $v = \nabla V$ for $V(x) = \frac{\beta |x|^2}{2}$.
    We consider this choice of $f$ and $v$ in Sections \ref{sec:batch_size}, \ref{sec:diffusion_type}, \ref{sec:diffusion_exact}, \ref{sec:diffusion_longtime}, and \ref{sec:convergence_rate}.
    Consider the following function:
    \begin{equation}
        \theta(t, x) = e^{ t d \beta} \psi(e^{t} + \tau, xe^{ t\beta}) . 
        \label{eqn:init_dist_3}
    \end{equation}
    If $\rho_0(x) = \theta(\sigma, x)$ for $\sigma \in \R$, the solution to equation \eqref{eqn:PDE} is $\rho(t, x) = \theta(t + \sigma, x)$.
    In all simulations using this choice of $f$ and $v$, we set $\tau = 0.0625$ and choose $\sigma$ so that $\theta(\sigma, 0) = 0.8\rho_{\infty}(0)$, where $\rho_{\infty}$ is the longtime behavior of equation $\eqref{eqn:PDE}$.

    Again, consider fast and slow diffusion equations, i.e. $f$ is given by equation \eqref{eqn1}, but now suppose $v = \nabla V$, where $V: \R^d \rightarrow \R$ is \emph{any} continuously differentiable function that is $\lambda$-convex for $\lambda > 0$.
    In Sections \ref{sec:diffusion_longtime}, \ref{sec:convergence_rate}, \ref{sec:height_constraint}, and $\ref{sec:2d_height_constraint}$, we are interested in the longtime solution to equation \eqref{eqn:PDE}, which is 
        \begin{equation}
            \rho_{\infty}(x) = \begin{cases}
                e^{Z - V(x)} & m = 1 \\
                \left(\frac{Z - V(x)}{m'}\right)_+^{m' - 1} & m \neq 1 .
            \end{cases}
            \label{eqn6}
        \end{equation}
    $Z$ is a constant chosen so that $\int \rho_{\infty}(x) dx = 1$ and $m'$ is the H\"older conjugate of $m$.

    Next, consider sandpile dynamics, where $f$ as given in equation $\eqref{eqn18}$, as discussed in Section \ref{sec:sandpile}.
    We define
    \[ \gamma(t, x) = \mathbbm{1}_{[L_1(t), L_2(t)]} + \frac{e^{- \frac{x^2}{4t^2}}}{(4\pi t^2)^{\frac{1}{2}}}\mathbbm{1}_{[L_2(t), R_1(t)]} + \mathbbm{1}_{[R_1(t), R_2(t)]}. \]
    The functions $L_1(t), L_2(t), R_1(t)$, and $R_2(t)$ are chosen so that $\gamma$ is symmetric about the $y$-axis and mass remains equal to one.
    If the initial solution to the PDE is given by $\rho_0(x) = \gamma(\tau, x)$, then the exact solution at time $t$ is given by $\rho(t, x) = \gamma(t + \tau, x)$. 
    In all simulations using this choice of $f$, we set $\tau = 0.1$.

\section{The Choice of Mollifier}

    \label{sec:moll}

    In all the simulations above, we use a Gaussian mollifier in the ODE system given by system \eqref{eqn:xdot}.
    In this section, we consider different mollifiers.
    For example, consider a variant of the function discussed by Carrillo et al. in \cite{Carrillo2024}:
    \begin{equation}
        \tag{\text{CESW}}
        \eta_2(x) = C_{\alpha}^1 \left (1 + \left |\frac{x}{C_{\alpha}^2} \right |^2 \right )^{-\alpha} .
    \end{equation}
    $C_{\alpha}^1$ and $C_{\alpha}^2$ are constants chosen so that $\varphi_1$ integrates to one and its second moment also equals one. 
    An important element in the analysis of Carrillo et al. on the convergence of the blob method to the continuum PDE \eqref{eqn:PDE} was the assumption $\alpha > \frac{d}{2} + \frac{1}{m}$.
    We also consider a mollifier with compact support \cite{Brezis2011}:
    \begin{equation}
        \tag{\text{Cpt.}}
        \label{eqn7}
        \eta_3(x) = \begin{cases}
            C_1 \exp \left ( \frac{1}{ \left | \frac{x}{C_2} \right |^2 - 1 } \right ) & \left | \frac{x}{C_2} \right | < 1 \\
            0 & \left | \frac{x}{C_2} \right | \geq 1 . 
        \end{cases}
    \end{equation}
    Again, $C_1$ is a constant chosen so $\varphi^2$ integrates to one, and $C_2$ is a constant chosen so that $\int x^2 \varphi^2(x) dx = 1 $.
    For each $\eta_i$, we define the mollifier $\varphi_{\varepsilon} = (\frac{1}{\varepsilon})^d \eta_i \left ( \frac{x}{\varepsilon} \right )$.
    
    In Figure \ref{fig20}, we compare the performance of each of these mollifiers.
    We seek to solve equation \eqref{eqn:PDE} in the case $f$ is given by equation \eqref{eqn1} and $v = 0$.
    We consider the case $m = \frac{3}{4}$ and $m = 2$.
    For $h \in \{ 0.005, 0.01, 0.02, 0.04 \}$, we iterate through each of our mollifiers, calculate the particle solution, and measure the error to the continuum PDE.
    In all cases, we calculate the particle solution using the FE method with time step $\Delta t = 10^{-5}$ up to final time $T = 0.1$. 
    Notice that for both choices of $m$, the Gaussian mollifier displays the most accuracy. 
    On the right-hand panel, the Gaussian and CESW mollifier with $\alpha = 20$ have almost the same amount of accuracy; their lines overlap each other almost entirely.
    
    \begin{figure}
        \centering
        \includegraphics[width=0.395\linewidth]{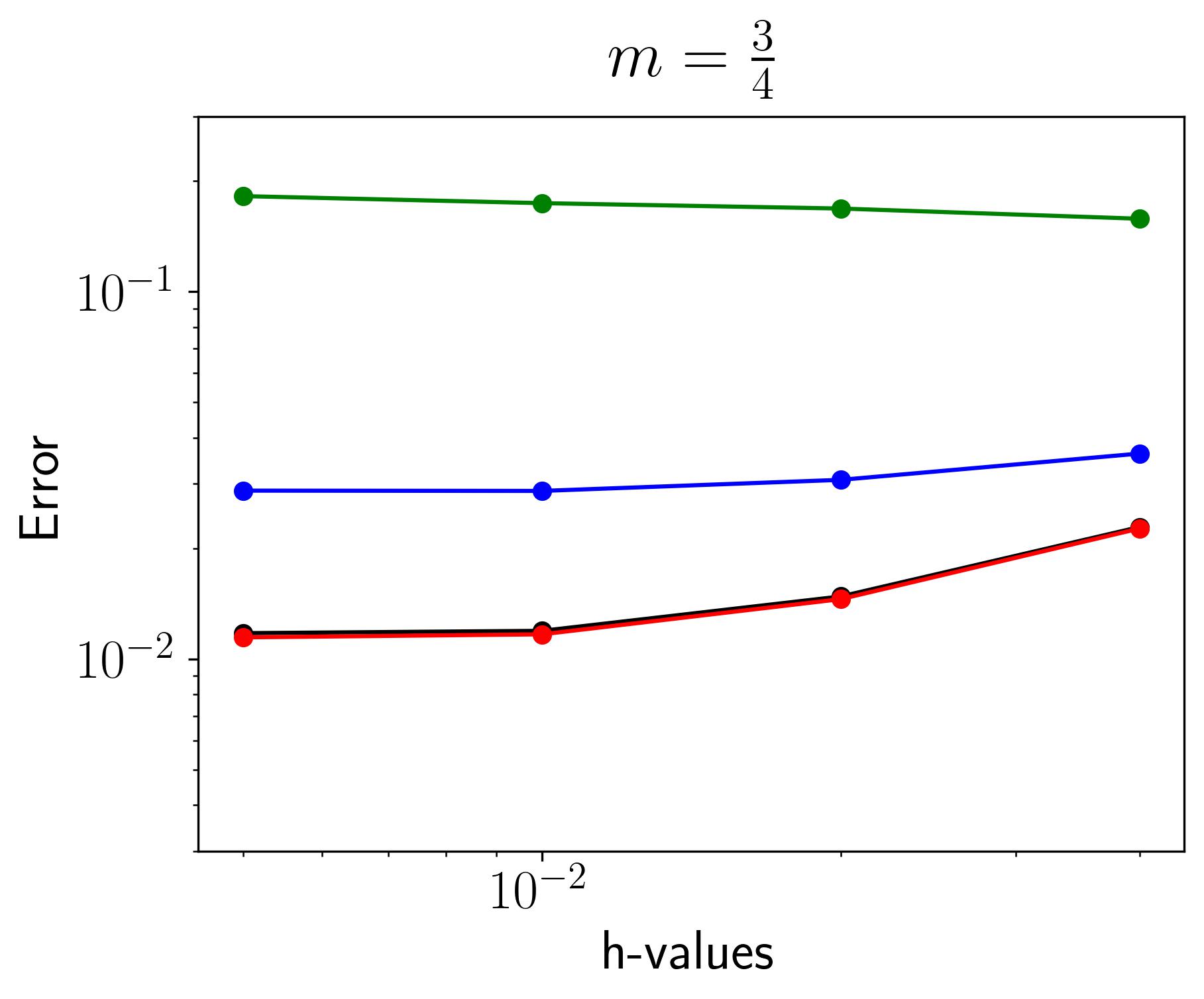}
        \includegraphics[width=0.52\linewidth]{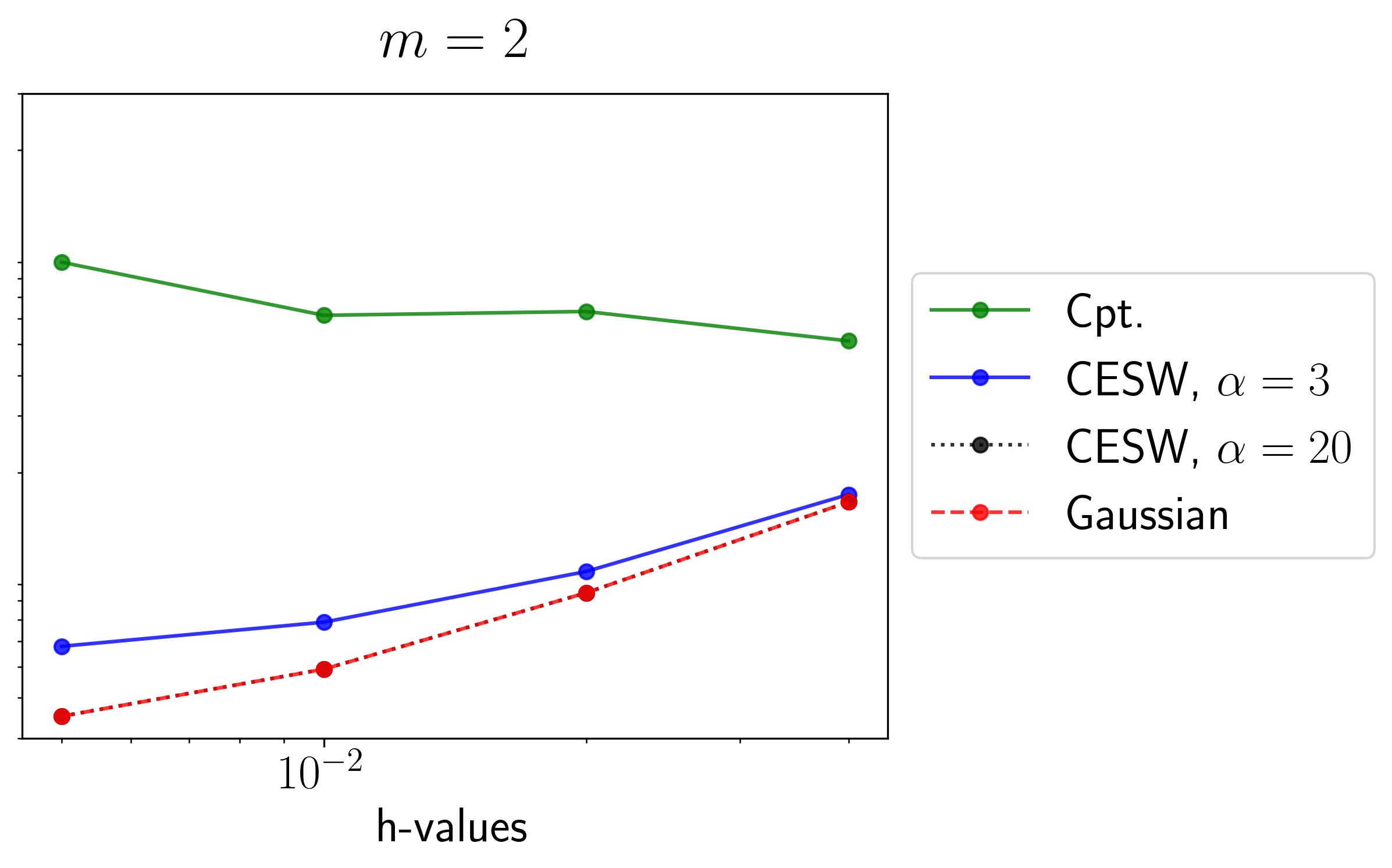}
        \caption{Error between exact and numerical solutions for different choices of mollifiers $\varphi_{\eps}$ in system \eqref{eqn:xdot}. We consider nonlinear diffusion with no external velocity for two choices of $m$.}
        \label{fig20}
    \end{figure}

\section*{Acknowledgments} The work of K. Craig and C. Murphy has been supported
in part by NSF DMS grant 2145900. 
The authors would like to thank Hector Ceniceros, Matt Jacobs, Qin Li, and Olga Turanova for their valuable feedback on this project. 
The second author is grateful to Jack Pfaffinger for many illuminating conversations that helped shape this work. 
The authors used Claude, Gemini, and ChatGPT to conduct literature reviews, formulate theorems and proofs, write code, and prepare citations. The authors assume responsibility for all content.

\newpage
\bibliographystyle{siamplain}
\bibliography{references}

\end{document}